\documentclass[12pt]{article}
\usepackage{amssymb}
\usepackage{mathrsfs}
\usepackage{amsfonts}
\usepackage{graphicx}
\usepackage{mathptmx}

\usepackage{latexsym,amsmath,amssymb,amsfonts,amsthm}

\synctex=1
\newcommand{\zb}{{\bar{z}}}
\newcommand{\pz}{\frac{\partial\ }{\partial z}}
\newcommand{\pzb}{\frac{\partial\ }{\partial \overline{z}}}
\newcommand{\R}{\mathbb{R}}
\newcommand{\h}{\mathbb{H}}
\newcommand{\s}{\mathbb{S}}

\newcommand{\B}{\mathbb{B}}

\newcommand{\C}{\mathbb{C}}
\newcommand{\N}{\mathbb{N}}

\newcommand{\meta}[2]{\langle #1,#2 \rangle }

\newcommand{\boP}{\mathcal{P}}

\newcommand{\boC}{\mathcal{C}}
\newcommand{\eps}{\epsilon}

\newcommand{\lan}{\langle}
\newcommand{\ran}{\rangle}

\DeclareMathOperator{\Tr}{Tr}


\newcommand{\Ome}{\Omega}


\newtheorem{theorem}{Theorem}[section]
\newtheorem{lemma}[theorem]{Lemma}

\newtheorem{corollary}[theorem]{Corollary}
\newtheorem{proposition}[theorem]{Proposition}
\newtheorem{remark}[theorem]{Remark}

\newtheorem{defn}[theorem]{Definition}

\newtheorem{claim}{Claim}

\newtheorem{theorema}{Theorem}

\setlength{\textwidth}{165mm} \setlength{\textheight}{230mm}
\setlength{\oddsidemargin}{0mm} \setlength{\topmargin}{-.3in}
\pagestyle{myheadings}

\begin{document}
\setcounter{page}{1}
\title{Characterization of $f$-extremal disks}
\author{Jos\'{e} M. Espinar$^{\dag}$, Laurent Mazet$^{*}$}
\date{}
\maketitle ~~~\\[-15mm]

\begin{center}
{\footnotesize $^{\dag}$Instituto Nacional de Matem\'{a}tica Pura e Aplicada, 110 Estrada
Dona Castorina, Rio de Janeiro, 22460-320, Brazil \\
Email: jespinar@impa.br\\}
{\footnotesize $^{*}$Universit\'e Paris-Est, Laboratoire d'Analyse et Math\'{e}matiques
Appliqu\'{e}es (UMR 8050), UPEM, UPEC, CNRS, F-94010, Cr\'{e}teil, France \\
Email: laurent.mazet@math.cnrs.fr\\}
\end{center}


\begin{abstract}
We show uniqueness for overdetermined elliptic problems defined on topological disks
$\Ome$ with $C^2$ boundary, \textit{i.e.}, positive solutions $u$ to
$\Delta u + f(u)=0$ in $\Omega \subset (M^2,g)$ so that $u = 0$ and $\frac{\partial
u}{\partial \vec\eta} = cte $ along $\partial \Omega$, $\vec\eta$ the unit outward normal along $\partial\Ome$ under the assumption of the existence of a candidate family. To do so, we adapt the G\'alvez-Mira generalized Hopf-type Theorem~\cite{GM} to the realm of overdetermined elliptic problem.

When $(M^2,g)$ is the standard sphere $\s^2$ and $f$ is a $C^1$ function so that $f(x)>0$ and $f(x)\ge xf'(x)$ for any $x\in\R_+^*$, we construct such candidate family considering rotationally symmetric solutions. This proves the Berestycki-Caffarelli-Nirenberg conjecture in $\s^2$ for this choice of $f$. More precisely, this shows that if $u$ is a positive solution to $\Delta u + f(u) = 0$ on a topological disk $\Omega \subset \s^2$ with $C^2$ boundary so that $u = 0$ and $\frac{\partial u}{\partial \vec\eta} =
cte $ along $\partial \Omega$, then $\Omega$ must be a geodesic disk and $u$ is rotationally symmetric. In particular, this gives a positive answer to the Schiffer conjecture D (cf. \cite{Sch,Shk}) for the first Dirichlet eigenvalue and classifies simply-connected harmonic domains (cf. \cite{RauSav}, also called {\it Serrin Problem}) in $\s ^2$.
\end{abstract}

\mbox{}\\
{\bf MSC 2010:}  35Nxx; 53Cxx. 
\\
{\bf Key Words:} Overdetermined Problems; Maximum principle; Neumann conditions; Index Theorem.

\markright{\sl\hfill  J.M. Espinar, L. Mazet  \hfill}


\section{Introduction}
\renewcommand{\thesection}{\arabic{section}}
\renewcommand{\theequation}{\thesection.\arabic{equation}}
\setcounter{equation}{0} \setcounter{maintheorem}{0}

Overdetermined elliptic problems (OEP), i.e., finding a solution to an elliptic partial
derivative equation constrained to both Dirichlet and Neumann conditions, appear
frequently in physical models and free boundary problems. 

Let $\Omega $ be an open connected domain of a complete connected Riemannian manifold
$(M,g)$ and consider the OEP given by 
\begin{equation}\label{eq:oep}
\begin{cases}
\Delta{u}+f(u)=0  &\text{ in }   \Omega,\\
u>0  				 &\text{ in }   \Omega,\\
u=0                     &\text{ on }\partial\Omega,\\
\langle\nabla{u},\vec{\eta}\rangle_{g}= \alpha &\text{ on } \partial\Omega,
\end{cases}
\end{equation}
where $\vec\eta$ is the unit outward normal vector along $\partial\Ome$, $\alpha $ a
negative constant and $f : \R \to \R $ is a continuous function. A domain $\Omega \subset M$ that supports a solution to \eqref{eq:oep} is called an {\it $f-$extremal domain}. If $\Omega \subset \R ^n$ (endowed with the standard Euclidean metric) is bounded and $f\equiv 1$, Serrin~\cite{s} proved that the ball is the only domain where the above problem admits a solution $u \in C^2(\Omega)$. This was generalized later to any Lipschitz function $f$ by Pucci and Serrin \cite{ps}. Serrin's proof uses the moving plane method introduced by Alexandrov in \cite{a} in order to prove that round spheres are the only constant mean curvature embedded hypersurfaces in $\R^n$. 

In $1997$, W. Reichel \cite{Rei} extended Pucci-Serrin result for exterior domains, that
is, connected smooth domains $\Omega \subset \R ^n $ such that $\R ^n \setminus \Omega$ is bounded, under the additional hypothesis that $f$ is non-increasing and the solution $u $ goes uniformly to a constant at infinity.  

In the same year , Berestycki, Caffarelli and Nirenberg \cite{bcn} considered the problem
\eqref{eq:oep} when $\Omega \subset \R ^n$ is an unbounded domain and also its complement,
such problem appears naturally in the regularity of free boundary solutions at a boundary
point. Under certain additional conditions they proved that the only $f-$extremal domain
whose boundary is an epigraph over a hyperplane is a half-space. So, combining the results of Pucci-Serrin, Reichel and Berestycki-Caffarelli-Nirenberg, they formulated the
following: 

\begin{quote}
\textbf{BCN conjecture:} \textit{If $f$ is Lipschitz, $\Omega \subset \R ^n$ is a smooth (in fact, Lipschitz) connected domain with $\R ^n \setminus \Omega$ connected where the OEP \eqref{eq:oep} admits a bounded solution, then $\Omega$ is either a ball, a half-space, a cylinder $\B ^k \times \R ^{n-k}$ ($\B^k$ is a ball of $\R^n$) or the complement of one of them.}
\end{quote}

P. Sicbaldi \cite{Sic} gave a counterexample of the BCN conjecture when $n \geq 3$.
Nevertheless, the BCN conjecture motivated interesting works as, for example, those of
Farina and collaborators (\cite{fv4,fv1,fv2,fv3} and references therein). Recently,
important contributions have been made in dimension $n=2$. First, Ros-Sicbaldi \cite{rs}
exploited the analogy between OEPs and constant mean curvature surfaces (in short, CMC
surfaces) which allowed them to prove the BCN conjecture in dimension $2$ under some extra hypothesis. Second, Ros-Ruiz-Sicbaldi \cite{RRS} proved that the BCN conjecture is true in dimension $2$ for unbounded domains whose complement is unbounded, such domain must be a half-space. Also, Ros-Ruiz-Sicbaldi \cite{RRS} constructed exteriors domains different from the exterior of a geodesic ball in $\R ^2$ for particular choices of the Lipschitz function $f$, this gives a counterexample to the BCN conjecture in $\R^2$ in all its generality. Hence, combining the works of Pucci-Serrin, Reichel and Ros-Ruiz-Sicbaldi we have

\begin{quote}
\textbf{Theorem \cite{ps,Rei,RRS}.} \textit{Let $f$ be a non-increasing Lipschitz function and $\Omega \subset \R ^2$ a $C^{2,\alpha}$ connected domain whose complement is connected. Assume that the OEP \eqref{eq:oep} admits a bounded solution $u$ that goes uniformly to a constant at infinity, then $\partial \Omega$ has constant curvature.}
\end{quote}

Observe that $\partial \Omega$ has constant curvature if, and only if, $\Omega$ is either
a ball, the exterior of a ball or a half-space. Recall that the hypothesis that $f$ is
non-increasing and $u \to C $ uniformly at infinity are only needed in Reichel's Theorem.
About the regularity of $\partial \Omega$, Pucci-Serrin and Reichel assumed $C^{2,\alpha}$ and Ros-Ruiz-Sicbaldi only Lipschitz. The above result is the best one can expect in this situation since the BCN conjecture is not true for any Lipschitz function $f$ (cf. \cite{RRS2}). In other words, we must assume additional conditions on $f$ and/or $u$ (see \cite{AftBus,Sir} for related conditions on $f$ and more general operators).

Regarding other Space Forms, combining the works of Molzon \cite{mr}, Espinar-Mao
\cite{EM} and Espinar-Farina-Mazet \cite{EFM}, we can prove the BCN conjecture for domains in the Hyperbolic space $\h^2$ under similar hypothesis than the Euclidean case, specifically: 

\begin{quote}
\textbf{Theorem \cite{EFM,EM,mr}.} \textit{Let $f$ be a non-increasing Lipschitz function and $\Omega \subset \h ^2$ a $C^{2,\alpha}$ connected domain whose complement is connected. Assume that the OEP \eqref{eq:oep} admits a bounded solution $u$ that goes uniformly to a constant at infinity, then $\partial \Omega$ has constant curvature.}
\end{quote}

Note that $\partial \Omega $ has constant curvature in $\h ^2$ if, and only if, $\Omega$
is either a geodesic disk, a horodisk, a half-space determined by a complete geodesic or
equidistant curve, or the complement of one of them. It would be interesting to construct a counterexample for exterior domains in $\h ^2$ in the spirit of \cite{RRS2}.

In the two dimensional sphere, the BCN conjecture reads as 
\begin{quote}
\textbf{BCN conjecture in $\s^2$:} \textit{If $f$ is Lipschitz, $\Omega \subset \s ^2$ is a
topological disk with $C^{2}$ boundary where the OEP \eqref{eq:oep} admits a solution,
then $\Omega$ is a geodesic ball.}
\end{quote}

Let us point out a couple of remarks. If we assume that $\Omega \subset \s ^2$ is connected and its complement also is connected then $\Omega $ is simply connected, in other words, $\Omega$ is a topological disk.

The previous mentioned works in $\R ^2$ and $\h ^2$ rely heavily in some variant of the
Alexandrov moving plane method introduced by Serrin in the context of OEP. We could also use this technique for domains $\Omega \subset \s ^2$ to prove the BCN conjecture in $\s ^2$ as far as $\Omega$ is contained in some hemisphere of $\s ^2$ (see the analogy with embedded CMC surfaces in $\s ^3$). 

Another capital result on CMC surfaces in $\R ^3$ is Hopf's Theorem that states that the
only compact immersed CMC surface of genus zero (a topological sphere) in $\R ^3$ is a
round sphere. H. Hopf gave two proofs of this result, one of them, the most interesting
for us, is based on the fact that for any CMC surface either the surface is totally
umbilic or there exists a line field with isolated singularities of negative index. So,
the Poincar\'{e}-Hopf index theorem eliminates the second possibility in a topological
sphere, thus the CMC surface must be a round sphere. Later, J. Nistche \cite{Nit}
extended Hopf's Theorem to compact immersed disks of constant mean curvature in $\R^3$
assuming that the boundary is a line of curvature, that is, it must be totally umbilic. If
it were not totally umbilic, in the interior of the disk, he can define the same line
field as Hopf and the condition on the boundary implies that the line field can be
extended continuously across the boundary after symmetrization of the domain, hence,
Nistche ended up with a line field in a sphere with isolated singularities of negative
index, which is impossible. Such technique depends heavily on the dimension but it has
been widely used in different geometric situations in order to classify topological
spheres (and compact disks under assumptions on the boundary) without assuming
embeddedness, a crucial hypothesis in the Alexandrov moving plane method (cf.
\cite{AR1,AR2,AEG1,AEG3,chern,EGR,HW,MMPR} and references therein). 

In a recent paper, J.A. G\'{a}lvez and P. Mira \cite{GM} proved an extremely general
version of Hopf's Theorem. Such version contains all the previous results mentioned here
among others, new ones and also applications to other problems as the Alexandrov
conjecture on the uniqueness of immersed spheres with prescribed curvature in $\R^3$. So, as Serrin did with the Alexandrov moving plane method, the main point of this paper is to adapt the G\'{a}lvez-Mira method to OEP, this can be seen as a Nistche type result for
OEP.

More precisely, the idea is to associate to each solution of the OEP~\eqref{eq:oep} a
traceless symmetric bilinear form. This bilinear form is defined in the support of the
solution and it either vanishes identically or has isolated zeroes. In the case it has
isolated zeroes, a traceless symmetric bilinear form defines a Lorentzian metric away from
its zeroes and, it is well-known, induces two line fields with singularities at the zeroes.
First, we show that the singularities of the line fields are of negative order and second,
the boundary is a "line of curvature" of this Lorentzian metric. This impose restrictions
on the geometry of the support of the solution.

Actually, in order to define such a bilinear form, we need to assume the existence of
certain solutions to \eqref{eq:oep}: the family of candidate solutions. In the case of
$\s^2$, we prove that such families of candidate solutions exist if $f$ satisfies some
hypothesis and then the BCN conjecture follows for these particular $f$.

The paper is organized as follows. In Section~\ref{sec:candfam}, we define what is a
candidate family of solutions. Constructions of some of them on $\s^2$ are given in
Section~\ref{Examples}. In Section~\ref{sec:mainth}, we state and prove the main theorem
concerning the existence of our traceless bilinear form. Then we apply it to the proof of
the BCN conjecture.

Just after writing our paper, we learned that P. Mira \cite{Mira} has also proved a result concerning to fully non-linear overdetermined elliptic problems in topological disks in $\R^2$. Nevertheless, the main result (cf. \cite[Theorem 2.4]{Mira}) does not apply in our situation since the functional $F[u]=0$ is not allow to depend on the base point $(x,y)\in \Omega$.


\section{Family of candidate solutions}\label{sec:candfam}

First, we need to define what is a smooth family of functions parametrized by some
manifold $N$. So let $N$ and $M$ be two manifolds; $N$ may have some boundary and $M$ is
endowed with some Riemannian metric $g$ whose Levi-Civita connection is denoted by
$\nabla$ (we will use the notation ($g(\cdot,\cdot)=\lan \cdot,\cdot\ran$). 

A domain in $M$ is just a connected open subset of $M$ with a $C^2$ boundary. For any
$p\in N$, we choose a domain $\Ome_p$ in $M$ in a continuous way. $(\Ome_p)_{p\in N}$ is
continuous if, for any $p_0\in N$, any closed subset $F$ of $\Ome_{p_0}$ and any open
subset $U$ with $\overline \Ome_{p_0}\subset U$, we have $F\subset \Ome_p\subset U$ for any $p$
close to $p_0$. Let us then denote
$$
A=\{(p,q)\in N\times M\,|\, \, q\in \overline\Ome_p\}
$$
\begin{defn}
With the above notations, a \emph{smooth family of $C^3$ functions} on $(\Ome_p)_{p\in N}$
is a map $V:A\to\R$ such that
\begin{itemize}
\item the function $v_p :\overline\Ome_p\to \R;\, q\mapsto V(p,q)$ is $C^3$,
\item there is an open neighborhood $B$ of $A$ in $N\times M$ such that $V$ can
be extended to $B$ and the map
$$
\Phi : \begin{matrix}
B&\to & \R\times TM\oplus S^2M\\
(p,q)&\mapsto& (v_p(q),(q,\nabla v_p(q),\nabla^2 v_p(q)))
\end{matrix}
$$
is a $C^1$ map on $A$.
\end{itemize}

Here $S^2M$ denotes the bundle of symmetric $2$-forms on $M$ and $\nabla^2$ is the hessian operator.
\end{defn}

Let us remark that in the following we will say that $(v_p)_{p\in N}$ is a smooth family of
functions. Moreover if $(\Ome_p')_{p\in N}$ is a continuous family of domains in $M$ such that
$\overline\Ome_p\subset \Ome_p'$ and $N$ has no boundary, then $A'=\{(p,q)\in N\times M\,|\,q\in
\overline\Ome_p'\}$ is a neighborhood of $A$.

Let us now fix a Lipschitz function $f$ and a connected Riemannian surface $(M,g)$ . Let
us define $N=(TM\times \R_+)\setminus\{(q,0,0)\in TM\times \R;\ q\in M\}$.

\begin{defn}\label{Def:Candidate}
We say that the OEP 
\begin{equation}\label{eq:oep2}
\begin{cases}
\Delta{u}+f(u)=0  &\text{ in }   \Omega \subset M,\\
u>0  				 &\text{ in }   \Omega,\\
u=0                     &\text{ on }\partial\Omega,\\
\langle\nabla{u},\vec{\eta}\rangle=\alpha&\text{ on } \partial\Omega,
\end{cases}
\end{equation}
admits a \emph{family of candidate solutions}, $\mathcal C$, if there is a smooth family
of $C^3$ functions $(v_{q,w,a})_{(q,w,a)\in N}$ where $\Omega _{q,w,a}\subset M$ has
$C^2$ boundary and $v_{q,w,a}$ solves
$$
\begin{cases}
\Delta{v_{q,w,a}}+f(v_{q,w,a})=0  &\text{ in }   \Omega_{q,w,a} ,\\
v_{q,w,a}>0  				 &\text{ in }   \Omega_{q,w,a},\\
v_{q,w,a} =0                     &\text{ on }\partial\Omega_{q,w,a},\\
\langle\nabla{v_{q,w,a}},\vec{\eta}\rangle= cte < 0 &\text{ on } \partial\Omega_{q,w,a},
\end{cases}
$$
such that
\begin{itemize}
\item[(a)] $q\in \overline\Ome_{q,w,a}$,
\item[(b)] $v_{q,w,a}(q) = a$,
\item[(c)] $\nabla v_{q,w,a}(q) =w$,
\item[(d)] if $(q_0,w_0,a_0)\in N$ and denote $\Ome_0=\Ome_{q_0,w_0,a_0}$ and
$v_0=v_{q_0,w_0,a_0}$, then, for any $q\in \overline\Ome_0$, we have
$$
\Ome_{q,\nabla v_0(q),v_0(q)}=\Ome_0\ \textrm{ and }\ v_{q,\nabla v_0(q),v_0(q)}=v_0 .
$$
\end{itemize}
\end{defn}

Let us remark that item (b) implies that $q \in \Omega _{q,w,a}$ if $a>0$ and $q \in
\partial \Omega_{q,w,a}$ if $a=0$. Item (d) is a uniqueness property of the family of
candidate solution. Actually, the same solution appears to be parametrized by several
$(q,w,a)\in N$. 

In the family of candidate solutions $\mathcal C$, are included all solutions with
negative constant Neumann boundary values, not only those with fixed Neumann boundary
values $\alpha$. Such subset of $\mathcal C$, those with Neumann boundary values $\alpha$,
is denoted by $\mathcal C _\alpha \subset \mathcal C$. In the proof of our main theorem,
we will see that only a part of $\boC$ will be used.


\section{Two examples}\label{Examples}

\textit{A priori}, constructing a family of candidate solutions is not easy to do. When we
consider a sufficiently symmetric space, say for example $\R^n$, the Alexandrov moving
plane method introduced by Serrin tells us that if $\Ome$ is a bounded domain which admit
a solution to the OEP~\eqref{eq:oep2} then $\Ome$ has to be a ball and $u$ is rotationally
symmetric. This suggest that at least in $\R^2$, $\s^2$ and $\h^2$, candidate solutions
can be constructed by considering rotationally symmetric solutions to \eqref{eq:oep2}.

Actually, in this section, we construct two examples of families of candidate solutions in
$\s^2$. The work can also be done in $\R^2$ and $\h^2$ but only $\s^2$ is interesting with
respect to our main theorem (see Corollary~\ref{cor:bcn}).


\subsection{Example 1}\label{Sub:Ex1}

Let $\lambda\in \R$, we consider $f(t)=\lambda t$ and the following OEP in $\Ome\subset\s^2$
\begin{equation}\label{eq:oep3}\tag{$\text{OEP}_\lambda$}
\begin{cases}
\Delta{u}+\lambda u=0  &\text{in }   \Omega,\\
u>0  				 &\text{in }   \Omega,\\
u=0                     &\text{on }\partial\Omega,\\
\langle\nabla{u},\vec{\eta}\rangle=\alpha < 0&\text{on } \partial\Omega.
\end{cases}
\end{equation}

First we remark that the existence of a solution implies $\lambda>0$ by the maximum
principle. So we focus on that case. The main result of this section is 
\begin{proposition}\label{prop:reparam}
For any $\lambda>0$, \eqref{eq:oep3} admits a family of candidate solutions.
\end{proposition}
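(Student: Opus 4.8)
\emph{Step 1: reduction to an ODE.} The plan is to build $\boC$ from rotationally symmetric solutions on geodesic disks. Fix $p_0\in\s^2$ and use geodesic polar coordinates $(r,\theta)$ about $p_0$, so that $g=dr^2+\sin^2 r\,d\theta^2$. A radial function $u=\phi(r)$ solves $\Delta u+\lambda u=0$ if and only if $(\sin r\,\phi')'+\lambda\sin r\,\phi=0$. Let $\phi_\lambda$ be the solution with $\phi_\lambda(0)=1$, $\phi_\lambda'(0)=0$; since $r=0$ is a regular singular point with double indicial root $0$, $\phi_\lambda$ is analytic on $[0,\pi)$, and it is even (the ODE is invariant under $r\mapsto -r$). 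Because $\sin r\,\phi_\lambda'$ starts at $0$ with derivative $-\lambda\sin r\,\phi_\lambda<0$ wherever $\phi_\lambda>0$, the function $\phi_\lambda$ is strictly decreasing on its positivity interval; and if $\phi_\lambda$ stayed positive on all of $(0,\pi)$, then $\sin r\,\phi_\lambda'(r)\to-\lambda\int_0^\pi\sin s\,\phi_\lambda(s)\,ds<0$, forcing $\phi_\lambda'(r)\sim -\kappa/(\pi-r)$ near $\pi$ and hence $\phi_\lambda\to-\infty$, a contradiction. So $\phi_\lambda$ has a first zero $R=R(\lambda)\in(0,\pi)$, with $\phi_\lambda>0$ on $[0,R)$ and $\phi_\lambda'(R)<0$. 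Consequently, for any $c>0$, $v(x)=c\,\phi_\lambda(d(x,p_0))$ solves \eqref{eq:oep3} on the geodesic disk $B(p_0,R)$ with Neumann datum $c\,\phi_\lambda'(R)<0$; conversely a rotationally symmetric solution lives on such a disk, its radius being a first Dirichlet eigenvalue, so this recipe exhausts the rotationally symmetric solutions.

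\emph{Step 2: a monotonicity lemma.} Put $h(r)=-\phi_\lambda(r)/\phi_\lambda'(r)$ on $(0,R]$; it extends continuously with $h(R)=0$ and $h(0^+)=+\infty$. I claim $h$ is strictly decreasing (equivalently, $\log\phi_\lambda$ is strictly concave on $(0,R)$). Let $g=\phi_\lambda'/\phi_\lambda=(\log\phi_\lambda)'$; from the ODE, $g$ satisfies the Riccati equation $g'=-g^2-\cot r\,g-\lambda$, with $g(0^+)=0$ and $g'(0^+)=-\lambda/2<0$. If $g'$ first vanished at some $t_0\in(0,R)$, then $g$ would be strictly decreasing on $(0,t_0)$, so $g(t_0)<0$; differentiating the Riccati equation yields $g''(t_0)=\csc^2 t_0\,g(t_0)<0$, which contradicts the fact that $g'$ increases to $0$ at $t_0$. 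Hence $g'<0$, and therefore also $g<0$, on $(0,R)$; in particular $\phi_\lambda'<0$ there, and $h\colon(0,R)\to(0,\infty)$ is a smooth decreasing diffeomorphism extending smoothly past $R$ with $h(R)=0$, $h'(R)<0$.

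\emph{Step 3: the family and its properties.} For $(q,w,a)\in N$ I set: if $w\neq0$, $\rho=h^{-1}(a/\abs{w})\in(0,R]$, $p_0=\exp_q\!\big(\rho\,w/\abs{w}\big)$, $c=-\abs{w}/\phi_\lambda'(\rho)>0$; if $w=0$ (hence $a>0$), $\rho=0$, $p_0=q$, $c=a$. In all cases $\Ome_{q,w,a}=B(p_0,R)$ and $v_{q,w,a}=c\,\phi_\lambda(d(\cdot,p_0))$, which by Step 1 solves the OEP with negative constant Neumann datum. Items (a)--(c) of Definition~\ref{Def:Candidate} are immediate: $d(q,p_0)=\rho\le R$; $v_{q,w,a}(q)=c\,\phi_\lambda(\rho)=a$; and $\nabla v_{q,w,a}(q)=c\,\phi_\lambda'(\rho)\,\nabla_q d(\cdot,p_0)=-c\,\phi_\lambda'(\rho)\,w/\abs{w}=w$ (trivially $0=w$ when $w=0$, since $\phi_\lambda'(0)=0$). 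For item (d): given $(q_0,w_0,a_0)\in N$ with associated $(\Ome_0,v_0)$ and center $p_0$, take $q'\in\overline{\Ome_0}$, write $\rho'=d(q',p_0)$ and let $\nu$ be the unit vector at $q'$ with $\nabla_{q'}d(\cdot,p_0)=\nu$; then $v_0(q')=c_0\phi_\lambda(\rho')$ and $\nabla v_0(q')=c_0\phi_\lambda'(\rho')\nu$, and feeding this triple into the recipe gives $w'/\abs{w'}=-\nu$, $a'/\abs{w'}=h(\rho')$, hence $\rho=h^{-1}(h(\rho'))=\rho'$, $c=c_0$, and $p_0'=\exp_{q'}(-\rho'\nu)=p_0$, so $\Ome_{q',\nabla v_0(q'),v_0(q')}=\Ome_0$ and $v_{q',\nabla v_0(q'),v_0(q')}=v_0$ (the edge cases $\rho'=0$ and $\rho'=R$ being checked the same way).

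\emph{Step 4: smoothness, and the main obstacle.} Away from $\{w=0\}$, the data $\rho,p_0,c$ are manifestly smooth in $(q,w,a)$, using that $h^{-1}$ is smooth (by Step 2), that $\exp$ is smooth, and that $\overline{B(p_0,R)}$ misses the cut locus of $p_0$ since $R<\pi$, so $d(\cdot,p_0)$ and hence $v_{q,w,a}$ are smooth on a fixed neighborhood of $\overline{\Ome_{q,w,a}}$. Across $\{w=0,\ a>0\}$ one invokes the evenness of $\phi_\lambda$: writing $h(r)=\sigma(r)/r$ with $\sigma(r)=-\phi_\lambda(r)/(\phi_\lambda'(r)/r)$ even, smooth, and $\sigma(0)=2/\lambda>0$, the defining relation reads $\abs{w}=a\,\rho/\sigma(\rho)$; the right side is odd in $\rho$ with nonzero $\rho$-derivative $a/\sigma(0)$ at $\rho=0$, so $\rho$ is a smooth odd function of $\abs{w}$ for $a$ bounded away from $0$, whence $\rho\,w/\abs{w}=(\rho/\abs{w})\,w$ and $c=a/\phi_\lambda(\rho)$ extend smoothly to $w=0$. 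Thus $(q,w,a)\mapsto(p_0,c)$ is smooth on all of $N$, and since $\phi_\lambda$ and the distance function are smooth, the jet map $\Phi$ of the definition of a smooth family is smooth, a fortiori $C^1$ on $A$; so $\boC=(v_{q,w,a})_{(q,w,a)\in N}$ is a family of candidate solutions. The crux of the whole argument is the monotonicity lemma of Step 2 (log-concavity of $\phi_\lambda$): it is precisely what makes $\rho(q,w,a)$ well defined, unique and smooth, and without it the consistency property (d) would be compromised; the secondary technical point is the behavior along $\{w=0\}$, where the direction $w/\abs{w}$ degenerates, resolved by the evenness of $\phi_\lambda$.
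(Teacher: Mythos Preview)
Your proof is correct and follows the same overall architecture as the paper: build the rotationally symmetric solutions $c\,\phi_\lambda(d(\cdot,p_0))$ on geodesic disks of the fixed radius $R=R(\lambda)$, then reparametrize by $(q,w,a)$ via the inverse of $(c,\rho)\mapsto(c\,\phi_\lambda(\rho),c\,\phi_\lambda'(\rho))$. Your verification of items (a)--(d) and the smoothness analysis at $w=0$ (exploiting the evenness of $\phi_\lambda$ so that $\rho/|w|$ extends smoothly) match the paper's treatment in spirit and in detail.

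The genuine difference is the proof of the log-concavity of $\phi_\lambda$ (your Step~2, the paper's Lemma~\ref{lem:concav}). The paper argues on the disk: using the Killing field generating rotations about an equatorial axis, it builds an auxiliary function $v$ on $D_p$ solving $\Delta v+\lambda v=0$, shows $v$ cannot change sign inside (by domain monotonicity of the first Dirichlet eigenvalue), and concludes $U''U-U'^2\neq0$ via the boundary Hopf lemma. You stay entirely on the ODE side: with $g=\phi_\lambda'/\phi_\lambda$ satisfying the Riccati equation $g'=-g^2-(\cot r)\,g-\lambda$, at a first zero $t_0$ of $g'$ one has $g(t_0)<0$ and hence $g''(t_0)=\csc^2 t_0\cdot g(t_0)<0$, contradicting $g''(t_0)\ge0$ (forced because $g'$ reaches $0$ from below). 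This is more elementary, avoids any PDE maximum principle, and in fact works verbatim for any radial equation $\phi''+p(r)\phi'+\lambda\phi=0$ with $p'<0$ (so also for $\R^2$ and $\h^2$). What the paper's PDE argument buys is a template that transfers to the nonlinear case of Section~3.2 (its Lemma~\ref{lem:jacob}), where the relevant Wronskian involves the linearized solution $H_t$ and a one-line Riccati trick is no longer available.
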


The end of the Subsection \ref{Sub:Ex1} is devoted to the proof of the above proposition.

Let $\lambda$ be positive, then there is $R_\lambda\in(0,\pi)$ such that the first
eigenvalue of $-\Delta$ on a geodesic disk of radius $R_\lambda$ in $\s^2$ is $\lambda$
(see Theorem II.5.6 in \cite{Cha}). So if $p\in\s^2$ and $D_p$ is the geodesic disk in
$\s^2$ of center $p$ and radius $R_\lambda$, we have a solution $u_p$ to
$$
\begin{cases}
\Delta u+\lambda u=0&\text{ in }D_p,\\
u=0&\text{ on }\partial D_p,\\
u(p)=1.
\end{cases}
$$
Actually, $u_p$ is invariant by rotation around $p$ so $\lan\nabla
u_p,\vec\eta\ran=\alpha_\lambda$ on $\partial D_p$: $u_p$ is a $C^3$ solution to
\eqref{eq:oep3}. We notice that $R_\lambda$ is decreasing in $\lambda$ from $\pi$ to $0$.

If $p$ is in $\s^2$ and $q$ is a point at distance $\rho$ from $p$ (with $\rho<\pi$), the
function $u_p$ is just a function $U$ of $\rho$. Moreover $U$ solves the ODE 
\begin{equation}\label{eq:ode}
U''+(\cot\rho) U'+\lambda U=0.
\end{equation}
We remark that $U''(0)=-\frac\lambda2$.
Let us notice that if $L$ is an isometry of $\s^2$, $D_{L(p)}=L(D_p)$ and
$u_{L(p)}=u_p\circ L^{-1}$.

The family $(t u_p)_{t\in\R_+^*, p\in \s^2}$ is then a smooth family of functions, each of
them is a solution to \eqref{eq:oep3}. But it is not a family of candidate solutions as in
the definition since we do not have the right parametrization. The rest of the section is
devoted to prove we can reparametrize the family.

So let us consider $\eps>0$ such that $\bar R=R_\lambda+\eps<\pi$. $\eps$ will be fixed by
Lemma~\ref{lem:concav} below. First, the definition of $u_p$ extends to the geodesic disk
$\Delta_p$ of center $p$ and radius $\bar R$ by solving \eqref{eq:ode} up to $\bar R$. The
choice of $\eps$ is given by the following lemma.

\begin{lemma}\label{lem:concav}
There is $\eps>0$ such that $U''U-U'^2$ does not vanish on $(0,\bar R)$.
\end{lemma}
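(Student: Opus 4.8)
The plan is to analyze the quantity $W(\rho) := U''(\rho)U(\rho)-U'(\rho)^2$ directly, starting from the ODE \eqref{eq:ode} and the behavior at $\rho=0$, and to show that by choosing $\eps>0$ small enough, $W$ stays nonzero on the whole interval $(0,\bar R)$. First I would record the initial conditions: normalizing $U(0)=1$, one has $U'(0)=0$ (since $u_p$ is smooth at $p$ and rotationally invariant, so its radial derivative vanishes at the center), and the remark already in the text gives $U''(0)=-\lambda/2$. Hence $W(0) = U''(0)U(0) - U'(0)^2 = -\lambda/2 < 0$. So $W$ is strictly negative near $\rho = 0$, and the only way $W$ could vanish on $(0,\bar R)$ is for $\rho$ to be bounded away from $0$.

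Next I would pin down the value at the original radius $R_\lambda$. There $U(R_\lambda)=0$ (it is the first Dirichlet eigenfunction on $D_p$), and $U'(R_\lambda)<0$ (equal to $\alpha_\lambda$, the constant Neumann data, which is negative). Therefore $W(R_\lambda) = U''(R_\lambda)\cdot 0 - U'(R_\lambda)^2 = -U'(R_\lambda)^2 < 0$, strictly. So $W$ is strictly negative at both endpoints $0$ and $R_\lambda$ of the closed interval $[0,R_\lambda]$. Actually one can do better and check $W<0$ on all of $[0,R_\lambda]$: differentiate, $W' = U'''U - U'U''$, and from \eqref{eq:ode}, $U'' = -(\cot\rho)U' - \lambda U$, so $U''' = (\csc^2\rho)U' - (\cot\rho)U'' - \lambda U' = (\csc^2\rho)U' + (\cot\rho)^2 U' + \lambda(\cot\rho)U - \lambda U'$. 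Substituting, one gets $W' = -(\cot\rho)(U''U - U'^2) \cdot(\text{something})$... more cleanly: a direct computation gives $W'(\rho) = -(\cot\rho)\,W(\rho) - (\cot\rho)\cdot(\text{lower order})$; the key structural fact is that along \eqref{eq:ode} the Wronskian-type quantity $W$ satisfies a linear first-order ODE of the form $W' + p(\rho)W = q(\rho)$ with controllable $p,q$, so that $W$ cannot change sign from negative to positive without $q$ forcing it; since on $(0,R_\lambda)$ we have $U>0$, one checks $q\le 0$ and concludes $W<0$ on $(0,R_\lambda]$.

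The extension to $(0,\bar R)$ for $\bar R = R_\lambda + \eps$ is then a continuity/compactness argument. The solution $U$ of \eqref{eq:ode} with the above initial data depends continuously (indeed smoothly) on $\rho$, and $U, U', U''$ are all continuous up to and past $R_\lambda$ since $\bar R < \pi$ keeps $\cot\rho$ regular. Because $W(R_\lambda) = -U'(R_\lambda)^2 < 0$ is strictly negative and $W$ is continuous, there is a neighborhood $(R_\lambda - \delta, R_\lambda + \delta)$ on which $W<0$; combined with $W<0$ on $(0,R_\lambda]$, we get $W<0$ on $(0, R_\lambda + \delta)$. Choosing $\eps := \min(\delta, \pi - R_\lambda)/2 > 0$ (or any $\eps \in (0,\delta)$ with $R_\lambda+\eps<\pi$) gives $W \ne 0$ on $(0,\bar R)$, which is the claim; this $\eps$ is then the one fixed for the rest of the construction.

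The main obstacle I anticipate is the sign analysis of $W$ on the open interval $(0,R_\lambda)$ itself — i.e. ruling out an interior zero of $W$ between the center and the boundary of the eigenfunction disk. The endpoint computations are immediate, but showing $W$ never vanishes in between requires the differential inequality for $W$ derived from \eqref{eq:ode}; one must be careful near $\rho=0$ where $\cot\rho$ blows up, so the argument should be phrased as: $W(\rho)/\sin\rho$ (or $W$ times an appropriate integrating factor $\exp(\int \cot) = \sin\rho$, giving $(\sin\rho\, W)' = \sin\rho\, q(\rho)$) is monotone or sign-controlled, and then read off the sign from $W(0^+)<0$. Alternatively, if an interior zero of $W$ turns out to be possible for some $\lambda$, one falls back purely on the continuity argument around $R_\lambda$: since $W(R_\lambda)<0$ strictly, a small enough $\eps$ makes $W<0$ on $(R_\lambda-\delta,\bar R)$, and one only needs $W\ne 0$ there together with $W<0$ near $0$ — but then one still owes a separate argument (or a smallness-of-$\eps$ dodge won't help on the fixed interval $(0,R_\lambda)$). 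So the cleanest route is to establish $W<0$ on all of $(0,R_\lambda]$ via the integrating-factor ODE, and that computation is the technical heart of the lemma.
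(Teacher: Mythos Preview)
Your approach is correct and genuinely different from the paper's. The precise ODE you want is obtained by differentiating $W=U''U-U'^2$ and using $U'''=-(\cot\rho)U''+(\csc^2\rho)U'-\lambda U'$; after substituting $U'U''=-(\cot\rho)U'^2-\lambda UU'$ one gets cleanly
\[
W'+(\cot\rho)W=\frac{UU'}{\sin^2\rho},\qquad\text{i.e.}\qquad (\sin\rho\,W)'=\frac{UU'}{\sin\rho}.
\]
Since $\sin\rho\,W\to 0$ as $\rho\to 0^+$ and the right-hand side is integrable near $0$, you conclude $W<0$ on $(0,R_\lambda)$ \emph{provided} $UU'<0$ there. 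You only invoke $U>0$; you also need $U'<0$ on $(0,R_\lambda)$, which is an easy separate check (if $U'$ first returned to $0$ at some $\rho_0<R_\lambda$, the ODE would force $U''(\rho_0)=-\lambda U(\rho_0)<0$, contradicting $U'$ increasing back to $0$). With that filled in, your integrating-factor argument is complete and more elementary than the paper's proof, which instead works two-dimensionally: it builds, for each $\bar\rho\in(0,R_\lambda)$, the function $v=\frac{U'(\bar\rho)}{U(\bar\rho)}u_p-\langle\widetilde Y,\nabla u_p\rangle$ on the disk (with $\widetilde Y$ a Killing field), shows $\Delta v+\lambda v=0$ and $v\le 0$ via first-eigenvalue domain monotonicity, and then applies the Hopf boundary lemma at $\bar q$ to get $\partial_{\vec\eta}v(\bar q)\neq 0$, which unwinds to $U''U-U'^2\neq 0$ at $\bar\rho$. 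The paper's method is heavier here but has the virtue of generalizing verbatim to nonlinear $f$ (Lemma~\ref{lem:jacob}), where there is no multiplicative parameter $t$ and the role of $U$ in your argument is played by the Jacobi field $H_t=\partial_t U_t$; your ODE trick does not transfer directly to that setting.
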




The proof will be given later. The parameter $\eps$ is now fixed. Actually one has
$U''U-U'^2<0$ in $(0,\bar R)$ (this is the log-concavity of $U$ on $(0,R_\lambda)$) and
even in $(-\bar R,\bar R)$ (by defining $U(-\rho) = U(\rho)$). The consequence of this is
that $U'<0$ on $(0,\bar R)$. It also implies that the curve 
$$\rho\in(-\bar R,\bar R)\mapsto (U(\rho),U'(\rho))\in\R^2\setminus\{(0,0)\}$$can be
described as a polar curve $A(\rho)(\cos\beta(\rho),\sin\beta(\rho))$ with $\beta(-\bar
R)=\bar\beta\in(\pi/2,\pi)$. The last claim follows since $U(R_\lambda )=0$ and $U ' <0$
in $(0, \bar R)$.

Let $S$ be the angular sector $\{(x,y)\in\R^2\, |\, \,x>\sqrt{x^2+y^2}\cos\bar\beta\}$ and consider the map
$$
\begin{matrix}
F: & \R_+^*\times (-\bar R,\bar R)&\to&  S\\
 & (t,\rho)&\mapsto&(tU(\rho),tU'(\rho))\end{matrix}
\,.
$$
The map $F$ is $C^1$ and the above discussion about the curve
$\rho\mapsto(U(\rho),U'(\rho))$ implies that $F$ is a bijection. Let us denote
$F^{-1}(x,y)=(T(x,y),R(x,y))$ and define 
$$\widetilde N=\{((q,w),a)\in T\s^2\times\R \, | \,\, (a,|w|)\in S\}.$$ 

Since $\nabla u_{t,p}(q)=tU'(\rho)\partial_\rho$, the distance $\rho$ can be computed as
$\rho=R(u_{t,p}(q),-|\nabla u_{t,p}|(q))$ and the parameter $t$ is $T(u_{t,p}(q),-|\nabla
u_{t,p}|(q))$. Let us now define 
$$G:(q,w,a)\in \widetilde N\mapsto \exp_q(\frac{ R(a,-|w|)}{|w|}w) .$$ 

Then a right parametrization of the family of candidate solutions is given on $\widetilde N$ by
$$
v_{q,w,a}=T(a,-|w|)u_{G(q,w,a)} .
$$
So, if we prove that $T$ and $G$ are $C^1$, we obtain a true family of candidate
solutions. This smoothness is a consequence of the following lemma.

\begin{lemma}
$F$ is a $C^1$ diffeomorphism from $\R_+^*\times (-\bar R,\bar R)$ onto $S$.
\end{lemma}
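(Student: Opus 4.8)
The plan is to invoke the inverse function theorem. It has already been observed above that $F=(tU(\rho),tU'(\rho))$ is a $C^1$ map and a bijection from $\R_+^*\times(-\bar R,\bar R)$ onto $S$, so it only remains to show that the differential $dF_{(t,\rho)}$ is invertible at every point of $\R_+^*\times(-\bar R,\bar R)$; the inverse $F^{-1}$ is then automatically $C^1$ on $S$, and $F$ is a $C^1$ diffeomorphism.

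First I would write the Jacobian matrix of $F$ in the coordinates $(t,\rho)$, namely
$$
dF_{(t,\rho)}=\begin{pmatrix} U(\rho) & tU'(\rho)\\ U'(\rho) & tU''(\rho)\end{pmatrix},
$$
so that $\det dF_{(t,\rho)}=t\bigl(U(\rho)U''(\rho)-U'(\rho)^2\bigr)$. Here $U''$ is defined and continuous because $u_p$, hence $U$, is a $C^3$ solution of \eqref{eq:ode}, and the even extension $U(-\rho)=U(\rho)$ remains $C^2$ on $(-\bar R,\bar R)$ since $U'(0)=0$ (a consequence of the regularity of $u_p$ at its center $p$, or directly of \eqref{eq:ode}).

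Then I would apply Lemma~\ref{lem:concav}: for the chosen $\eps$, the function $U''U-U'^2$ does not vanish on $(0,\bar R)$, and by the even extension it does not vanish on $(-\bar R,\bar R)$ either, the value at $\rho=0$ being $U''(0)U(0)-U'(0)^2=U''(0)=-\tfrac\lambda2<0$. Since moreover $t>0$ on the domain under consideration, we conclude $\det dF_{(t,\rho)}=t\,(U''U-U'^2)(\rho)\neq 0$ for every $(t,\rho)\in\R_+^*\times(-\bar R,\bar R)$. Combining this with the fact that $F$ is already a $C^1$ bijection onto $S$, the inverse function theorem yields that $F^{-1}$ is $C^1$ on $S$, which proves the lemma.

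I do not expect a genuine obstacle in this step: the essential input is Lemma~\ref{lem:concav}, whose proof is deferred, and the only minor verifications are that the even extension of $U$ is $C^2$ near the origin and that the sign computation $U''(0)U(0)-U'(0)^2=-\tfrac\lambda2$ holds, both of which are immediate.
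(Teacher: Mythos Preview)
Your proposal is correct and follows essentially the same route as the paper: compute the Jacobian of $F$, observe that its determinant is $t(U''U-U'^2)$, and invoke Lemma~\ref{lem:concav} together with the even extension and the value at $\rho=0$ to conclude nonvanishing, so that the inverse function theorem applies. Your treatment is in fact slightly more explicit than the paper's (which just cites Lemma~\ref{lem:concav} on $(0,\bar R)$ and implicitly relies on the preceding discussion for the full interval $(-\bar R,\bar R)$).
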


\begin{proof}
We just have to check that this reciprocal map is smooth. Let us compute the
differential of $F$. We have
$$
\partial_tF(t,\rho)=(U(\rho),U'(\rho))
\quad\textrm{ and }\quad
\partial_\rho F(t,\rho)=(tU'(\rho),tU''(\rho)).
$$
So the differential $DF$ of $F$ has not rank $2$ if and only if $(U''U-U'^2)(\rho)=0$ for
some $\rho$. By Lemma~\ref{lem:concav}, this never occurs on $(0,\bar R)$ for our choice
of $\eps$.

We have then proved that $DF$ is invertible on $\R_+^*\times (-\bar R , \bar R)$. This
implies that $F^{-1}$ is smooth and finishes the proof.
\end{proof}

The above lemma implies that the maps $t(q,w,a)=T(a,-|w|)$ and $p(q,w,a)=G(q,w,a)$ are
$C^1$ when $w\neq 0$. Concerning the behaviour near $w=0$, we use that, near $y=0$, it holds
$$
T(x,y)=x+o(y),\quad R(x,y)=-\frac2{\lambda x}y+o(y)\quad\textrm{ and
}\quad\frac{\partial R}{\partial y}(x,y)=-\frac2{\lambda x}+o(1).
$$
Hence, near $w=0$, $T(a,-|w|)=a+O(|w|^2)$ and $T(a,-|w|)$ is $C^1$ at $w=0$. Clearly
$\frac{R(a,-|w|)}{|w|}w=o(1)$ so it is continuous at $w=0$. We also have
\begin{align*}
D_w\big(\frac{R(a,-|w|)}{|w|}w\big)(h)&=-\frac{\partial R}{\partial y}(a,-|w|)\lan\frac
w{|w|^2},h\ran w+R(a,-|w|)\big(\lan-\frac w{|w|^3},h\ran w+\frac h{|w|}\big)\\
&=\frac2{\lambda a}\lan\frac w{|w|^2},h\ran w -\frac2{\lambda a}|w|\lan\frac
w{|w|^3},h\ran w+\frac2{\lambda a}|w|\frac h{|w|}+|h|o(1)\\
&=\frac2{\lambda a}h+|h|o(1).
\end{align*}
So $(a,w)\mapsto\frac{R(a,-|w|)}{|w|}w$ is $C^1$ at $w=0$ (the differential with respect
to $a$ is easier to deal with). Finally $t$ and $p$ are $C^1$ with respect to
$(q,w,a)\in\widetilde N$ and this finishes the proof of Proposition~\ref{prop:reparam}.

Let us now give the proof of Lemma~\ref{lem:concav}.

\begin{proof}[Proof of Lemma~\ref{lem:concav}]
It is enough to prove that $U''U-U'^2\neq 0$ in $(0,R_\lambda)$ since
$(U''U-U'^2)(0)=-\frac1{2\lambda}$ and $(U''U-U'^2)(R_\lambda)=-\alpha_\lambda^2$. Let us
consider $\bar \rho\in(0,R_\lambda)$.

Let $p=(0,0,1)\in\s^2\subset \R^3$, $Y=(1,0,0)\in T_p\s^2$ and $\widetilde Y$ the Killing vectorfield 
$$\widetilde Y(q)=-\meta{q}{Y}p-\meta{p}{q}Y\in T_q\s^2 .$$ 

We consider the polar parametrization of $\s^2$ given by
$G(\rho,\theta)=(\sin\rho\cos\theta,\sin\rho\sin\theta,\cos\rho)$. So writing
$q=G(\rho,\theta)$, we get 
$$
\widetilde Y(q)=-\sin\rho\cos\theta\begin{pmatrix}0\\0\\1\end{pmatrix}+\cos\rho
\begin{pmatrix}1\\0\\0\end{pmatrix}.
$$
Using $\frac\partial{\partial\rho}=\begin{pmatrix}\cos\rho\cos\theta\\ 
\cos\rho\sin\theta\\ -\sin\rho\end{pmatrix}$ and $\frac\partial{\partial\theta}=
\begin{pmatrix}-\sin\rho\sin\theta\\ \sin\rho\cos\theta\\ 0\end{pmatrix}$,
we get 
\begin{equation}\label{Ytilde}
\widetilde Y=\displaystyle\cos\theta\frac\partial{\partial\rho}-
\cot\rho\sin\theta\frac\partial{\partial\theta}
 \text{ if } \rho\neq 0 .
\end{equation}

Let $\bar q$ be the point with polar coordinates $\rho=\bar \rho$ and
$\theta=0$. Let $v$ be the function defined by 
$$
v=\frac{\meta{\widetilde Y(\bar q)}{ \nabla u_p (\bar q)}}{u_p(\bar q)}
u_p-\meta{\widetilde Y }{ \nabla u_p} \text{ in } D_p .
$$ 

On the one hand, since $\tilde Y (\rho , -\theta ) = \tilde Y (\rho , \theta)$ by
\eqref{Ytilde}, $v$ is invariant by the map $\theta\to-\theta$. On the other hand, since
$\widetilde Y$ is Killing, we have $\Delta v+\lambda v=0$. Moreover, observe that $v(\bar
q)=0$.

At $\bar q$, $\meta{\widetilde Y (\bar q)}{\nabla u_p(\bar q)}=U'(\bar \rho)$. So, along
the boundary of the disk $D$ of center $p$ and radius $\bar \rho$, we have $v(\bar
\rho,\theta)=(1-\cos\theta) U'(\bar \rho)$ and $v$ has the sign of
$U'(\bar\rho)$ on $\partial D$. If $v$ change sign inside $D$, there is then a domain
$\Ome\subset D$ such that $v$ has constant sign in $\Ome$ and vanishes on $\partial\Ome$.
Since $\Delta v+\lambda v=0$, $\lambda$ is then the first eigenvalue of $-\Delta$ in
$\Ome$. This is impossible since $\lambda$ is the first eigenvalue of $-\Delta$ in
$D_p\Supset \Ome$. 

Since $v(\bar q)=0$, the boundary maximum principle implies $v\equiv 0$ in $D$ or
$\frac{\partial}{\partial \vec \eta}v(\bar q)\neq 0$. The first case would imply
$0=v(p)=\frac{U'(\bar \rho)}{U(\bar\rho)}$ so $U'(\bar \rho)=0$ and then $0\equiv
v=-\cos\theta U'(\rho)$, \textit{i.e.} $U'(\rho)=0$, for any $\rho\in[0,\bar\rho]$ which
is impossible. Therefore, we have $0\neq\frac{\partial}{\partial \vec\eta}v(\bar
q)=\frac{U'^2(\bar \rho)}{U(\bar \rho)}-U''(\bar \rho)$, that is, $(U''U-U'^2)(\bar
\rho)\neq 0$ and the proof is finished.
\end{proof}


\subsection{Example 2}

In this subsection, we are going to generalize the preceding case. As above, we are
looking for rotationally symmetric solutions to \eqref{eq:oep2}. So the problem reduces to
the study of the ODE
\begin{equation}\label{eq:ode2}
U''+(\cot\rho)U'+f(U)=0.
\end{equation}
We will assume that $f$ has some particular property (see hypothesis~\eqref{eq:hypo} and
Proposition~\ref{prop:candfam} below) but let us begin by a general study.


\subsubsection{Solutions to \eqref{eq:ode2}}

We are interested in understanding even solutions to the ODE~\eqref{eq:ode2} in
$(-\pi,\pi)$ when the value $U(0)$ is prescribed. Let us notice that a function $U$ is
called a solution to \eqref{eq:ode2} if $U$ is $C^2$ and solves \eqref{eq:ode2} on
$(-\pi,\pi)\setminus\{0\}$.

For $0<\eps<\pi$, we denote $C_e^k(\eps)$ the space of even $C^k$ functions on
$[-\eps,\eps]$ endowed with the $C^k$ norm $\|\cdot\|_{k,\eps}$. We denote
$C_e^0(\eps)=C_e(\eps)$ and
$\|\cdot\|_{0,\eps}=\|\cdot\|_\eps$. For $g\in C_e(\eps)$, we consider first the ODE:
\begin{equation}\label{eq:odeg}
U''+(\cot \rho)U'+g=0
\end{equation}
with $U(0)=0$. This equation is equivalent to $((\sin\rho)U')'+(\sin\rho) g=0$ so the
solution must be $A(g)$ defined by
$$
A(g)(\rho)=\begin{cases}-\int_0^\rho\frac1{\sin s}\int_0^s(\sin x)g(x)dxds&\text{if }\rho\neq 0\\[3mm]
0&\text{if } \rho=0\end{cases}
$$
\begin{lemma}\label{lem:solode2}
The map $A$ has the following properties.
\begin{itemize}
\item[$(i)$] $A: C_e(\eps)\to C_e^2(\eps)$ is linear and continuous.
\item[$(ii)$] $A(g)$ solves $U''+(\cot \rho)U'+g=0$ on $[\eps,\eps]$.
\item[$(iii)$] If $g\in C_e^1(\eps)$, then $A(g)\in C_e^3(\eps)$.
\end{itemize}
\end{lemma}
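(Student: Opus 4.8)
The plan is to read everything off the explicit formula for $A(g)$. Write $h_g(\rho)=\int_0^\rho(\sin x)\,g(x)\,dx$, so that $A(g)(\rho)=-\int_0^\rho h_g(s)/\sin s\,ds$ for $\rho\neq 0$; the inner factor is $O(s)$ near $s=0$, hence this integral is well defined and continuous on $[-\eps,\eps]$, and differentiating gives
$$
A(g)'(\rho)=-\frac{h_g(\rho)}{\sin\rho},\qquad A(g)''(\rho)=-g(\rho)+\frac{\cos\rho}{\sin^2\rho}\,h_g(\rho)\qquad(\rho\neq 0).
$$
In particular $\big((\sin\rho)A(g)'\big)'=-(\sin\rho)g$ on $(-\eps,\eps)\setminus\{0\}$, which is exactly $(ii)$ there; granting that $A(g)$ is $C^2$ across $0$, item $(ii)$ holds on all of $[-\eps,\eps]$, linearity of $A$ is obvious, and only the regularity claims and the norm bound remain.

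For the bound in $(i)$, I would estimate $|h_g(\rho)|\le(1-\cos|\rho|)\,\|g\|_\eps$ and use the elementary identities $\tfrac{1-\cos s}{\sin s}=\tan\tfrac s2$ and $\tfrac{1-\cos\rho}{\sin^2\rho}=\tfrac1{1+\cos\rho}$, both bounded on $[-\eps,\eps]$ since $\eps<\pi$; this yields $\|A(g)\|_{2,\eps}\le C_\eps\|g\|_\eps$ with an explicit $C_\eps$ (for instance $\|A(g)\|_\eps\le(-2\ln\cos\tfrac\eps2)\|g\|_\eps$, $\|A(g)'\|_\eps\le(\tan\tfrac\eps2)\|g\|_\eps$). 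Evenness of $A(g)$ follows from the substitution $s\mapsto-s,\ x\mapsto-x$ in the double integral, using that $g$ is even and $\sin$ is odd (equivalently, $h_g$ is even).

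The only real work is at $\rho=0$. Using just the continuity of $g$, $h_g(\rho)=g(0)(1-\cos\rho)+\int_0^\rho(\sin x)(g(x)-g(0))\,dx=\tfrac{g(0)}2\rho^2+o(\rho^2)$, whence $A(g)'(\rho)\to 0$ and $A(g)''(\rho)\to-\tfrac{g(0)}2$ as $\rho\to 0$. These limits are the genuine derivatives at $0$ by the mean value theorem: a continuous function that is $C^1$ off $0$ and whose derivative extends continuously across $0$ is $C^1$ there with that value. Applying this to $A(g)$ and then to $A(g)'$ gives $A(g)\in C_e^2(\eps)$ with $A(g)'(0)=0$, $A(g)''(0)=-\tfrac{g(0)}2$, proving $(i)$ and $(ii)$. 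For $(iii)$, suppose $g\in C_e^1(\eps)$. Away from $0$, $h_g\in C^2$ forces $A(g)'=-h_g/\sin\rho\in C^2$, hence $A(g)\in C^3$ there, so only $\rho=0$ is at issue; differentiating once more,
$$
A(g)'''(\rho)=-g'(\rho)+\frac{\cos\rho}{\sin\rho}\,g(\rho)-\frac{1+\cos^2\rho}{\sin^3\rho}\,h_g(\rho)\qquad(\rho\neq 0),
$$
and if $A(g)'''(\rho)\to-g'(0)$ as $\rho\to 0$, the same mean value argument gives $A(g)\in C_e^3(\eps)$.

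The main obstacle is exactly this last limit: the two terms $\frac{\cos\rho}{\sin\rho}g(\rho)$ and $\frac{1+\cos^2\rho}{\sin^3\rho}h_g(\rho)$ are each $\sim\pm g(0)/\rho$, so they must be treated jointly. Since $g$ is even and $C^1$ we have $g'(0)=0$, which yields both the continuity of $\rho\mapsto\frac{g(\rho)-g(0)}\rho=\int_0^1 g'(\rho t)\,dt$ (value $0$ at $0$) and the sharper expansion $h_g(\rho)=\tfrac{g(0)}2\rho^2+o(\rho^3)$. Splitting
$$
A(g)'''(\rho)=-g'(\rho)+\frac{\rho\cos\rho}{\sin\rho}\cdot\frac{g(\rho)-g(0)}\rho+g(0)\Big(\cot\rho-\frac{(1+\cos^2\rho)\rho^2}{2\sin^3\rho}\Big)-\frac{1+\cos^2\rho}{\sin^3\rho}\Big(h_g(\rho)-\tfrac{g(0)}2\rho^2\Big),
$$
the first, second and fourth terms tend to $0$ (the fourth because its first factor is $O(1/\rho)$ while the second is $o(\rho^3)$), and the third tends to $0$ because $\cot\rho$ and $\frac{(1+\cos^2\rho)\rho^2}{2\sin^3\rho}$ both equal $\frac1\rho+O(\rho)$, so their difference is $O(\rho)$ — an elementary Taylor expansion. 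Note that a shortcut via a naive division lemma does not avoid this: writing $h_g=\rho^2 m$ only gives $m$ continuous with $m(0)=g(0)/2\neq 0$, which is not enough to bound $A(g)'''$; the two–term expansion of $h_g$, i.e. the hypothesis $g'(0)=0$, is genuinely needed.
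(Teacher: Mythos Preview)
Your proof is correct and follows the same overall strategy as the paper: write down the explicit derivatives of $A(g)$ and analyze the singular point $\rho=0$. The estimates you quote for $\|A(g)\|_\eps$ and $\|A(g)'\|_\eps$ are exactly those in the paper.

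The execution differs in two places. First, for regularity at $0$ in $(i)$ you invoke the standard ``continuous extension of the derivative implies differentiability'' consequence of the mean value theorem, whereas the paper proceeds by direct estimation of each derivative against $\|g-g(0)\|_{|\rho|}$. Second, and more substantively, for $(iii)$ the paper handles the singular combination $\cot\rho\,g(\rho)-\frac{2\cot^2\rho}{\sin\rho}\int_0^\rho(\sin x)g(x)\,dx$ by an integration by parts on the integral term, which produces explicit bounds in terms of $\|g\|_{|\rho|}$ and $\|g'\|_{|\rho|}$ and hence yields continuity of $A:C_e^1(\eps)\to C_e^3(\eps)$ (slightly more than the lemma states). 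Your route---subtracting the leading Taylor coefficients and using $h_g(\rho)=\tfrac{g(0)}2\rho^2+o(\rho^3)$ from $g'(0)=0$---is cleaner for proving mere membership in $C_e^3(\eps)$, which is all the statement asks, but does not directly give the operator norm bound. Both approaches hinge on the same observation you emphasize: the evenness hypothesis enters through $g'(0)=0$, without which the two $\sim 1/\rho$ terms in $A(g)'''$ would not cancel.
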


\begin{proof}
Let us notice that $A(g)$ is clearly $C^2$ on $[-\eps,\eps]\setminus\{0\}$. Moreover we have
\begin{equation}\label{eq:estim}
\begin{split}
|A(g)(\rho)| & \le\int_0^{|\rho|}\frac1{\sin s}\int_0^s(\sin x)\|g\|_{|\rho|}dxds\\
 & \le\|g\|_{|\rho|}\int_0^{|\rho|}\frac{\sin\frac s2}{\cos\frac s2}ds 
  \le 2|\ln \cos\frac {|\rho|}2| \|g\|_{|\rho|}.
\end{split}
\end{equation}
This proves that $A(g)$ is continuous at $0$ and $A$ is a continuous linear map with image in $C_e(\eps)$.

We have 
$$A(g)'(\rho)=\frac{-1}{\sin \rho}\int_0^\rho(\sin x)g(x)dx $$and hence
$$
|A(g)'(\rho)|\le\frac1{\sin |\rho|}\int_0^{|\rho|}(\sin x)\|g\|_{|\rho|}dx
\le\tan\frac {|\rho|}2\|g\|_{|\rho|},
$$so $A(g)$ is $C^1$ with $A(g)'(0)=0$ and $A$ is continuous with image in $C_e^1(\eps)$. 

We also have
$$A(g)''(\rho)=\frac{\cos\rho}{\sin^2\rho}\int_0^{\rho}(\sin x)g(x)dx-g(\rho) ,$$
therefore $A(g)$ solves \eqref{eq:odeg}. Besides it holds
\begin{align*}
|A(g)''(\rho)+(1-\frac{\cos \rho}{2\cos^2\frac\rho 2})g(0)| &
\le\frac{\cos\rho}{\sin^2\rho}\int_0^{|\rho|}(\sin 
x)\|g-g(0)\|_{|\rho|}dx+\|g-g(0)\|_{|\rho|} \\
& \le(1+\frac{\cos \rho}{2\cos^2\frac\rho 2})\|g-g(0)\|_{|\rho|},
\end{align*}
thus $A(g)$ is $C^2$ with $A(g)''(0)=-\frac12g(0)$ and $A$ is continuous with image in $C_e^2(\eps)$. 

If $g$ is $C^1$ we have 
$$A(g)'''(\rho)=\frac{-1}{\sin \rho}\int_0^\rho(\sin x)g(x)dx-\frac{2\cot^2
\rho}{\sin\rho}\int_0^\rho(\sin x)g(x)dx+\cot \rho g(\rho)-g'(\rho) ,$$
so, integrating by part and $g'(0)=0$, we obtain
\begin{equation*}
\begin{split}
\left|\cot \rho g(\rho)-\frac{2\cot^2 \rho}{\sin\rho}\int_0^\rho(\sin x)g(x)dx\right|&\le
\left|\cot\rho(1-\frac{2\cos\rho(1-\cos \rho)}{\sin^2 \rho})g(0)\right|+
\left|\cot\rho\int_0^\rho g'(x)dx\right|\\
&\qquad\qquad+\left|\frac{2\cot^2\rho}{\sin\rho}\int_0^\rho(1-\cos x)g'(x)dx\right|\\
&\le\frac{\cos\rho\sin\frac{|\rho|}2}{2\cos^3\frac\rho2}|g(0)|+\rho\cot\rho\|g'\|_{|\rho|} \\
 & \qquad + \left|\frac{2\cot^2\rho(\rho-\sin\rho)}{\sin\rho}\right|\|g'\|_{|\rho|}.
\end{split}
\end{equation*}
This implies
$$
|A(g)'''(\rho)|\le(\tan\frac{|\rho|}2+\frac{\cos\rho\sin\frac{|\rho|}2}{2\cos^3\frac\rho2})\|g\|_{|\rho|}
+ \Big(1+\rho\cot\rho+\left|\frac{2\cot^2\rho(\rho-\sin\rho)}{\sin\rho}\right|\Big)\|g'\|_{|\rho|},
$$
then $A(g)$ is $C^3$ and $A$ is continuous as a map from $C_e^1(\eps)$ to $C_e^3(\eps)$.
Thus $(i)$, $(ii)$ and $(iii)$ are proved.
\end{proof}

Let us now construct a solution to \eqref{eq:ode2}.

\begin{lemma}\label{lem:solode}
Let us assume that $f$ is $C^1$. For any $t\in\R$, there is a unique even solution $U_t$
to \eqref{eq:ode2} on $(-\pi,\pi)$ such that $U_t(0)=t$. Moreover the solution is $C^1$ in
$t$.
\end{lemma}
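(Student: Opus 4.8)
The plan is to solve the ODE \eqref{eq:ode2} with the prescribed initial condition $U(0)=t$ by first reducing it to a fixed-point problem on a small interval $[-\eps,\eps]$ using the integral operator $A$ from Lemma~\ref{lem:solode2}, and then extending the solution to all of $(-\pi,\pi)$ by the standard theory of ODEs (away from the singular point $\rho=0$). The key point is that the only obstruction to classical ODE theory is the singularity of the coefficient $\cot\rho$ at $\rho=0$, so that is where the operator $A$ is needed; once one has a genuine $C^2$ even solution on a neighborhood of $0$, the equation $U''+(\cot\rho)U'+f(U)=0$ is a regular second-order ODE on $(0,\pi)$ and on $(-\pi,\pi)$ away from $0$, and continuation plus uniqueness of Cauchy problems finishes the construction and the symmetry.

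First I would set up the fixed-point argument. Writing $U=t+W$ where $W(0)=0$, equation \eqref{eq:ode2} becomes $W''+(\cot\rho)W'+f(t+W)=0$, which by Lemma~\ref{lem:solode2}$(ii)$ is equivalent to $W=A\big(f(t+W)\big)=:\Psi_t(W)$, since $A(g)$ is the unique $C^2$ solution of $U''+(\cot\rho)U'+g=0$ with $U(0)=0$ (uniqueness here is the usual one for the regularized equation $((\sin\rho)U')'=-(\sin\rho)g$). I would work in the Banach space $C_e^2(\eps)$ and show that for $\eps$ small enough $\Psi_t$ maps a suitable closed ball into itself and is a contraction: by Lemma~\ref{lem:solode2}$(i)$, $\|A\|$ is bounded uniformly as $\eps\to0$ (indeed the estimates \eqref{eq:estim} etc.\ give operator norm $O(\eps)$ into $C_e^2(\eps)$ once one is careful, or at worst $O(|\ln\cos(\eps/2)|)$, which still $\to0$), and $g\mapsto f(t+g)$ is Lipschitz on bounded sets in $C_e^0(\eps)$ since $f$ is $C^1$. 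Hence for small $\eps$ (depending on $t$, but locally uniformly in $t$) there is a unique fixed point $W=W_t\in C_e^2(\eps)$; this gives the unique even $C^2$ solution near $0$. By Lemma~\ref{lem:solode2}$(iii)$, since $f(t+W_t)\in C_e^1(\eps)$ (as $f\in C^1$ and $W_t\in C^2$), in fact $W_t\in C_e^3(\eps)$, so $U_t$ is $C^3$ near $0$.

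Next I would extend $U_t$ to $(-\pi,\pi)$. On $(0,\pi)$ the equation is a regular ODE, so starting from the Cauchy data $(U_t(\eps),U_t'(\eps))$ the solution extends uniquely and stays $C^2$ (indeed $C^3$, by bootstrapping using $f\in C^1$) as long as it does not blow up; but since $f$ is $C^1$ hence locally Lipschitz and $\cot\rho$ is smooth on $(0,\pi)$, and one can rule out blow-up in finite $\rho<\pi$ by the usual continuation argument (any solution exists up to the boundary of the maximal interval), we get a solution on all of $(0,\pi)$, and then on $(-\pi,0)$ by evenness, $U_t(-\rho):=U_t(\rho)$; the two pieces match in a $C^2$ (in fact $C^3$) fashion at $0$ because $U_t$ is even and $C^3$ there. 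Uniqueness of the even solution with $U_t(0)=t$ follows from uniqueness of the fixed point near $0$ together with uniqueness for the regular Cauchy problem away from $0$. Finally, $C^1$ dependence on $t$: near $0$ it is automatic, since $\Psi_t(W)$ depends $C^1$ on $(t,W)$ (because $f\in C^1$) and the implicit function theorem applies to $W-\Psi_t(W)=0$ at a point where $I-D_W\Psi_t$ is invertible (it is, being $I$ minus a contraction's differential for small $\eps$); away from $0$, $C^1$ dependence of solutions of regular ODEs on initial conditions — which themselves depend $C^1$ on $t$ — gives the conclusion on $(-\pi,\pi)$.

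The main obstacle is making the contraction estimate genuinely uniform in $t$ over compact sets and checking that the $\eps$ can be chosen locally uniformly, so that the $C^1$-in-$t$ statement survives the passage from the local (fixed-point) solution to the globally continued one; this is routine but requires care with the constants coming from \eqref{eq:estim} and the Lipschitz bounds on $f$. A secondary technical point is verifying that the fixed point in $C_e^2$, a priori only $C^2$, really is a classical $C^2$ solution of \eqref{eq:ode2} on $(-\pi,\pi)\setminus\{0\}$ in the sense defined just before the lemma — this is immediate from Lemma~\ref{lem:solode2}$(ii)$.
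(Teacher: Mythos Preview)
Your approach is essentially the same as the paper's: reformulate the equation near $\rho=0$ as a fixed-point problem $W=A(f(t+W))$ using the operator $A$ of Lemma~\ref{lem:solode2}, apply the contraction mapping theorem for small $\eps$, extend away from $0$ by regular ODE theory, and deduce $C^1$ dependence on $t$ via the Implicit Function Theorem. The only cosmetic difference is that the paper runs the contraction in $C_e(\eps)=C_e^0(\eps)$ and then bootstraps regularity via Lemma~\ref{lem:solode2}, whereas you work directly in $C_e^2(\eps)$; both are fine, and the paper, like you, does not dwell on the global continuation to $(-\pi,\pi)$ (your parenthetical ``usual continuation argument'' does not by itself exclude blow-up before $\pi$, but the paper glosses over this point as well).
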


\begin{proof}
Let us notice that the only problem is near $0$. So if we solve the problem on
$(-\eps,\eps)$, then the solution extends to $(-\pi,\pi)$ without any problem. 

If $U$ is an even solution to \eqref{eq:ode2}, then $((\sin\rho)U')'+(\sin\rho) f(U)=0$ and then
$$
U(\rho)-U(0)=-\int_0^\rho\frac1{\sin s}\int_0^s(\sin x)f(U(x))dxds=A(f\circ U)(\rho)\,.
$$

Let us then define, for $(v,t)\in C_e(\eps)\times\R$, the function $\mathcal A(v,t)$ in
$[-\eps,\eps]$ by $\mathcal A (v,t)=A(f(t+v))$. Then $U$ is a solution of \eqref{eq:ode2}
if $U-U(0)=\mathcal A (U-U(0),U(0))$. Observe that $\mathcal A(v,t)$ is continuous in
$[-\eps,\eps]$ and, by \eqref{eq:estim}, 
$$\|\mathcal A (v,t)\|_\eps\le 2 |\ln(\cos\frac\eps 2)|\sup_{[t-\|v\|_\eps,t+\|v\|_\eps]}|f| .$$
So $\mathcal A$ is a map from $C_e(\eps)\times\R$ to $C_e(\eps)$. Moreover for any $t_0\in
\R$, there is $\eps>0$ such that the ball $B_1\subset C_e(\eps)$ of radius $1$ and center
$0$ is stable by $\mathcal A (\cdot,t)$ for any $t$ close to $t_0$. By the above
description, solving \eqref{eq:ode2} with initial data $t$ consists then in finding fixed
points of $\mathcal A(\cdot,t)$.

Let us notice that the map $\mathcal A$ is $C^1$ and 
$$
D\mathcal A_{|(v,t)}(y,h)(\rho)=-\int_0^\rho\frac1{\sin s}\int_0^s(\sin x)f'(t+v(x))(y+h(x))dxds,
$$this implies that
$$
\|D\mathcal A_{|(v,t)|}\|\le 2|\ln(\cos\frac\eps2)|\sup_{[t-\|v\|_\eps,t+\|v\|_\eps]}|f'|.
$$
So, there is $\eps$ such that $B_1$ is stable by $\mathcal A(\cdot,t)$ and
$\mathcal A(\cdot,t)$ is a contraction on $B_1$. So, $\mathcal A(\cdot,t)$ has a unique
fixed point $v_t$ in $B_1$. \textit{A priori}, $t+v_t$ is just a weak solution to
\eqref{eq:ode2} ($t+v_t$ is just continuous). But Lemma~\ref{lem:solode2} tells that
$t+v_t$ is $C^2$ and a true solution of \eqref{eq:ode2}. Thus $t+v_t$ extends to
$(-\pi,\pi)$ as a solution of \eqref{eq:ode2} with initial value $t$. Moreover, since any
continuous function lies in $B_1$ if $\eps$ is sufficiently small, the function $t+v_t$ is
the unique solution to \eqref{eq:ode2} in $(-\pi,\pi)$ with initial value $t$. 

In order to prove that $v_t$ depends in a $C^1$ way of $t$, we apply the Implicit Function Theorem to the equation $\mathcal A(v_t,t)-v_t=0$. Let $t_0\in \R$ and $\eps>0$ such that $D_v \mathcal A_{|(v_{t_0},t_0)}$ is a contraction on $C_e(\eps)$. Then $D_v \mathcal A_{|(v_{t_0},t_0)}-\mathrm{id}$ is invertible from $C_e(\eps)$ to $C_e(\eps)$ and the Implicit Function Theorem applies.
\end{proof}

Let us remark that if $f$ is $C^1$ then item $(iii)$ in Lemma~\ref{lem:solode2} implies
that the solution constructed by the above lemma is $C^3$.


\subsubsection{The study of the solutions $U_t$}

We recall that we will use the solutions $U_t$ to produce rotationally symmetric
solutions to \eqref{eq:oep2}. If $u$ is a solution to \eqref{eq:oep2},
then, at the maximum point $\bar p$, the maximum principle implies $f(u(\bar p))>0$. So,
some positiveness assumption could be interesting on $f$. The second thing is that we want
to be sure that a solution $U_t$ vanishes somewhere in $(0,\pi)$ if $t>0$. We then have
the following lemma.

\begin{lemma}
Assume that $f$ is $C^1$ and positive on $\R_+^*$, then the solution $U_t$ given by
Lemma~\ref{lem:solode} with positive initial value $t$ vanishes at some $r_t\in(0,\pi)$,
we denote by $r_t$ the smallest positive zero of $U_t$. Moreover, if $r_t>\pi/2$,
$U_t$ is concave on $[\pi/2,r_t]$.
\end{lemma}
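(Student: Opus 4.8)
The plan is to analyze the ODE $((\sin\rho)U_t')' = -(\sin\rho)f(U_t)$ on $(0,\pi)$ and argue by contradiction that $U_t$ cannot stay positive all the way to $\pi$. Set $\phi(\rho) = (\sin\rho)U_t'(\rho)$, so that $\phi' = -(\sin\rho)f(U_t)$ and $\phi(0) = 0$. Since $U_t(0) = t > 0$ and $U_t$ is continuous, $U_t > 0$ on some maximal interval $[0,r_t)$ with $r_t \le \pi$; suppose for contradiction that $r_t = \pi$, i.e.\ $U_t > 0$ on all of $(0,\pi)$. On this interval $f(U_t) > 0$ because $f$ is positive on $\R_+^*$, hence $\phi$ is strictly decreasing on $(0,\pi)$. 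As $\phi(0)=0$ this forces $\phi < 0$, i.e.\ $U_t' < 0$, on $(0,\pi)$; in particular $U_t$ is strictly decreasing there and $U_t(\rho) \to \ell \ge 0$ as $\rho \to \pi^-$.

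First I would rule out $\ell > 0$. If $U_t$ stays above some $c>0$ on, say, $[\pi/2,\pi)$, then $f(U_t) \ge m := \min_{[c,t]} f > 0$ there, so $\phi(\rho) - \phi(\pi/2) = -\int_{\pi/2}^\rho (\sin s) f(U_t(s))\,ds \le -m\int_{\pi/2}^\rho \sin s\,ds$, which is bounded away from $0$; hence $U_t'(\rho) = \phi(\rho)/\sin\rho \to -\infty$ as $\rho\to\pi^-$ (the numerator stays negative and bounded away from $0$ while $\sin\rho \to 0^+$), contradicting $U_t$ converging to a finite limit. Therefore $\ell = 0$, i.e.\ $U_t(\rho)\to 0$ as $\rho\to\pi^-$. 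But then I claim this is still impossible: integrate once more. We have $U_t'(\rho) = \phi(\rho)/\sin\rho$ with $\phi$ negative and decreasing, so $\phi(\rho) \le \phi(3\pi/4) =: -\delta < 0$ for $\rho \ge 3\pi/4$, whence $U_t'(\rho) \le -\delta/\sin\rho$, and $\int_{3\pi/4}^{\pi} \frac{d\rho}{\sin\rho} = +\infty$. Integrating gives $U_t(\rho) \to -\infty$, contradicting $U_t \to 0$ (indeed contradicting $U_t > 0$). Hence $r_t < \pi$, and we let $r_t$ denote the smallest positive zero of $U_t$, which exists by the intermediate value theorem since $U_t$ is continuous, positive near $0$, and must vanish before $\pi$.

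For the concavity statement, suppose $r_t > \pi/2$ and work on $[\pi/2, r_t]$. There $U_t > 0$ (until the endpoint, where $U_t = 0$), so $f(U_t) \ge 0$ by continuity and positivity of $f$ on $\R_+^*$. On the other hand, by the first part $U_t' < 0$ on $(0,r_t)$, so in particular $U_t' \le 0$ on $[\pi/2, r_t]$; and on this interval $\cot\rho \le 0$. Reading the ODE as $U_t'' = -(\cot\rho)U_t' - f(U_t)$, both terms $-(\cot\rho)U_t'$ and $-f(U_t)$ are $\le 0$ on $[\pi/2, r_t]$ (the first is a product of two nonpositive quantities with a minus sign, the second is nonpositive), so $U_t'' \le 0$ there, which is the asserted concavity.

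The only genuinely delicate point is the nonexistence of a positive solution on all of $(0,\pi)$: one has to be careful that the apparent singularity of $\cot\rho$ at $\rho = \pi$ is handled correctly, and the cleanest route is exactly the substitution $\phi = (\sin\rho)U_t'$ together with the divergence of $\int^\pi d\rho/\sin\rho$, as above. Everything else is a direct reading of the sign structure of the ODE on $[\pi/2, r_t]$, using monotonicity of $U_t$ from the first part.
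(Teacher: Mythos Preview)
Your argument is correct and follows essentially the same route as the paper: rewrite the equation as $((\sin\rho)U_t')'=-(\sin\rho)f(U_t)$, deduce $U_t'<0$ while $U_t>0$, and then observe that $U_t'(\rho)=\phi(\rho)/\sin\rho$ has a numerator bounded away from zero near $\pi$ while $1/\sin\rho$ is non-integrable there, forcing $U_t\to-\infty$. The concavity part is identical to the paper's one-line computation $U_t''=-(\cot\rho)U_t'-f(U_t)\le 0$ on $[\pi/2,r_t]$.

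One small remark: your case split on $\ell>0$ versus $\ell=0$ is unnecessary, and the sentence ``$U_t'\to-\infty$ contradicts $U_t$ converging to a finite limit'' is not literally correct (think of $\sqrt{\pi-\rho}$). Fortunately your second argument---bounding $U_t'\le -\delta/\sin\rho$ and integrating---uses only $U_t>0$ on $(0,\pi)$ and already handles both cases at once, so the proof stands; you can simply delete the $\ell>0$ paragraph.
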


\begin{proof}
If $U_t$ never vanishes in $(0,\pi)$, then
$$
U_t'(\rho)=-\frac1{\sin\rho} \int_0^\rho\sin(x)f(U_t(x))dx<0\, ,
$$which implies that $U_t$ is decreasing and positive, and hence, it has a limit. But, for
$\rho $ close to $\pi$, it holds
$$U_t'(\rho)\sim_{\pi}-\frac1{\pi-\rho}\int_0^\pi\sin(x)f(U_t(x))dx ,$$which is not
integrable, a contradiction. Therefore, $r_t$ is well defined.

To finish the proof, observe that if $\rho\in[\pi/2,r_t]$ then $U_t''(\rho)=-\cot \rho
U_t'(\rho)-f(U_t(\rho))\le 0$, so $U_t$ is concave, as desired.
\end{proof}

\begin{remark}
Let us point out that since $U_t'(r_t)<0$, $r_t$ depends in a $C^1$ way of $t$.
\end{remark}

So let us assume that $f>0$ on $\R_+^*$. For any $p\in\s^2$ and $t>0$, we denote by
$D_{t,p}$ the disk of center $p$ and radius $r_t$. In $D_{t,p}$, the function $U_t$
define a radial function $u_{t,p}$ which is a solution to the OEP~\eqref{eq:oep2} in
$D_{t,p}$. Our aim is to prove that using some extra hypotheses on $f$ we can construct a
family of candidate solution from $(u_{t,p})_{(t,p)\in\R_+^*\times\s^2}$.

As in the case of \eqref{eq:oep3}, we need to go further in the description of the
solutions of \eqref{eq:ode2}. So for the rest of this section, we fix some extra
assumptions on $f$, the assumption is 

\begin{equation*}\tag{$H$}\label{eq:hypo}
\textrm{$f$ is a $C^1$ function so that  $f(x)>0$ and $f(x)\ge xf'(x)$ for any $x\in\R_+^*$.}
\end{equation*}

Since the family of functions $U_t$ is $C^1$ in $t$, we define
$H_t=\frac{\partial}{\partial t}U_t$. Observe that $H_t$ is a solution to 
\begin{equation}\label{eq:odeh}
H_t''+(\cot \rho)H_t'+f'(U_t)H_t=0,
\end{equation}
with $H_t(0)=1$. We use the notation $h_{t,p}$ to denote the rotationally symmetric
function in $D_{t,p}$ associated to $H_t$.

\begin{lemma}
Let $f$ be a function satisfying \eqref{eq:hypo}, then for any $t$, $H_t$ is positive in $(-r_t,r_t)$.
\end{lemma}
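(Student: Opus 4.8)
The plan is to argue by contradiction, comparing $H_t$ with $U_t$ through a Wronskian-type quantity and using the structural hypothesis $f(x)\ge xf'(x)$ from \eqref{eq:hypo}. Fix $t>0$. First I would put both ODEs in Sturm--Liouville form on $(0,\pi)$: from \eqref{eq:ode2} one has $((\sin\rho)U_t')'=-(\sin\rho)f(U_t)$, and from \eqref{eq:odeh} one has $((\sin\rho)H_t')'=-(\sin\rho)f'(U_t)H_t$. Then I would set
$$
w(\rho)=(\sin\rho)\big(U_t'(\rho)H_t(\rho)-U_t(\rho)H_t'(\rho)\big),
$$
and a direct differentiation using the two identities above gives
$$
w'(\rho)=(\sin\rho)\,H_t(\rho)\,\big(f'(U_t(\rho))U_t(\rho)-f(U_t(\rho))\big).
$$
Note that $w(0)=0$ simply because $\sin 0=0$ (no information on $U_t'(0)$ or $H_t'(0)$ is needed).

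Next, suppose for contradiction that $H_t$ vanishes somewhere in $(0,r_t)$ and let $\rho_0\in(0,r_t)$ be its smallest positive zero, so $H_t>0$ on $[0,\rho_0)$ and $H_t(\rho_0)=0$. On $[0,\rho_0]$ we have $U_t>0$ (by the definition of $r_t$ as the smallest positive zero of $U_t$) and $H_t\ge 0$; hypothesis \eqref{eq:hypo} then gives $f'(U_t)U_t-f(U_t)\le 0$ there, so $w'\le 0$ on $[0,\rho_0]$ and hence $w(\rho_0)\le w(0)=0$. On the other hand, $w(\rho_0)=-(\sin\rho_0)U_t(\rho_0)H_t'(\rho_0)$, and since $H_t>0$ on $[0,\rho_0)$ with $H_t(\rho_0)=0$ we have $H_t'(\rho_0)\le 0$, whence $w(\rho_0)\ge 0$. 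Therefore $w(\rho_0)=0$, and as $\sin\rho_0>0$ and $U_t(\rho_0)>0$ this forces $H_t'(\rho_0)=0$. But $H_t$ solves the linear second-order ODE \eqref{eq:odeh}, whose coefficients are continuous at $\rho_0\in(0,\pi)$, so $H_t(\rho_0)=H_t'(\rho_0)=0$ implies $H_t\equiv 0$, contradicting $H_t(0)=1$. Hence $H_t>0$ on $(0,r_t)$, and positivity on $(-r_t,0)$ follows because $H_t$ is even.

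I do not expect a genuine obstacle: the only delicate points are the sign of $w'$ (which is exactly where \eqref{eq:hypo} enters) and the behaviour at the endpoints — and the singularity of the equation at $\rho=0$ is harmless since the weight $\sin\rho$ makes $w(0)=0$ automatically. The borderline case in which $f(x)=xf'(x)$ on the range $(0,t]$ of $U_t$ (so $f$ is linear there) is not excluded anywhere above and need not be; alternatively, in that case one observes directly that $U_t/t$ and $H_t$ are both even solutions of the same linear equation with value $1$ at $0$, hence coincide, so $H_t=U_t/t>0$ on $(-r_t,r_t)$.
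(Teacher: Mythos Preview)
Your argument is correct. It is a clean Sturm--Wronskian comparison: the identity $w'=(\sin\rho)H_t\big(f'(U_t)U_t-f(U_t)\big)$ is right, the endpoint values $w(0)=0$ and $w(\rho_0)=-(\sin\rho_0)U_t(\rho_0)H_t'(\rho_0)$ are computed correctly, and the uniqueness argument for the linear ODE at the regular point $\rho_0$ finishes the contradiction.

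The paper takes a different route. It lifts the one-dimensional comparison to the two-dimensional disk $D_{\bar\rho}\subset\s^2$: with $h_{t,p}>0$ in $D_{\bar\rho}$ and vanishing on $\partial D_{\bar\rho}$, it forms $w=u_{t,p}-\delta h_{t,p}$ where $\delta=\min_{D_{\bar\rho}}u_{t,p}/h_{t,p}$, observes that $\Delta w+f'(u_{t,p})w=f'(u_{t,p})u_{t,p}-f(u_{t,p})\le 0$ by \eqref{eq:hypo}, and invokes the strong maximum principle to get $w\equiv 0$, contradicting $w>0$ on the boundary. Your approach is more elementary and self-contained (pure ODE, no PDE maximum principle), and it isolates exactly the role of \eqref{eq:hypo} in the sign of $w'$. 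The paper's approach, while heavier for this particular lemma, is consistent with the PDE methods used elsewhere in the section (notably in the proofs of Lemmas~\ref{lem:concav} and~\ref{lem:jacob}) and makes the analogy with eigenvalue monotonicity transparent. Both are standard comparison arguments; yours is the natural radial reduction of theirs.
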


\begin{proof}
Since $H_t(0)=1$, if the lemma were not true, there is $\bar \rho\in(0,r_t)$ such that
$H_t(\bar\rho)=0$ and $H_t(\rho)>0$ for $\rho\in(-\bar \rho,\bar \rho)$. Given $p\in\s^2$,
let $D_{\bar \rho}$ be the geodesic disk of center $p$ and radius $\bar\rho$. The function
$h_{t,p}$ is positive in $D_{\bar \rho}$ and vanishes on $\partial D_{\bar\rho}$.
$u_{p,t}$ is positive in $\overline D_{\bar\rho}$ so
$0<\delta=\min_{D_{\bar\rho}}\frac{u_{t,p}}{h_{t,p}}$ is well defined. Then the function
$w=u_{t,p}-\delta h_{t,p}$ is non negative in $D_{t,p}$ and vanishes at some point inside
it. By Hypothesis~\eqref{eq:hypo}, we have
$$
\Delta w+f'(u_{t,p})w=f'(u_{t,p})u_{t,p}-f(u_{t,p})\le 0.
$$
By the maximum principle, $w\equiv 0$ which is impossible since $w>0$ on $\partial
D_{\bar\rho}$.
\end{proof}

From this lemma we see that $U_t$ is increasing in $t$. We are now interested in the
behavior when $t$ goes to $+\infty$.

\begin{lemma}\label{lem:proper}
Let $f$ be a function satisfying \eqref{eq:hypo}. Let $t_n$ be a sequence going to
$+\infty$ and $\rho_n\in[0,r_{t_n}]$ then
$\|(U_{t_n}(\rho_n),U_{t_n}'(\rho_n))\|\to+\infty$. Actually, either $U_{t_n}(\rho_n)\to
+\infty$ or $U_{t_n}'(\rho_n)\to-\infty$.
\end{lemma}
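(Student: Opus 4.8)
I want to show that along the candidate solutions $U_{t_n}$ with $t_n\to+\infty$, the ``state vector'' $(U_{t_n}(\rho_n),U_{t_n}'(\rho_n))$ has norm going to infinity, uniformly over the choice of $\rho_n\in[0,r_{t_n}]$. The natural first reduction is to argue by contradiction: suppose that along a subsequence (not relabeled) one has $U_{t_n}(\rho_n)$ and $U_{t_n}'(\rho_n)$ both bounded, say by a constant $C$. Since $U_{t_n}$ is decreasing from $t_n$ to $0$ on $[0,r_{t_n}]$ (we already know $U_t'<0$ on $(0,r_t)$ and $U_t$ increasing in $t$), the value $U_{t_n}(\rho_n)$ lies in $[0,t_n]$; the assumption that it stays bounded forces $\rho_n$ to move to the right, staying in a region where $U_{t_n}$ is small. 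The idea is to translate the problem: look at the solution near $\rho_n$. Because $U_{t_n}(\rho_n)\le C$ and $U_{t_n}'(\rho_n)\ge -C$, and $\rho_n$ is bounded away from $0$ (indeed near $r_{t_n}$), the ODE \eqref{eq:ode2} has coefficients $\cot\rho$ that are controlled on a fixed-size interval around $\rho_n$ once we know $\rho_n$ stays in a compact subset of $(0,\pi)$; one first checks $\rho_n$ cannot escape to $0$ or to $\pi$ (it cannot go to $0$ since there $U_{t_n}\approx t_n\to\infty$, and it is $\le r_{t_n}<\pi$, but I must also rule out $r_{t_n}\to\pi$, or handle it).

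The heart of the argument is then a continuous-dependence / compactness step. On the interval $[\rho_n-\delta,\rho_n+\delta]$ (intersected with $(0,\pi)$) the pair $(U_{t_n},U_{t_n}')$ satisfies a first-order system $y'=F(\rho,y)$ with $F$ locally Lipschitz and, crucially, \emph{independent of $n$}; with bounded initial data at $\rho_n$ and coefficients bounded on a fixed compact $\rho$-interval, the solutions $U_{t_n}$ are uniformly bounded in $C^2$ on a fixed neighborhood of $\rho_n$ (possibly after passing to the subsequence along which $\rho_n\to\rho_\infty$). By Arzelà--Ascoli a subsequence converges in $C^1$ to a solution $U_\infty$ of the same ODE on a neighborhood of $\rho_\infty$, with $U_\infty(\rho_\infty)\le C$, $U_\infty'(\rho_\infty)\ge -C$, $U_\infty\ge 0$. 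But now recall that $U_{t_n}(\rho)\ge U_{t_n}(\rho_n)\ge$ nothing useful on the left — rather, for $\rho<\rho_n$ we have $U_{t_n}(\rho)>U_{t_n}(\rho_n)$ and in fact $U_{t_n}(0)=t_n\to\infty$. The contradiction I am aiming for: integrating the ODE from $0$ to $\rho_n$, or using $U_{t_n}(\rho_n)=t_n+A(f\circ U_{t_n})(\rho_n)$ together with the estimate \eqref{eq:estim} on $A$ — but that estimate only gives a bound in terms of $\sup|f\circ U_{t_n}|$, which blows up. So instead I would use monotonicity more carefully: for $\rho\in[0,\rho_n]$, $U_{t_n}(\rho)\ge U_{t_n}(\rho_n)$, so $f(U_{t_n}(\rho))$ is bounded \emph{below} away from $0$ only if $U_{t_n}(\rho_n)$ is bounded below — which I cannot assume. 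Hence I should track $U'$ rather than $U$: from $\bigl((\sin\rho)U_{t_n}'\bigr)' = -(\sin\rho)f(U_{t_n})<0$, the quantity $(\sin\rho)U_{t_n}'(\rho)$ is decreasing, so $(\sin\rho_n)U_{t_n}'(\rho_n)\le (\sin\rho_0)U_{t_n}'(\rho_0)$ for any fixed small $\rho_0$; and near $\rho_0$ small, $U_{t_n}'(\rho_0)\approx -\tfrac{\rho_0}{2}f(U_{t_n}(\rho_0))\approx -\tfrac{\rho_0}{2}f(t_n)\to-\infty$ if $f(t)\to\infty$.

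The subtle point, and the \textbf{main obstacle}, is precisely that \eqref{eq:hypo} does \emph{not} force $f(t)\to\infty$: the condition $f(x)\ge xf'(x)$, i.e. $(f(x)/x)'\le 0$, only says $f$ is sublinear in a scaling sense and $f$ could tend to a finite positive limit, or even to $0$ — wait, $f>0$ and $f/x$ decreasing does not prevent $f(t)\to 0$. So I cannot simply say $f(t_n)\to\infty$. The right way to exploit \eqref{eq:hypo} is through the companion function $H_t=\partial_t U_t>0$ and the already-established facts that $U_t$ is increasing in $t$ and that the maximum principle argument (comparing $u_{t,p}$ with $\delta h_{t,p}$) works. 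Concretely: if $U_{t_n}(\rho_n)$ stayed bounded, consider the rescaled or normalized picture and use that $H_{t_n}$ also solves a linear ODE; the concavity fact ($U_t$ concave on $[\pi/2,r_t]$) plus $U_t$ increasing in $t$ should let me pin the graphs $\rho\mapsto U_{t_n}(\rho)$ between geometric barriers. So the cleanest route: (i) if $\rho_n$ stays in a compact subset of $(0,\pi/2)$, use the above $((\sin\rho)U')'$ monotonicity together with lower bounds near a fixed $\rho_0$ where $U_{t_n}(\rho_0)$ \emph{does} go to $\infty$ (which holds because $U_{t_n}$ is increasing in $t$ at every fixed $\rho$, by the positivity of $H_t$ — this is the key place \eqref{eq:hypo} enters), forcing $U_{t_n}'(\rho_n)\to-\infty$; (ii) if $\rho_n$ enters $[\pi/2,r_{t_n}]$, use the concavity on that interval: $U_{t_n}$ lies above the chord from $(\pi/2,U_{t_n}(\pi/2))$ to $(r_{t_n},0)$ and below its tangent, and $U_{t_n}(\pi/2)\to\infty$ (again by $H_t>0$), which forces either $U_{t_n}(\rho_n)\to\infty$ or, if $\rho_n$ is near $r_{t_n}$, the slope $U_{t_n}'(\rho_n)$ to be at least as negative as the chord slope $-U_{t_n}(\pi/2)/(r_{t_n}-\pi/2)\to-\infty$. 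Either way $\|(U_{t_n}(\rho_n),U_{t_n}'(\rho_n))\|\to\infty$, and the dichotomy ``$U_{t_n}(\rho_n)\to+\infty$ or $U_{t_n}'(\rho_n)\to-\infty$'' falls out by passing to subsequences. I expect the delicate bookkeeping to be in case (i) near $\rho=0$, where the $\cot\rho$ singularity and the integral representation $U_{t_n}'(\rho)=-\tfrac{1}{\sin\rho}\int_0^\rho(\sin x)f(U_{t_n}(x))\,dx$ must be used to convert ``$U_{t_n}\to\infty$ pointwise on $(0,\pi)$'' into the quantitative statement ``$(\sin\rho_0)U_{t_n}'(\rho_0)\to-\infty$'' for some fixed small $\rho_0$ — which, combined with the monotonicity of $\rho\mapsto(\sin\rho)U_{t_n}'(\rho)$, propagates to all $\rho_n\ge\rho_0$.
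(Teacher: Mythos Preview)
Your overall architecture is close to the paper's: argue by contradiction, reduce the range $[\pi/2,r_{t_n}]$ to $[0,\pi/2]$ via the concavity of $U_{t_n}$ on $[\pi/2,r_{t_n}]$, and then exploit that $U_{t_n}$ must blow up pointwise near the origin. However, there is a genuine gap at the one step you treat as routine.

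\medskip

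\textbf{The gap.} You repeatedly invoke ``$U_{t_n}(\rho_0)\to+\infty$ at a fixed $\rho_0$ because $U_t$ is increasing in $t$ (by $H_t>0$)''. Monotone in $t$ does not imply unbounded: from $H_t>0$ on $(-r_t,r_t)$ you only get that $t\mapsto U_t(\rho_0)$ is increasing, hence has a limit in $(0,+\infty]$; nothing so far rules out a finite limit. Hypothesis~\eqref{eq:hypo} allows $f(x)\to 0$ as $x\to\infty$ (e.g.\ $f(x)=1/(1+x)$ satisfies $f>0$ and $f\ge xf'$), so one cannot simply say ``$f(U_{t_n})$ is large'' to force divergence, and the integral representation for $U_{t_n}'$ does not help without a lower bound on $f$ at large arguments. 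This is precisely the ``main obstacle'' you flagged earlier, but you then sidestepped it by substituting the unproven pointwise divergence claim.

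\medskip

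\textbf{How the paper closes this gap.} The paper supplies the missing quantitative lower bound by a barrier comparison with a first Dirichlet eigenfunction. Using $f(x)\le a+\lambda x$ on $\R_+$ (a consequence of \eqref{eq:hypo}), one chooses $\tilde\mu$ large so that $\tilde\mu x>a+\lambda x\ge f(x)$ for $x\ge 1$, takes $d_1$ so that $\tilde\mu$ is the first Dirichlet eigenvalue on the disk of radius $d_1$, and lets $W$ be the radial eigenfunction with $W(0)=1$. A maximum-principle comparison then gives $U_t\ge tW$ on $[-d_1,d_1]$ for all $t\ge 2$, hence $U_{t_n}(\rho)\to+\infty$ for every fixed $\rho\in(-d_1,d_1)$. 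The paper combines this with a Grönwall-type differential inequality for $\mathcal U_t=U_t^2+U_t'^2$ (using again $0\le f(x)\le a+\lambda x$) to propagate the assumed boundedness of $\mathcal U_{t_n}(\rho_n)$ to boundedness of $\mathcal U_{t_n}(\rho)$ at any fixed $\rho\in(0,\pi/2)$, yielding the contradiction. Your $(\sin\rho)U'$-monotonicity route would also reach a contradiction once $U_{t_n}(\rho_0)\to\infty$ is known, so the eigenfunction barrier is really the one missing ingredient in your plan.
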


\begin{proof}
Let us assume the lemma is false. For $t>0$, we denote $\mathcal
U_t(\rho)=U_t^2(\rho)+U_t'^2(\rho)$. So we can find $t_n\to+\infty$ and $\rho_n\in
[0,r_{t_n}]$ such that $(\mathcal U_{t_n}(\rho_n))_n$ is bounded. Since $r_t$ is non
decreasing, we can define
$r_\infty=\lim_{t\to\infty} r_t$.

Since $f$ satisfies \eqref{eq:hypo}, $f(x)\le f(1)x=\lambda x$ for $x\in[1,+\infty)$ and
there is $a>0$ such that $f(x)\le a+\lambda x$ on $\R_+$. If $\rho_n>\pi/2$, the concavity
of $U_{t_n}$ in $[\pi/2,\rho_n]$ implies that 
$$
0 \le U_{t_n}(\pi/2)\le U_{t_n}(\rho_n)+U_{t_n}'(\rho_n)(\pi/2-\rho_n) \text{ and }
U_{t_n}'(\rho_n)\le U_{t_n}'(\pi/2)\le 0 .
$$

Since $\mathcal U_{t_n}(\rho_n)$ is bounded, this implies $\mathcal U_{t_n}(\pi/2)$ is
bounded and we can always assume that $\rho_n\le \pi/2$. Using $0\le f(x)\le a+\lambda x$
on $\R_+$, for $0<\rho<\min(r_t,\pi/2)$, we have 
$$
-(1+\cot \rho)\mathcal U_t\le \mathcal U_t'=2U_tU_t'-((\cot\rho)U_t'-f(U_t))U_t'\le
a\sqrt{\mathcal U_t}+\lambda \mathcal U_t ,
$$
thus, since $\mathcal U_{t_n}(\rho_n)$ stays bounded, we have $(\mathcal U_{t_n}(\rho))_n$
is bounded for any $\rho\in (0,\min(r_\infty,\pi/2))$.

Let us now prove that $U_{t_n}(\rho)$ goes to $\infty$ for $\rho$ close to $0$ which is a
contradiction with the boundedness of $\mathcal U$. 

There is $d_0\in(0,r_\infty)$ such that $U_{2}(\rho)\ge 1$ for $|\rho|\le d_0$. From the
monotonicity $t\mapsto U_t$, we thus have $U_t(\rho)\ge 1$ for $|\rho|\le d_0$ and $t\ge
2$. Let $\mu>0$ be the first eigenvalue of $-\Delta$ in a geodesic disk of $\s^2$ of
radius $d_0$. Let $\tilde\mu\ge \mu$ such that $a+\lambda x< \tilde\mu x$ if $x\ge 1$. Let
$d_1\le d_0$ be the radius of a geodesic disk in $\s^2$ whose first eigenvalue is
$\tilde\mu$. 

The first eigenfunction in a disk of radius $d_1$ is a rotationally symmetric function
generated by a profile curve $W$ solving in $[-d_1,d_1]$ the equation
$$
W''+(\cot\rho)W'+\tilde \mu W=0
$$
with $W(0)=1$. Let us prove that for $t\ge 2$, $U_t\ge tW$ on $[-d_1,d_1]$. Notice that
$U_t\ge 1$ on $[-d_1,d_1]$, so $U_t> tW$ close to $-d_1$ and $d_1$. Moreover, at $\rho=0$, it holds
$$
U_t(0)=t=tW(0) \text{ and } U_t''(0)=-\frac12f(t)> -\frac{\tilde\mu}2t=tW''(0) .
$$
So $U_t-tW$ vanishes at $0$ and is positive near $0$. If $U_t-tW\ge 0$ were not true,
there is $d_2\in(0,d_1)$ such that $U_t(d_2)-tW(d_2)=0$. Let
$\delta=\sup_{[-d_2,d_2]}U_t/W$. Since $U_t-tW$ is positive near $0$, $\delta>t$. We have
$U_t-\delta W\le 0$ on $[-d_2,d_2]$, $U_t(d_2)-\delta W(d_2)< 0$ and there is some $d_3$
such that $U_t(d_3)-\delta W(d_3)=0$. At that value $d_3$ we obtain
\begin{align*}
0\ge (U_t-\delta W)''(d_3)&=-(\cot d_3)U_t'(d_3)-f(U_t)(d_3)+(\cot d_3)\delta
W'(d_3)+\tilde \mu \delta W(d_3)\\
&>-(\cot d_3)(U-\delta W)'(d_3)-\tilde\mu(U-\delta W)(d_3)=0 ,
\end{align*}
which gives us a contradiction and $U_t\ge tW$ on $[-d_1,d_1]$ as claimed.

Using this last inequality, we obtain $\lim U_{t_n}(\rho)=+\infty$ for any
$\rho\in(-d_1,d_1)$. This contradicts $\mathcal U_{t_n}(\rho)$ stays bounded as $n\to\infty$. The
proof is finished.
\end{proof}

We need one more lemma concerning $U_t$ and $H_t$ which is similar to Lemma~\ref{lem:concav}.

\begin{lemma}\label{lem:jacob}
Let $f$ be a function satisfying \eqref{eq:hypo}. If $t>0$ then $H_tU_t''-U_t'H_t'\neq 0$ on $[-r_t,r_t]$.
\end{lemma}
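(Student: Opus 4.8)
The plan is to mimic the structure of the proof of Lemma~\ref{lem:concav}, replacing the Killing-field argument by a direct use of the Wronskian-type quantity $W_t := H_tU_t'' - U_t'H_t'$ and the two known ODEs \eqref{eq:ode2} and \eqref{eq:odeh}. First I would compute the boundary/endpoint values: at $\rho=0$, using $U_t(0)=t$, $U_t'(0)=0$, $U_t''(0)=-\tfrac12 f(t)$, $H_t(0)=1$, $H_t'(0)=0$, one gets $W_t(0) = H_t(0)U_t''(0) - U_t'(0)H_t'(0) = -\tfrac12 f(t) < 0$ by \eqref{eq:hypo}. So $W_t$ is negative at the center, and it suffices to show $W_t$ does not vanish on $(0,r_t]$ (and, by evenness, on $[-r_t,0)$ as well, since both $U_t$ and $H_t$ are even so $W_t$ is even).

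Next I would derive a first-order ODE, or at least a differential inequality, for $W_t$. Using \eqref{eq:ode2} to replace $U_t'' = -(\cot\rho)U_t' - f(U_t)$ and \eqref{eq:odeh} to replace $H_t'' = -(\cot\rho)H_t' - f'(U_t)H_t$, a direct computation gives
\begin{equation*}
W_t = H_tU_t'' - U_t'H_t' = -(\cot\rho)(H_tU_t' - U_t'H_t) - f(U_t)H_t = -(\cot\rho)\big(H_tU_t' - U_t H_t'\big)' \cdot(\text{something}) - \ldots,
\end{equation*}
so it is cleaner to work instead with the genuine Wronskian $w_t := H_tU_t' - U_tH_t'$ of the pair $(U_t,H_t)$ (noting $H_t = \partial_t U_t$, so these are two solutions of the linear equation obtained by linearizing; indeed $H_t$ solves \eqref{eq:odeh}, but $U_t$ does \emph{not} solve \eqref{eq:odeh}). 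Because of that mismatch, the right object is not a Wronskian of two solutions of the same linear ODE; instead I would compute directly
\begin{equation*}
w_t' = H_tU_t'' - U_tH_t'' = H_t\big(-(\cot\rho)U_t' - f(U_t)\big) - U_t\big(-(\cot\rho)H_t' - f'(U_t)H_t\big) = -(\cot\rho)w_t - H_t\big(f(U_t) - U_t f'(U_t)\big).
\end{equation*}
By \eqref{eq:hypo} the bracket $f(U_t) - U_t f'(U_t) \ge 0$ on $(0,r_t)$ where $U_t>0$, and by the previous lemma $H_t>0$ there; hence $\big((\sin\rho)w_t\big)' = -(\sin\rho)H_t(f(U_t)-U_tf'(U_t)) \le 0$, so $(\sin\rho)w_t$ is non-increasing on $(0,r_t)$ and, since $w_t(0)=0$, we get $w_t \le 0$ on $(0,r_t)$. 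Then I would relate $W_t$ to $w_t$: differentiating $w_t$ once more and using the ODEs expresses $W_t = H_tU_t'' - U_t'H_t'$ in terms of $w_t$, $U_t$, $H_t$ — concretely $W_t = w_t' + (\text{terms})$ — but more to the point, since we have the sign $w_t\le 0$ and $w_t(0)=0$, a further integration should pin down the sign of $W_t$ itself; alternatively, observe $H_t W_t = H_t^2 U_t'' - U_t' H_t H_t' = H_t^2 U_t'' - \tfrac12 U_t' (H_t^2)'$ and $(U_t'/H_t)' = (U_t''H_t - U_t'H_t')/H_t^2 = W_t/H_t^2$, so $W_t$ is (up to the positive factor $H_t^2$) the derivative of $U_t'/H_t$; and $U_t'/H_t$ at $0$ equals $0$, while its derivative there is $W_t(0)/H_t(0)^2 = -\tfrac12 f(t) < 0$, so $U_t'/H_t$ is negative just after $0$. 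The claim $W_t\ne 0$ is then equivalent to $U_t'/H_t$ being strictly monotone, which I would extract from the $w_t$ computation: $\frac{d}{d\rho}(U_t'/H_t) = W_t/H_t^2$ and $W_t = (H_tU_t'' - U_t'H_t')$; feeding in the ODEs once more, $W_t = -(\cot\rho)w_t - f(U_t)H_t + \text{(correction)}$, and since $-(\cot\rho)w_t \le 0$ on $(0,\pi/2]$ (as $w_t\le 0$) and $-f(U_t)H_t<0$ throughout, we'd get $W_t<0$ on $(0,\pi/2]$; for $\rho\in(\pi/2,r_t)$ one uses concavity of $U_t$ there ($U_t''\le 0$) together with $H_t>0$ to see $H_tU_t''\le 0$, and $-U_t'H_t'$ needs to be controlled by sign of $H_t'$.

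The main obstacle I anticipate is precisely controlling the sign of $-U_t'H_t'$ on the outer region $(\pi/2, r_t)$, equivalently showing $W_t$ does not change sign there: on $(0,\pi/2]$ the cotangent term has a favorable sign and the argument is clean, but past $\pi/2$ the factor $\cot\rho$ flips and one must instead lean on the concavity of $U_t$ (established in the earlier lemma) and on monotonicity properties of $H_t$. If a direct sign chase fails, the robust fallback — and probably the cleanest route overall — is the same maximum-principle/eigenvalue argument used in Lemma~\ref{lem:concav}: suppose $W_t(\bar\rho)=0$ for some $\bar\rho\in(0,r_t)$, build from $U_t, H_t$ the function $v = \frac{w_t(\bar\rho)}{\text{(normalization)}}\,h_{t,p} - (\text{Killing-type combination})$ on the disk $D_{\bar\rho}$, observe $\Delta v + f'(u_{t,p})v = -H_t(f(U_t)-U_tf'(U_t)) \le 0$ by \eqref{eq:hypo}, check $v$ has the sign of $w_t(\bar\rho)$ on $\partial D_{\bar\rho}$ and vanishes at the center; then either a sign change produces a subdomain on which $\mu := $ (first eigenvalue with potential $f'(u_{t,p})$) is attained, contradicting that it is already attained on the larger $D_{t,p}$ (using positivity of $h_{t,p}$ from the preceding lemma), or $v\equiv 0$, which forces $U_t'/U_t \equiv H_t'/H_t$ hence $U_t \equiv tH_t$, contradicting $U_t(r_t)=0$ while $H_t(r_t)>0$. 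Finally, the endpoint $\rho=r_t$ is handled separately: there $U_t(r_t)=0$, so $W_t(r_t) = H_t(r_t)U_t''(r_t) = H_t(r_t)\big(-(\cot r_t)U_t'(r_t)\big)$ and, since $U_t'(r_t)<0$ and $H_t(r_t)>0$, this is nonzero precisely when $\cot r_t\ne 0$; if $r_t=\pi/2$ one instead uses $U_t''(\pi/2) = -f(U_t(\pi/2))<0$ directly, and if $r_t>\pi/2$ concavity gives $U_t''(r_t)\le 0$ with strict inequality from $f>0$. This closes all cases.
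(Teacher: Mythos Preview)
Your direct approach has several concrete errors and never actually closes. First, once you write $W_t=-(\cot\rho)w_t-f(U_t)H_t+(\text{correction})$ with correction $-H_t'\big[(\cot\rho)U_t+U_t'\big]$, the sign bookkeeping is wrong: on $(0,\pi/2]$ you have $w_t\le 0$ and $\cot\rho>0$, so $-(\cot\rho)w_t\ge 0$, not $\le 0$; the two displayed terms therefore compete rather than cooperate, and you still have the uncontrolled $H_t'$ factor. Second, your endpoint analysis at $\rho=r_t$ is incorrect: from $U_t(r_t)=0$ you cannot conclude $W_t(r_t)=H_t(r_t)U_t''(r_t)$, since $W_t=H_tU_t''-U_t'H_t'$ and $U_t'(r_t)\neq 0$; you have silently dropped the $-U_t'(r_t)H_t'(r_t)$ term. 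Third, your fallback is not a proof as written: the identity $\Delta v+f'(u_{t,p})v=-H_t\big(f(U_t)-U_tf'(U_t)\big)$ holds for no natural linear combination of $h_{t,p}$ and a Killing-derivative of $u_{t,p}$, and the rest of that paragraph does not specify $v$ well enough to run a maximum-principle argument.

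The paper's proof is precisely the Killing-field/Hopf-lemma argument you gesture toward in the fallback, with the explicit choice
\[
v(\rho,\theta)=\frac{U_t'(\bar\rho)}{H_t(\bar\rho)}\,H_t(\rho)-\cos\theta\,U_t'(\rho),
\]
which satisfies $\Delta v+f'(u_{t,p})v=0$ exactly (both summands solve the linearized equation); one then shows $v\le 0$ on $D_{\bar\rho}$ via a comparison with $h_{t,p}>0$ and applies the boundary point lemma at $\bar q$ to get $\frac{U_t'(\bar\rho)}{H_t(\bar\rho)}H_t'(\bar\rho)-U_t''(\bar\rho)>0$. The case $H_t(\bar\rho)=0$ (possible only at $\bar\rho=r_t$) is treated separately and trivially.

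Ironically, a clean direct ODE proof in the spirit of your first attempt does exist, but you missed the right integrating factor. Differentiating $W_t$ and using \eqref{eq:ode2}, \eqref{eq:odeh} gives
\[
W_t'=H_tU_t'''-U_t'H_t''=(\csc^2\rho)H_tU_t'-(\cot\rho)W_t,
\]
so $\big((\sin\rho)W_t\big)'=\dfrac{H_tU_t'}{\sin\rho}<0$ on $(0,r_t)$ (using $H_t>0$ from the previous lemma and $U_t'<0$). Since $(\sin\rho)W_t\to 0$ as $\rho\to 0^+$ and $\sin r_t>0$, this yields $W_t<0$ on $(0,r_t]$; together with $W_t(0)=-\tfrac12 f(t)<0$ and evenness of $W_t$, one gets $W_t<0$ on $[-r_t,r_t]$. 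This avoids the Killing-field machinery entirely.
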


\begin{proof}
First we see that $(H_tU_t''-U_t'H_t')(0)=U_t''(0)=-\frac12f(t)<0$. Consider $\bar
\rho\in(0,r_t]$. If $H_t(\bar \rho)=0$, then $H_t'(\bar \rho)\neq 0$ since $H_t$ is a non
vanishing solution of \eqref{eq:odeh} and $(H_tU_t''-U_t'H_t')(\bar \rho)=-(U_t'H_t')(\bar \rho)\neq
0$. So we can assume $H_t(\bar\rho)\neq 0$.

As in the proof of Lemma~\ref{lem:concav}, let $p=(0,0,1)\in \s^2$, $Y=(1,0,0)\in T_p\s^2$
and $\widetilde Y$ the Killing vectorfield $\widetilde Y=-\meta{q}{Y}p-\meta{p}{q}Y$. We
also consider the polar coordinates $(\rho,\theta)$ around $p$ and $\bar q$ the point with
polar coordinates $\rho=\bar\rho$ and $\theta=0$. Let $v$ be the function defined by 
$$
v=\frac{\meta{\widetilde Y(\bar q)}{ \nabla u_{t,p} (\bar q)}}{h_{t,p}(\bar q)}
h_{t,p}-\meta{\widetilde Y }{ \nabla u_{t,p}} \text{ in } D_p ,
$$
such that $v(\bar q)=0$ (recall that $h_{t,p}(\bar q)\neq 0 $ since $H_t(\bar \rho)\neq
0$). Since $\widetilde Y$ is Killing, we have $\Delta v+f'(u_{t,p})v=0$. 

Using polar coordinates, we have $v(\rho,\theta)=\frac{U_t'(\bar \rho)}{H_t(\bar
\rho)}H_t(\rho)-\cos\theta U_t'(\rho)$. Let $D_{\bar \rho}$ denote the disk of radius
$\bar \rho$ and center $p$. Since $U_t'(\bar \rho)<0$, the maximum of $v$ on $\partial
D_{\bar \rho}$ is reached at $\theta=0$ and $v\le 0$ on $\partial D_{\bar \rho}$.

Let us prove $v\le 0$ in $D_{\bar\rho}$. If this were not the case, we consider
$\Ome=\{q\in D_{\bar \rho} \, | \, \, v(q)>0\}$. Since $h_{t,p}>0$ in $D_{\bar \rho}$,
$0<\delta=\min_\Ome \frac {h_{t,p}} v$ is well defined and satisfies $h_{t,p}-\delta v\ge
0$ in $\Ome$ and $h_{t,p}-\delta v$ vanishes at some point in $\Ome$. Since
$h_{t,p}-\delta v$ solves $\Delta u+f'(u_t)u=0$, the maximum principle gives
$h_{t,p}-\delta v\equiv 0$ in $\Ome$ but $h_{t,p}-\delta v>0$ on $\partial \Ome$. We get
our contradiction and $v\le0$ in $D_{\bar\rho}$. 

By the boundary maximum principle this implies either $v\equiv 0$ in $D_{\bar \rho}$ or
$\frac{\partial}{\partial \vec\eta}v(\bar q)>0$. The first case is impossible since
$v(p)=\frac{U_t'(\bar \rho)}{H_t(\bar\rho)}\neq 0$. The second case implies
$0<\frac{\partial}{\partial \vec\eta}v(\bar q)=\frac{U_t'(\bar \rho)}{H_t(\bar\rho)}H_t'(\bar
\rho)-U_t''(\bar\rho)$ and the lemma is proved.
\end{proof}


\subsubsection{The family of candidate solutions}

Now we are interested in finding a good reparametrization of
$(u_{t,p})_{(t,p)\in\R_+^*\times\s^2}$. More precisely we are going to prove the main
result of this section.
\begin{proposition}\label{prop:candfam}
Let us assume that $f$ satisfies \eqref{eq:hypo}. Then \eqref{eq:oep2} admits a family of
candidate solutions.
\end{proposition}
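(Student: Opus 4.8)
The plan is to follow exactly the same strategy used in Subsection~\ref{Sub:Ex1} for Example~1, replacing the explicit log-concavity input by Lemma~\ref{lem:jacob} and using Lemma~\ref{lem:proper} to control the behaviour at infinity. Concretely, for each $t>0$ we have the profile $U_t$ defined on $(-\pi,\pi)$ with first zero $r_t$, and the radial solution $u_{t,p}$ on the disk $D_{t,p}$ of center $p$ and radius $r_t$. The naive family $(u_{t,p})_{(t,p)\in\R_+^*\times\s^2}$ is a smooth family of solutions of \eqref{eq:oep2} (smoothness in $t$ comes from Lemma~\ref{lem:solode} and the remark after it, together with the $C^1$ dependence of $r_t$ on $t$ noted in the Remark), but it is not parametrized the way Definition~\ref{Def:Candidate} requires. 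So the real content is the reparametrization: given $(q,w,a)\in N=(T\s^2\times\R_+)\setminus\{(q,0,0)\}$, we must recover $t$ and the center $p$ from the data $v(q)=a$, $\nabla v(q)=w$ at the single point $q$.

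First I would set up the planar curve $\gamma_t(\rho)=(U_t(\rho),U_t'(\rho))\in\R^2$ for $\rho\in(-r_t,r_t)$ (extended evenly, so $U_t(-\rho)=U_t(\rho)$, $U_t'(-\rho)=-U_t'(\rho)$). As in Example~1, since $\nabla u_{t,p}(q)=tU_t'(\rho)\partial_\rho$ has been replaced by the $U_t$ normalization, the pair $(u_{t,p}(q),-|\nabla u_{t,p}|(q))=(U_t(\rho),U_t'(\rho))$ if $\rho\ge 0$, so knowing $(a,-|w|)$ with $a>0$ is the same as knowing a point on $\bigcup_t\gamma_t$. The key point is that the global map
$$
F:\ \{(t,\rho)\,:\,t>0,\ \rho\in(-r_t,r_t)\}\ \longrightarrow\ \R\times\R_-,\qquad (t,\rho)\mapsto (U_t(\rho),U_t'(\rho))
$$
is a $C^1$ diffeomorphism onto its image. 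Injectivity with fixed $t$ follows because $U_t$ is strictly decreasing on $(0,r_t)$ (its derivative is $-\frac{1}{\sin\rho}\int_0^\rho\sin(x)f(U_t(x))dx<0$), so $\rho\mapsto U_t(\rho)$ already separates points; that $F$ has invertible differential is exactly the statement $\partial_\rho U_t\cdot\partial_t U_t'-\partial_\rho U_t'\cdot\partial_t U_t=U_t'(H_tU_t''-U_t'H_t')\neq 0$ — wait, more precisely the Jacobian determinant of $F$ is $U_t'H_t'-U_t'' H_t$ up to the factor coming from the two columns $(U_t',U_t'')$ and $(H_t,H_t')$, which is nonzero by Lemma~\ref{lem:jacob} on $[-r_t,r_t]$, except that at $\rho$ where $U_t'$ appears one must be slightly careful near $\rho=0$ where $U_t'(0)=0$; there one uses $U_t''(0)=-\tfrac12 f(t)\neq 0$ and $H_t(0)=1$ to see $F$ is still a local diffeomorphism (the argument is the same local computation as the ``near $w=0$'' expansion in Example~1). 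Surjectivity of $F$ onto the relevant region, and in particular that every admissible $(a,b)$ with $a>0$, $b<0$ (or $a>0,b=0$) is attained, is where Lemma~\ref{lem:proper} enters: as $t\to\infty$ the curves $\gamma_t$ escape to infinity (either $U_t(\rho)\to\infty$ or $U_t'(\rho)\to-\infty$), so the images $\gamma_t$ sweep out the whole quadrant-type region, while as $t\to 0^+$ they shrink to the origin.

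Once $F^{-1}=(T,R)$ is established to be $C^1$, the reparametrization is built exactly as in Example~1: set $\widetilde N=\{((q,w),a)\in T\s^2\times\R\,:\,(a,-|w|)\in \operatorname{Im}F\ \text{(or }a=|w|=0\text{)}\}$, let $t(q,w,a)=T(a,-|w|)$, recover the radius $\rho=R(a,-|w|)$, put the center at $p=G(q,w,a)=\exp_q\!\big(\tfrac{R(a,-|w|)}{|w|}w\big)$, and define $v_{q,w,a}=u_{t(q,w,a),\,p(q,w,a)}$. Properties (a)--(d) of Definition~\ref{Def:Candidate} are then immediate from the construction: (b) and (c) because $F$ was inverted precisely to match $(a,-|w|)$ with the profile value and derivative at the point at distance $R$ from the center, (a) because that point lies in the closed disk, and (d) because the whole radial solution is determined by the triple $(U_t(\rho),U_t'(\rho),\text{center direction})$ at any interior point, i.e. the parametrization is consistent along a given solution. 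The $C^1$-smoothness of $(q,w,a)\mapsto v_{q,w,a}$ as a smooth family of $C^3$ functions follows from the $C^1$-regularity of $T$, $R$, $G$ (including at $w=0$, handled by the same Taylor expansion $T(a,-|w|)=a+O(|w|^2)$, $\frac{R(a,-|w|)}{|w|}w$ being $C^1$ at $w=0$, as in Example~1, now using $U_t''(0)=-\tfrac12 f(t)$ in place of $-\lambda/2$) together with the $C^3$, $C^1$-in-$t$ regularity of the profiles $U_t$.

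The main obstacle I anticipate is precisely proving that $F$ is a global $C^1$ diffeomorphism onto an open set of the right shape — i.e. combining the pointwise non-degeneracy from Lemma~\ref{lem:jacob}, the monotonicity of $U_t$ in $\rho$ and the monotonicity of $U_t$ in $t$ (from the positivity of $H_t$), and the properness statement of Lemma~\ref{lem:proper}, into a clean injectivity-plus-surjectivity argument; the behaviour near $\rho=0$ and near $w=0$ requires the same delicate first-order expansion as in Example~1, and one must check that the image of $F$ is exactly the region of $(a,b)$-values that can occur, so that $\widetilde N$ is genuinely a full neighborhood structure covering all of $N$ (with $a\ge 0$). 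Everything else is a faithful repetition of the Example~1 argument with $\lambda t$ replaced by the general profile.
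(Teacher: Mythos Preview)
Your approach is exactly the paper's: reparametrize $(u_{t,p})$ via $F(t,\rho)=(U_t(\rho),U_t'(\rho))$, using Lemma~\ref{lem:jacob} for non-degeneracy and Lemma~\ref{lem:proper} for properness, then copy the Example~1 construction. Two points need correcting or completing. First, the Jacobian of $F$ is exactly $H_tU_t''-H_t'U_t'$, with no extra $U_t'$ factor; Lemma~\ref{lem:jacob} gives non-vanishing on all of $[-r_t,r_t]$, so there is no separate issue at $\rho=0$. For global injectivity on $\Delta=\{(t,\rho):t>0,\ |\rho|<r_t\}$, arguing only for fixed $t$ is not enough. The paper's device is to show $F:\Delta\to\R_+^*\times\R$ is proper (Lemma~\ref{lem:proper} handles $t\to\infty$; the cases $t\to 0$ and $\rho\to\pm r_t$ are elementary), hence a covering map, hence a diffeomorphism since the target is simply connected.

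The more serious omission is the boundary case $a=0$. Your $F$ lives on the open $\Delta$, whose image is $\{a>0\}$; but Definition~\ref{Def:Candidate} requires $q\in\overline\Omega_{q,w,a}$ (so $a=0$ must be allowed), and the smooth-family definition requires a $C^1$ extension of $\Phi$ to an open neighborhood of $A$. One must therefore extend $F$ to a diffeomorphism on an open neighborhood $\widetilde\Delta$ of $\overline\Delta$ in $\R_+^*\times(-\pi,\pi)$. The paper first checks that $F$ is bijective on $\partial\Delta$ via $\partial_t\big(U_t'(r_t)\big)=H_t'(r_t)-\tfrac{H_t(r_t)}{U_t'(r_t)}U_t''(r_t)<0$ (again Lemma~\ref{lem:jacob}), and then runs a short local-to-global argument patching neighborhoods of boundary points on which $F$ is a local diffeomorphism into a single injective chart. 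Without this step your $\widetilde N$ does not contain the points $(q,w,0)$ with $w\neq 0$, and the family fails to be a smooth family of candidate solutions in the required sense.
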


Let us consider
$$
F:\begin{matrix}
\R\times(-\pi,\pi)&\to &\R^2\\
(t,\rho)&\mapsto&(U_t(\rho),U_t'(\rho)).
\end{matrix}
$$
Let us also define $\Delta=\{(t,\rho)\in\R_+^*\times(-\pi,\pi) \, |\, \, -r_t<\rho<r_t\}$
and $\overline\Delta$ be the closure of $\Delta$ in $\R_+^*\times(\pi,\pi)$.

\begin{lemma}
The map $F$ is a diffeomorphism from $\Delta$ to $\R_+^*\times\R$.
\end{lemma}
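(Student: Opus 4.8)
The plan is to show $F$ is a bijection from $\Delta$ onto $\R_+^*\times\R$ and then invoke the inverse function theorem using the nonvanishing Jacobian from Lemma~\ref{lem:jacob}. First I would record that the Jacobian of $F$ is
\[
\det DF(t,\rho)=\det\begin{pmatrix}H_t(\rho)&U_t'(\rho)\\ H_t'(\rho)&U_t''(\rho)\end{pmatrix}=H_t(\rho)U_t''(\rho)-U_t'(\rho)H_t'(\rho),
\]
since $\partial_t U_t=H_t$ and $\partial_\rho U_t=U_t'$, and $\partial_t U_t'=H_t'$, $\partial_\rho U_t'=U_t''$. By Lemma~\ref{lem:jacob} this never vanishes on $\overline\Delta$, so $F$ is a local diffeomorphism on a neighborhood of $\Delta$; it remains to prove global injectivity and surjectivity onto $\R_+^*\times\R$.

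For injectivity I would argue as follows. Fix $(t,\rho)\in\Delta$ and set $a=U_t(\rho)>0$. Since $U_t$ is even, decreasing on $[0,r_t]$ (as $U_t'<0$ there) and $H_t>0$ on $(-r_t,r_t)$, the value $a$ together with the sign of $\rho$ is recovered once $t$ is known; so it suffices to show that $t\mapsto(U_t(\rho),U_t'(\rho))$ cannot coincide for two parameters. Suppose $F(t_1,\rho_1)=F(t_2,\rho_2)$ with, say, $t_1<t_2$ and both $\rho_i\ge 0$. The curves $\rho\mapsto(U_{t_i}(\rho),U_{t_i}'(\rho))$ are, by the nonvanishing Jacobian, regular curves in the plane that each start on the positive $x$-axis (at $\rho=0$, $U_{t_i}'(0)=0$, $U_{t_i}(0)=t_i$) and end on the negative $y$-axis (at $\rho=r_{t_i}$, $U_{t_i}(r_{t_i})=0$, $U_{t_i}'(r_{t_i})<0$); they are graphs over an interval of the angular variable because the polar angle $\beta$ of $(U_t,U_t')$ is strictly monotone (its derivative has the sign of $H_tU_t''-U_t'H_t'$ times a positive factor, arguing as in Example~1 with $U$ replaced by the pair $(U_t,U_t')$ and using $H_t$ as the $t$-Jacobi field). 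Then the two arcs for $t_1$ and $t_2$ are disjoint: one can see this by the same eigenvalue/comparison argument used to prove $U_t$ is increasing in $t$ — the function $H_t=\partial_tU_t$ being strictly positive on $(-r_t,r_t)$ forces the arcs to be nested without crossing, hence $F$ is injective on $\Delta$.

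For surjectivity I would combine openness (local diffeomorphism) with a properness/closedness argument. The image $F(\Delta)$ is open in $\R_+^*\times\R$. To see it is also closed in $\R_+^*\times\R$, take a sequence $(t_n,\rho_n)\in\Delta$ with $F(t_n,\rho_n)\to(a,b)\in\R_+^*\times\R$; the parameters $\rho_n$ lie in the compact-after-truncation range $[0,r_{t_n}]$ (using evenness), and $t_n$ stays bounded away from $0$ and from $+\infty$: away from $0$ because $a>0$ while $U_t(\rho)\le U_t(0)=t$ forces $t_n\ge a$, and away from $+\infty$ by Lemma~\ref{lem:proper}, which says $\|(U_{t_n}(\rho_n),U_{t_n}'(\rho_n))\|\to\infty$ if $t_n\to\infty$, contradicting convergence to $(a,b)$. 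So a subsequence of $(t_n,\rho_n)$ converges in $\overline\Delta$, and by continuity the limit $(t_\infty,\rho_\infty)$ satisfies $F(t_\infty,\rho_\infty)=(a,b)$ with $a>0$, which forces $\rho_\infty\in(-r_{t_\infty},r_{t_\infty})$, i.e. $(t_\infty,\rho_\infty)\in\Delta$. Hence $F(\Delta)$ is nonempty, open and closed in the connected set $\R_+^*\times\R$, so $F(\Delta)=\R_+^*\times\R$. Together with injectivity and the nonvanishing Jacobian, $F:\Delta\to\R_+^*\times\R$ is a diffeomorphism.

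The main obstacle I anticipate is the injectivity step: the local-diffeomorphism property is immediate from Lemma~\ref{lem:jacob}, and surjectivity is a soft topological argument once Lemma~\ref{lem:proper} is in hand, but ruling out global self-overlap of the family of profile curves genuinely uses the positivity of the Jacobi field $H_t$ (monotonicity of $U_t$ in $t$) together with the monotonicity of the polar angle of $(U_t,U_t')$, and these have to be assembled carefully — this is the analogue of checking that the curves $\rho\mapsto(U(\rho),U'(\rho))$ in Example~1 sweep out the sector $S$ injectively, now with the extra parameter $t$.
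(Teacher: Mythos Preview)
Your surjectivity argument is essentially the paper's properness argument, and it is correct: sequences in $\Delta$ whose $F$-images stay in a compact of $\R_+^*\times\R$ cannot escape $\Delta$, by $U_t\le t$, by Lemma~\ref{lem:proper}, and by $U_t(r_t)=0$. Combined with the local diffeomorphism coming from Lemma~\ref{lem:jacob}, this already gives everything: a proper local diffeomorphism onto a connected target is a covering map, and since $\R_+^*\times\R$ is simply connected, $F$ is a global diffeomorphism. This is exactly how the paper proceeds, and it makes a separate injectivity proof unnecessary.

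Your direct injectivity argument, however, has a genuine gap. You claim that for fixed $t$ the polar angle of the curve $\rho\mapsto(U_t(\rho),U_t'(\rho))$ is strictly monotone, with derivative of the sign of $H_tU_t''-U_t'H_t'$. That is a confusion: the angular velocity of a single profile curve is governed by $U_tU_t''-(U_t')^2$, \emph{not} by $H_tU_t''-H_t'U_t'$, which is the Jacobian of $F$ in the $(t,\rho)$ variables. Lemma~\ref{lem:jacob} says nothing about the sign of $U_tU_t''-(U_t')^2$ for general $f$ satisfying~\eqref{eq:hypo}, so the polar-graph picture from Example~1 does not carry over. Likewise, knowing $H_t>0$ (hence $U_t$ increasing in $t$ for fixed $\rho$) does not by itself prevent two curves $\rho\mapsto(U_{t_i}(\rho),U_{t_i}'(\rho))$ from meeting at different $\rho$-values, since the second coordinate $U_t'$ need not be monotone in $t$. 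So the ``nested curves'' picture is not justified as written. The fix is simply to drop this step and use the covering-space argument you have already set up via properness.
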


\begin{proof}
First we notice that $F(\Delta)\subset \R_+^*\times\R$. We have 
$$
\partial_tF(t,\rho)=(H_t(\rho),H_t'(\rho))
\quad\textrm{ and }\quad
\partial_tF(t,\rho)=(U_t'(\rho),U_t''(\rho)).
$$
So the differential $DF$ of $F$ has not rank $2$ if and only if
$(H_tU_t''-H_t'U_t')(\rho)=0$ for some $\rho\in(-r_t,r_t)$. By
Lemma~\ref{lem:jacob}, this never occurs so $F$ is a local diffeomorphism.

Let us prove that $F$ is proper. Let $(t_n,\rho_n)_n$ be a proper sequence in $\Delta$. If
$t_n\to +\infty$, Lemma~\ref{lem:proper} tells that $\|F(t_n,\rho_n)\|\to +\infty$. If
$t_n\to 0$, by monotonicity, $U_{t_n}(\rho_n)\le U_{t_n}(0)=t_n\to 0$ and $F(t_n,\rho_n)$
goes to $\partial (\R_+^*\times\R)$. If $t_n\to a>0$ then $\rho_n\to r_a$ and
$U_{t_n}(\rho_n)\to U_a(r_a)=0$. Therefore, in any case, $F(t_n,\rho_n)$ goes to $\partial
(\R_+^*\times\R)$. Thus $F$ is proper which implies that $F:\Delta\to \R_+^*\times\R$ is a
covering map. Since $\R_+^*\times\R$ is simply connected $F$ is a diffeomorphism.
\end{proof}

We have $F(t,r_t)=(0,U_t'(r_t))$ and $\partial_t U_t'(r_t)= H_t'(r_t)+
U_t''(r_t)\partial_t r_t= H_t'(r_t)- \frac{H_t(r_t)}{U_t'(r_t)}
U_t''(r_t) <0$ by Lemma~\ref{lem:jacob}. This implies that $F$ extend to a bijective map
on $\overline\Delta$, notice that $F(t,-r_t)=(0,-U_t'(r_t))$. Actually we have much more.

\begin{lemma}
There is an open neighborhood $\widetilde\Delta$ of $\overline\Delta$ in
$\R_+^*\times(-\pi,\pi)$ such that $F$ is a diffeomorphism from $\widetilde\Delta$ onto
its image.
\end{lemma}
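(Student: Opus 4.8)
The plan is to extend the diffeomorphism $F:\Delta\to\R_+^*\times\R$ constructed in the previous lemma past the boundary $\{\rho=\pm r_t\}$. The key point is that $F$ is already a diffeomorphism on the open set $\Delta$ and, by Lemma~\ref{lem:jacob}, its differential $DF$ has rank $2$ on the larger closed set $\overline\Delta$ as well, since $H_tU_t''-H_t'U_t'$ does not vanish on $[-r_t,r_t]$. So I would first invoke the inverse function theorem at each boundary point $(t,r_t)\in\overline\Delta$ (and similarly $(t,-r_t)$): there $DF$ is invertible, so $F$ restricts to a diffeomorphism from some open neighborhood $W_{(t,r_t)}$ in $\R_+^*\times(-\pi,\pi)$ onto an open subset of $\R^2$. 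Taking the union of $\Delta$ with all these neighborhoods gives an open set $\widetilde\Delta_0\supset\overline\Delta$ on which $F$ is a local diffeomorphism.

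The remaining issue is injectivity: a local diffeomorphism on an open set need not be globally injective, so I must shrink $\widetilde\Delta_0$. Here I would use the fact, noted just before the statement, that $F$ extends to a \emph{bijective} map on $\overline\Delta$ itself — this follows because $F(t,\pm r_t)=(0,\mp U_t'(r_t))$ and $\partial_t U_t'(r_t)=H_t'(r_t)-\frac{H_t(r_t)}{U_t'(r_t)}U_t''(r_t)<0$ by Lemma~\ref{lem:jacob}, so the boundary arcs are mapped injectively onto the two rays $\{0\}\times\R_-$ and $\{0\}\times\R_+$, which are disjoint from $F(\Delta)=\R_+^*\times\R$ and from each other. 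Thus $F$ is injective on $\overline\Delta$. The standard argument now is: if $F$ were not injective on any neighborhood of $\overline\Delta$, there would be sequences $x_n\neq y_n$ in $\R_+^*\times(-\pi,\pi)$ with $F(x_n)=F(y_n)$ and $\mathrm{dist}(x_n,\overline\Delta),\mathrm{dist}(y_n,\overline\Delta)\to 0$. Since $\overline\Delta$ is compact (it is a closed bounded subset of $\R_+^*\times(-\pi,\pi)$; one checks it does not touch $\{t=0\}$ nor $\{\rho=\pm\pi\}$), after passing to subsequences $x_n\to x_\infty$, $y_n\to y_\infty$ in $\overline\Delta$ with $F(x_\infty)=F(y_\infty)$, hence $x_\infty=y_\infty$ by injectivity on $\overline\Delta$; but then $x_n,y_n$ eventually lie in a neighborhood of $x_\infty$ on which $F$ is a diffeomorphism by the inverse function theorem, contradicting $x_n\neq y_n$, $F(x_n)=F(y_n)$.

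Therefore there is $\varepsilon>0$ such that $F$ is injective on the $\varepsilon$-neighborhood of $\overline\Delta$; intersecting this with $\widetilde\Delta_0$ and with $\R_+^*\times(-\pi,\pi)$ yields the desired open set $\widetilde\Delta$, on which $F$ is an injective local diffeomorphism, hence a diffeomorphism onto its (open) image. I expect the main obstacle to be the bookkeeping needed to confirm compactness of $\overline\Delta$ — in particular verifying that $\inf_t$ and $\sup_t$ stay in $\R_+^*$ and that $r_t$ stays bounded away from $\pi$ on the relevant range — together with making the covering/injectivity argument clean; the differential computation itself is already handed to us by Lemma~\ref{lem:jacob}.
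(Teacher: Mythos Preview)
Your approach has a genuine gap: the set $\overline\Delta$ is \emph{not} compact. Recall that $\Delta=\{(t,\rho)\in\R_+^*\times(-\pi,\pi)\,|\,-r_t<\rho<r_t\}$, so $t$ ranges over all of $\R_+^*$; the closure $\overline\Delta$ in $\R_+^*\times(-\pi,\pi)$ is therefore unbounded in the $t$-direction. This is not a matter of bookkeeping --- there is no ``relevant range'' of $t$ to restrict to, since the family of candidate solutions must be parametrized by all of $N$ and hence requires $F^{-1}$ on a full neighborhood of $\overline{\R_+\times\R}\setminus\{0\}$. Consequently your sequential argument breaks down: from $x_n\neq y_n$ with $F(x_n)=F(y_n)$ and $\mathrm{dist}(x_n,\overline\Delta)\to 0$ you cannot extract convergent subsequences, and there is no reason a single $\varepsilon>0$ should work uniformly as $t\to 0$ or $t\to\infty$.

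The paper circumvents compactness entirely with a path-lifting argument. For each $p\in\partial\Delta$ it chooses a small neighborhood $\mathcal O_p$ on which $F$ is a diffeomorphism, arranged so that $F(\mathcal O_p\setminus\overline\Delta)\subset\R_-^*\times\R$ and so that for every $q\in\mathcal O_p$ the segment $[F(q),P(F(q))]$ (where $P$ is projection onto $\{0\}\times\R$) lies in $F(\mathcal O_p)$. Then $\widetilde\Delta=\Delta\cup\bigcup_p\mathcal O_p$. If $F(q_1)=F(q_2)$ with $q_i\in\mathcal O_{p_i}$, lift the common segment $[F(q_i),P(F(q_i))]$ in each $\mathcal O_{p_i}$; the endpoints on $\partial\Delta$ must agree because $F$ is already injective there, and then local injectivity forces the two lifts to coincide, giving $q_1=q_2$. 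This argument is pointwise and needs no global compactness.
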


\begin{proof}
Let $P$ be the projection map 
$$
P:\begin{matrix}
\R^2&\to &\{0\}\times\R\\
(x,y)&\mapsto&(0,y).
\end{matrix}
$$

Let $p\in \partial \Delta$, since $DF(p)$ has rank $2$ there is a neighborhood $\mathcal
O_p$ of $p$ such that $F$ is a diffeomorphism from $\mathcal O_p$ onto its image. Moreover
by reducing $\mathcal O_p$ if necessary, we can assume $F(\mathcal O_p\setminus
\overline\Delta)\subset \R_-^*\times\R$. 

Actually, by reducing $\mathcal O_p$ once more, we can assume that for any $q\in \mathcal
O_p$, the segment $[F(q),P(F(q))]$ is contained in $F(\mathcal O_p)$. We claim that
$\widetilde \Delta=\Delta\cup(\bigcup_{p\in\partial \Delta} \mathcal O_p)$ satisfies the
expected property.

Since $F$ is a local diffeomorphism on $\widetilde \Delta$, we only have to check the
injectivity. Since $F(\mathcal O_p\setminus \overline\Delta)\subset \R_-^*\times\R$ and
$F$ is injective in $\overline \Delta$, if $F$ were not injective, there is $q_i\in
\mathcal O_{p_i}$ ($i=1,2$) such that $F(q_1)=F(q_2)$. Let $\gamma : [0,1] \to \R^2$ be
the segment $[F(q_i),P(F(q_i))]$, with $\gamma (0) = F(q_i)$ and $\gamma (1) = P(F(q_i))$,
and $\gamma_i=F^{-1}(\gamma)$ in $\mathcal O_{p_i}$. We have $F(\gamma_i(1))=P(F(q_i))$.
So since $F$ is injective on $\partial\Delta$, $\gamma_1(1)=\gamma_2(1)$. Since
$F(\gamma_1(s))=F(\gamma_2(s))$ and $F$ is a local diffeomorphism, $\gamma_1=\gamma_2$ and
$q_1=\gamma_1(0)=\gamma_2(0)=q_2$: $F$ is injective. 
\end{proof}

Now the end of the proof of Proposition~\ref{prop:candfam} is similar to the one of the
\eqref{eq:oep3} case. More precisely, let us define $S=F(\widetilde \Delta)$, $\widetilde
N=\{((q,w),a)\in T\s^2\times\R \, | \,\, (a,|w|)\in S\}$ and 
$$F^{-1}(x,y)=(T(x,y),R(x,y)) \text{ for } (x,y)\in S .$$ 
Then the computations are the same except that the estimates of $R$ become
$$
R(x,y)=-\frac2{f(x)}y+o(y)\quad\textrm{ and
}\quad\frac{\partial R}{\partial y}(x,y)=-\frac2{f(x)}+o(1).
$$


\section{Main Theorem and applications}\label{sec:mainth}

Before stating our main theorem, let us give a last definition. Let $(M,g)$ be a
Riemannian surface and $Q$ be a quadratic form on $M$. A curve
$\gamma$ on $M$ is a \textit{line of curvature} of $Q$ if $\gamma'(t)$ is an eigenvector
of $Q_{\gamma(t)}$ for any $t$.

Now, with the definition of a family of candidate solutions associated to an OEP, we can announce:

\begin{theorema}
Let $(M,g)$ be a complete connected Riemannian surface and $f:\R \to \R $ be a Lipschitz
function. Let $\Omega \subset M$ be a bounded connected domain with a $C^2$
boundary $\partial \Omega$ and $u \in C^3 (\overline\Omega)$ be a solution to
the OEP 
$$
\begin{cases}
\Delta{u}+f(u)=0  &\text{ in }   \Omega,\\
u>0  				 &\text{ in }   \Omega,\\
u=0                     &\text{ on }\partial\Omega,\\
\lan\nabla{u},\vec{\eta}\ran=\alpha &\text{ on } \partial\Omega .
\end{cases}
$$

Assume that the above OEP admits a family of candidate solutions $\mathcal C$. Then, there
exists a $C^1$ traceless symmetric bilinear form $Q$ on $\overline{\Omega}$ such that 
\begin{enumerate}
\item $Q$ vanishes at some $x \in \overline\Omega$ if, and only if, $u$ has a contact of
order $k \geq 2$ at $x$ with some $\bar v \in \mathcal C $.
\item $Q$ vanishes identically on $\overline\Omega$ if, and only if, $u \in \boC _\alpha$.
\item If $Q$ does not vanish identically on $\overline\Omega$, then $Q $ has only isolated
(interior and boundary) zeroes. Moreover, the null directions of $Q$ determine on
$\overline\Omega$ two $C^1$ line fields with isolated singularities of negative index.
\item The boundary $\partial \Omega$ is a line of curvature of $Q$.
\end{enumerate}
\end{theorema}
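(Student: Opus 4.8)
The plan is to construct $Q$ directly from the candidate family $\mathcal C$ using the contact-order idea of Gálvez–Mira. First I would define, at each point $x\in\overline\Omega$, the candidate solution $\bar v_x\in\mathcal C$ that agrees with $u$ to first order at $x$: since $u(x)\ge 0$, $\nabla u(x)$, and $u(x)$ are data of the type parametrizing $N=(TM\times\R_+)\setminus\{(q,0,0)\}$, set $\bar v_x := v_{x,\nabla u(x),u(x)}$, which is well-defined because $(x,\nabla u(x),u(x))\in N$ (note $u>0$ in $\Omega$ and $\lan\nabla u,\vec\eta\ran=\alpha<0$ on $\partial\Omega$ means the pair is never $(x,0,0)$). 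By Definition~\ref{Def:Candidate}(b)–(c), $\bar v_x$ and $u$ have contact of order $\ge 1$ at $x$. Then I would define $Q_x$ to be the traceless part of the symmetric bilinear form $\nabla^2 u(x)-\nabla^2\bar v_x(x)$. Because both $u$ and $\bar v_x$ solve $\Delta\cdot+f(\cdot)=0$ and agree to first order at $x$, we have $\Delta(u-\bar v_x)(x)=-f(u(x))+f(\bar v_x(x))=0$, so $\nabla^2 u(x)-\nabla^2\bar v_x(x)$ is already traceless; hence $Q_x=\nabla^2 u(x)-\nabla^2\bar v_x(x)$ with no correction needed. The $C^1$ regularity of $Q$ follows from the smoothness of the family (the map $\Phi$ in the definition of a smooth family of $C^3$ functions is $C^1$ on $A$) composed with the $C^2$ map $x\mapsto(x,\nabla u(x),u(x))$, using $u\in C^3(\overline\Omega)$; item~(d) of Definition~\ref{Def:Candidate} guarantees this assignment is consistent, i.e. $\bar v_x$ is genuinely a function of $x$ and the reparametrization is internally coherent.

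With $Q$ in hand, item~1 is essentially tautological after an elementary lemma: two solutions of the same semilinear equation that agree to first order at $x$ have contact of order $\ge 2$ there if and only if their Hessians agree at $x$, i.e. iff $Q_x=0$; contact of order $\ge 2$ with \emph{some} $\bar v\in\mathcal C$ forces $\bar v=\bar v_x$ by the uniqueness built into the family (two candidate solutions sharing $0$th- and $1$st-order data at a point are the same by the ODE-uniqueness / maximum-principle arguments underlying the constructions, or more directly one takes $\bar v$ itself as the candidate at $x$). For item~2, if $Q\equiv 0$ then $u$ has contact of order $\ge 2$ with $\bar v_x$ at every $x$; a connectedness/continuation argument (the set where $u$ and a fixed candidate agree to order $\ge 2$ is open and closed) shows $u$ coincides with a single $\bar v\in\mathcal C$ on all of $\overline\Omega$, and since $\lan\nabla u,\vec\eta\ran=\alpha$ on $\partial\Omega$, that candidate lies in $\mathcal C_\alpha$; conversely if $u\in\mathcal C_\alpha$ then $\bar v_x=u$ for all $x$ and $Q\equiv 0$.

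Item~3 is the heart of the matter, and where I expect the main obstacle. Away from its zeroes, the traceless symmetric $2$-tensor $Q$ defines a Lorentzian conformal class, hence two transverse null line fields, which are $C^1$ since $Q$ is. The index computation is the key point: I would show that near an isolated zero $x_0$ the form $Q$ behaves, to leading order, like the real part of a holomorphic (or, because $f$ is only Lipschitz, a quasi-holomorphic / Bers–Vekua pseudo-analytic) function of a local conformal coordinate, so that each null line field has a singularity of index $\le -1/2$; this is exactly the Gálvez–Mira mechanism adapted to our $Q$, and the technical work is to verify that the difference $u-\bar v_x$ satisfies a suitable linear elliptic equation with the right structure (the linearization involves $f'$, present only almost everywhere, so one invokes the similarity principle for generalized analytic functions rather than honest holomorphicity). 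Isolatedness of the zeroes of $Q$ follows from the same local normal form. Finally, for item~4, along $\partial\Omega$ both $u$ and $\bar v_x$ vanish and have parallel gradients (both normal, with $\lan\nabla u,\vec\eta\ran=\alpha<0$ and $\lan\nabla\bar v_x,\vec\eta\ran<0$), and I would differentiate the relations $u=0=\bar v_x$ and $\Delta(u-\bar v_x)=0$ tangentially along $\partial\Omega$ to show that $Q(\gamma',\cdot)$ is proportional to the $\gamma'$-covector, i.e. $\gamma'$ is an eigendirection of $Q$ at each boundary point, so $\partial\Omega$ is a line of curvature of $Q$; the computation reduces, in Fermi coordinates along $\partial\Omega$, to checking that the off-diagonal (tangent–normal) entry of $\nabla^2 u-\nabla^2\bar v_x$ vanishes on the boundary, which follows from $u\equiv\bar v_x\equiv 0$ on $\partial\Omega$ and the equality of the normal derivatives being \emph{constant} (hence with zero tangential derivative) along $\partial\Omega$ — note this last point uses that $\alpha$ is a constant and that candidates also have constant Neumann data.
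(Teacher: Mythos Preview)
Your construction of $Q$ and the arguments for items~1 and~4 match the paper's approach closely and are correct. The real content is item~3, and here your sketch has a genuine gap.

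You propose to show that ``the difference $u-\bar v_x$ satisfies a suitable linear elliptic equation'' and then invoke the similarity principle. But $\bar v_x$ varies with $x$, so $u-\bar v_x$ is not a single function to which one can apply Bers--Vekua theory; and even if you freeze $\bar v_{x_0}$ at a zero $x_0$, the resulting linear equation for $u-\bar v_{x_0}$ controls that difference, not $Q$. What the paper actually does is work with the $(2,0)$-part $\boP=P(z)\,dz^2$ of $Q$ in a conformal coordinate and prove a Carleman-type identity
\[
P_{\bar z}(z)=\beta(z)\,\overline{P}(z)
\]
for a continuous $\beta$. This is the step you are missing, and its derivation is where property~(d) of the candidate family is used in an essential way: when you differentiate $P(z)=\nabla^2 u(\partial_z,\partial_z)-\nabla^2 v_{z,\nabla u(z),u(z)}(\partial_z,\partial_z)$ with respect to $\bar z$, you pick up derivatives of $v$ in all three slots $q,w,a$. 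The trick is to perform the \emph{same} computation with $u$ replaced by the fixed candidate $\bar v=v_{z_0,\nabla u(z_0),u(z_0)}$; since $\bar v=v_{x,\nabla\bar v(x),\bar v(x)}$ for every $x$ (this is exactly property~(d)), the corresponding $P^{\bar v}$ vanishes identically, and subtracting the two computations kills the $D_q$ and $D_a$ terms. Only the $D_w$ term survives, and it is fed by $\nabla_{\partial_{\bar z}}\nabla u-\nabla_{\partial_{\bar z}}\nabla\bar v$, which one checks equals $\tfrac{2}{\lambda}\overline{P}\,\partial_z$. Once $|P_{\bar z}|\le h|P|$ is established, the local factorization $P(z)=(z-z_0)^k\tilde f(z)$ with $\tilde f(z_0)\neq 0$ (hence isolated zeroes and negative index for the null line fields) follows from the standard lemma in Jost; for boundary zeroes you need the half-disk version of that lemma, obtained by reflecting $P$ across $\partial\Omega$ using that $P$ is real on the boundary (a consequence of item~4).

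A smaller point: your open-and-closed argument for item~2 is incomplete as stated. Knowing $Q\equiv 0$ tells you $u$ and $\bar v_{x_0}$ agree to second order at each $x_0$, but you need to show $u=\bar v_{x_0}$ \emph{near} $x_0$ to get openness. The paper does this via Bers' theorem applied to $u-\bar v_{x_0}$ (now a fixed function): either it vanishes near $x_0$ or it has a leading harmonic polynomial $q$ of degree $k\ge 2$, and in the latter case one computes $Q_x=Hq(X)+o(\|X\|^{k-2})$, contradicting $Q\equiv 0$.
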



\subsection{Applications}

As a consequence of Theorem A, we can prove some uniqueness property for solutions of some OEP.

\begin{theorem}\label{th:simplyconnected}
Let $f$ be a Lipschitz function and $\Omega \subset M$ be a compact simply-connected domain with a
$C^2$ boundary $\partial \Omega$ where the OEP \eqref{eq:oep} admits a solution
$u \in C^3 (\Omega)$. If the OEP admits a family of candidate solutions $\mathcal C$, then
$u \in \mathcal C _\alpha$.
\end{theorem}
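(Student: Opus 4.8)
The plan is to deduce Theorem~\ref{th:simplyconnected} directly from Theorem~A by a Poincar\'e--Hopf counting argument. Let $Q$ be the $C^1$ traceless symmetric bilinear form on $\overline\Omega$ produced by Theorem~A. We argue by contradiction: suppose $u\notin\mathcal C_\alpha$. Then, by item~(2) of Theorem~A, $Q$ does not vanish identically on $\overline\Omega$, so item~(3) applies and $Q$ has only isolated interior and boundary zeroes, and its two null directions define $C^1$ line fields on $\overline\Omega$ with isolated singularities, each of negative index. The goal is to show this is incompatible with $\Omega$ being a compact simply-connected domain whose boundary $\partial\Omega$ is, by item~(4), a line of curvature of $Q$.

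The key step is the index computation. Since $\partial\Omega$ is a line of curvature of $Q$, the boundary curve $\partial\Omega$ is everywhere tangent to one of the two null line fields; hence along $\partial\Omega$ neither line field is singular (the tangent direction is a well-defined eigendirection there) unless it meets an isolated boundary zero of $Q$. Away from those finitely many boundary zeroes, the line field tangent to $\partial\Omega$ has no singularity on the boundary, and a standard doubling argument — reflecting $\Omega$ across $\partial\Omega$ to form a topological sphere $\widehat\Omega=\Omega\cup_{\partial\Omega}\Omega$, exactly as in the Hopf--Nitsche scheme recalled in the introduction — extends the line field continuously across $\partial\Omega$ (the line-of-curvature condition is precisely what makes the extension continuous, possibly after also tracking how boundary zeroes of negative index double up). One then has a line field on the topological sphere $\widehat\Omega$ with finitely many singularities, each of negative index (interior zeroes contribute their own negative index; doubled boundary zeroes contribute a negative amount as well). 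The Poincar\'e--Hopf theorem for line fields on $\s^2$ forces the sum of indices to equal $\chi(\s^2)=2>0$, contradicting that every singularity has negative index. Hence $Q$ must vanish identically, and by item~(2) we conclude $u\in\mathcal C_\alpha$.

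I expect the main obstacle to be the careful bookkeeping at the boundary: one must verify that the line-of-curvature condition (item~4) genuinely guarantees a \emph{continuous} extension of the null line fields over the doubled surface near $\partial\Omega$, and that the isolated boundary zeroes of $Q$, after doubling, still contribute a nonpositive (in fact negative) total index, so that the global index count remains strictly negative while Poincar\'e--Hopf demands it be $+2$. This is where the precise local normal form of a traceless symmetric bilinear form near a zero, together with the reflection symmetry, has to be invoked; the interior part of the argument is the standard fact that zeroes of such a form have negative index (already furnished by item~(3)). Once the doubling is justified, the contradiction is immediate and the proof is complete.
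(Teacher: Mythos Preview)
Your overall strategy---double $\overline\Omega$ across $\partial\Omega$ to a topological sphere and apply Poincar\'e--Hopf to the null line fields of $Q$---is exactly the paper's. There is, however, a slip in the boundary analysis that would make your version of the doubling fail as written.

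A line of curvature of $Q$ is tangent to an \emph{eigendirection} of $Q$, not to a null direction: for a traceless symmetric form with eigenvalues $\pm\lambda$ in an orthonormal eigenframe $(\vec\tau,\vec\eta)$, the null directions are $\vec\tau\pm\vec\eta$, i.e.\ at $45^\circ$ to $\partial\Omega$. Consequently neither null line field is tangent to $\partial\Omega$, and if you reflect a single null line field across $\partial\Omega$ you land on the \emph{other} one; a chosen null line field therefore does \emph{not} extend continuously across the seam. What the line-of-curvature condition really buys you is that $Q$ is diagonal in the frame $(\vec\tau,\vec\eta)$ along $\partial\Omega$, hence invariant under $\vec\eta\mapsto-\vec\eta$. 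The paper exploits this by pulling back the \emph{form} $Q$ itself: take a diffeomorphism $\Phi_+:\s_+\to\overline\Omega$, set $\Phi_-=\Phi_+\circ S$ on $\s_-$ with $S$ the equatorial reflection, and define $\tilde Q=\Phi_\pm^*Q$ on each hemisphere. The diagonal form of $Q$ along $\partial\Omega$ makes the two pullbacks agree on the equator, so $\tilde Q$ is a continuous traceless symmetric form on $\s^2$; its null line fields, read off from $\tilde Q$ globally, then automatically have only isolated singularities of negative index, and Poincar\'e--Hopf gives the contradiction. This resolves all of your boundary bookkeeping in one stroke.
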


Combining the above theorem with the candidate families constructed in Section \ref{Examples}, we obtain:

\begin{corollary}[BCN conjecture in $\s^2$]\label{cor:bcn}
Let us assume that $f$ satisfies \eqref{eq:hypo}. Let $u$ be a solution of \eqref{eq:oep}
in some topological disk $\Ome \subset \s^2$ with $C^2$ boundary. Then, $\Ome$ is a geodesic disk
(centered at some point $p\in \s^2$) and $u$ is rotationally symmetric (with respect to
the center $p$).
\end{corollary}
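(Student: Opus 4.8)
The plan is to combine the two main ingredients that the paper has already set up: Proposition~\ref{prop:candfam}, which provides a family of candidate solutions $\mathcal C$ for the OEP~\eqref{eq:oep2} whenever $f$ satisfies \eqref{eq:hypo}, and Theorem~\ref{th:simplyconnected}, which asserts that any solution $u$ on a compact simply-connected domain $\Omega$ with $C^2$ boundary must lie in $\mathcal C_\alpha$. First I would observe that a topological disk $\Omega\subset\s^2$ with $C^2$ boundary is exactly a compact simply-connected domain with $C^2$ boundary, so the hypotheses of Theorem~\ref{th:simplyconnected} are met once we know $u\in C^3(\overline\Omega)$; the regularity $u\in C^3$ follows from the fact that $f$ is $C^1$ (hence the $C^3$-regularity remark after Lemma~\ref{lem:solode} and standard elliptic regularity apply, and in any case the solutions in $\mathcal C$ were built to be $C^3$). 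Applying Theorem~\ref{th:simplyconnected} then gives $u\in\mathcal C_\alpha$.

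The second step is to identify which elements of $\mathcal C$ these are. By construction in Section~\ref{Examples}, the family $\mathcal C$ of candidate solutions for~\eqref{eq:oep2} under~\eqref{eq:hypo} consists precisely of the rotationally symmetric solutions $u_{t,p}$ on geodesic disks $D_{t,p}$ of center $p\in\s^2$ and radius $r_t$, reparametrized via the diffeomorphism $F$. Thus membership $u\in\mathcal C_\alpha$ means that, up to the reparametrization, $u=u_{t,p}$ for some $t>0$ and some $p\in\s^2$, with $\Omega=D_{t,p}$ a geodesic disk and $u$ rotationally symmetric about $p$. This is exactly the conclusion of the corollary. I would also note that the Neumann constant $\alpha$ selects the radius $r_t$: since $t\mapsto U_t'(r_t)$ is strictly monotone (shown just after the covering-map lemma, using Lemma~\ref{lem:jacob}), the value $\alpha=U_t'(r_t)$ determines $t$ uniquely, hence the radius is pinned down, though uniqueness of $t$ is not actually needed for the statement as written.

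The main (really, the only) obstacle is bookkeeping rather than mathematics: one must make sure that the abstract hypotheses of Theorem~\ref{th:simplyconnected}—compactness, simple connectivity, $C^2$ boundary, $C^3$ solution, and existence of a candidate family—are all literally satisfied in the $\s^2$ setting, and that the identification of $\mathcal C_\alpha$ with the rotationally symmetric model solutions is the one delivered by Proposition~\ref{prop:candfam}. Once these matches are made explicit, the corollary is immediate. I expect the write-up to be short: invoke Proposition~\ref{prop:candfam} to get $\mathcal C$, invoke Theorem~\ref{th:simplyconnected} to get $u\in\mathcal C_\alpha$, and then unwind the definition of $\mathcal C$ from Section~\ref{Examples} to read off that $\Omega$ is a geodesic disk and $u$ is rotationally symmetric.
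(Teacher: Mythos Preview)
Your proposal is correct and matches the paper's own (implicit) argument: the corollary is stated immediately after Theorem~\ref{th:simplyconnected} as a direct combination of that theorem with the candidate family built in Proposition~\ref{prop:candfam}, and your write-up simply makes this combination explicit. The extra remarks you add about $C^3$ regularity and the monotonicity of $t\mapsto U_t'(r_t)$ are accurate side observations but not needed for the bare statement.
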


\begin{proof}[Proof of Theorem~\ref{th:simplyconnected}]
Let $Q$ be the traceless symmetric bilinear form given by Theorem~A. If $Q$ vanishes
identically then $u \in \mathcal C _\alpha$ and we are done. So, we assume that $Q$ does
not vanish identically and we will get a contradiction. We follow ideas already appearing
in the work of Choe~\cite{Choe}.

Let $\Phi_+$ be a diffeomorphism from the north hemisphere $\s_+=\{(x,y,z)\in\s^2 \, |\, \, z\ge 0\}$ to $\overline\Ome$. Let us define $\Phi_-=\Phi_+\circ S$ on the south hemisphere $\s_-$ where $S$ is the symmetry with respect to the equator $E$. We define $\tilde Q=\Phi_+^*Q$ on $\s_+$ and $\tilde Q=\Phi_-^*Q$ on $\s_-$. Since $\partial\Ome$ is a line of curvature of $Q$ these two definitions coincide on $E$. By Theorem~A, $\tilde Q$ is a symmetric bilinear form on $\s^2$ which has isolated
zeroes and is a Lorentzian metric outside its zeroes. Moreover the null directions of
$\tilde Q$ determine on $\s^2$ two line fields with isolated singularities of negative
index. This gives a contradiction by the Poincar\'{e}-Hopf Index Theorem.
\end{proof}

Another consequence is the classification of simply-connected harmonics domains in $\s^2$. A domain $\Omega \subset \s ^2$ with regular boundary is harmonic if the mean value of any harmonic function on $\Omega$ equals its mean value on $\partial \Omega$ and they are characterized by supporting a solution to the {\it Serrin Problem}
\begin{equation*}
\begin{cases}
\Delta{u}+1=0  &\text{ in }   \Omega,\\
u=0                     &\text{ on }\partial\Omega,\\
\langle\nabla{u},\vec{\eta}\rangle_{g}= \alpha &\text{ on } \partial\Omega .
\end{cases}
\end{equation*}

Note that in the above case, $u$ must be positive on the interior of $\Omega$ by the maximum principle. Thus, Theorem \ref{th:simplyconnected} and the candidate families constructed in Section \ref{Examples} give

\begin{corollary}[Serrin Problem in $\s^2$]\label{cor:Serrin}
Any simply connected harmonic domain in $\s ^2$ is a geodesic ball. 
\end{corollary}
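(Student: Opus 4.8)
The plan is to identify Corollary~\ref{cor:Serrin} as the special case $f\equiv 1$ of Corollary~\ref{cor:bcn}, so that the only genuine points to verify are that a simply connected harmonic domain in $\s^2$ really fits the framework of \eqref{eq:oep} and that the constant function $1$ lies in the class \eqref{eq:hypo}.

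First I would show that a simply connected harmonic domain $\Ome\subset\s^2$ with $C^2$ boundary (a proper simply connected subdomain of $\s^2$ with $C^2$ boundary, hence a topological disk) supports a bona fide solution of \eqref{eq:oep} with $f\equiv 1$. By the characterization recalled above, $\Ome$ carries a function $u$, real-analytic and in particular $C^3$ in $\Ome$ by interior elliptic regularity, which solves $\Delta u+1=0$ in $\Ome$, $u=0$ on $\partial\Ome$, and $\langle\nabla u,\vec\eta\rangle=\alpha$ on $\partial\Ome$. Since $\Delta u=-1<0$, $u$ is superharmonic and non-constant, so by the minimum principle its minimum over $\overline\Ome$ is attained only on $\partial\Ome$; thus $u>0$ in $\Ome$ (this is the remark preceding the statement). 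Hopf's boundary point lemma, applied to the superharmonic function $u$ which is positive inside and vanishes on $\partial\Ome$, then gives $\langle\nabla u,\vec\eta\rangle<0$ on $\partial\Ome$, i.e.\ $\alpha<0$. Hence no hypothesis of \eqref{eq:oep} has been tacitly discarded: $u$ is an honest solution of the OEP with $f\equiv 1$ on the topological disk $\Ome$.

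Next I would check that $f\equiv 1$ satisfies \eqref{eq:hypo}: it is $C^1$, and $f(x)=1>0$, $f(x)=1\ge 0=xf'(x)$ for every $x\in\R_+^*$. Therefore Proposition~\ref{prop:candfam} produces a family of candidate solutions for this OEP --- concretely the rescalings of the radial profiles solving $U''+(\cot\rho)U'+1=0$ on geodesic disks, reparametrized as in Section~\ref{Examples} --- so all hypotheses of Corollary~\ref{cor:bcn} (equivalently of Theorem~\ref{th:simplyconnected} together with Theorem~A) hold. Applying that corollary yields immediately that $\Ome$ is a geodesic disk and $u$ is rotationally symmetric, which is the statement of Corollary~\ref{cor:Serrin}. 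There is essentially no obstacle intrinsic to this deduction: the substance is entirely in Theorem~A, Theorem~\ref{th:simplyconnected}, and the candidate family of Section~\ref{Examples}; the present argument only needs the (immediate) membership of $1$ in \eqref{eq:hypo} and the (elementary, via the minimum principle and Hopf's lemma) observation that the Serrin boundary data force $u>0$ and $\alpha<0$.
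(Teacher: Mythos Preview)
Your proposal is correct and follows essentially the same route as the paper: the paper simply remarks that $u>0$ by the maximum principle and then invokes Theorem~\ref{th:simplyconnected} together with the candidate families of Section~\ref{Examples}, which is exactly your reduction to Corollary~\ref{cor:bcn} with $f\equiv 1$. One small terminological slip: for $f\equiv 1$ the candidate family of Proposition~\ref{prop:candfam} is not obtained by \emph{rescaling} a fixed radial profile (the equation is inhomogeneous), but by varying the initial value $U_t(0)=t$; this does not affect your argument since you correctly cite the proposition rather than relying on that description.
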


The Schiffer conjecture D can be stated as (cf. \cite{Sch,Shk}): Given a bounded connected open domain $\Omega \subset \s ^2$ with a regular boundary and such that the complement of its closure is connected, the existence of a solution to 
\begin{equation*}
\begin{cases}
\Delta{u}+\lambda u=0  &\text{ in }   \Omega,\\
u=0                     &\text{ on }\partial\Omega,\\
\langle\nabla{u},\vec{\eta}\rangle_{g}= \alpha &\text{ on } \partial\Omega ,
\end{cases}
\end{equation*}implies that $\Omega$ is a geodesic ball. When $\lambda$ is the first eigenvalue of the Laplacian then $u >0$ on $\Omega$. Thus, the Schiffer conjecture D for the first eigenvalue can be stated as:

\begin{quote}
{\bf First Schiffer conjecture D in $\s^2$:} {\it If there exits a solution to
\begin{equation*}
\begin{cases}
\Delta{u}+\lambda u=0  &\text{ in }   \Omega,\\
u>0  &\text{ in }   \Omega,\\
u=0                     &\text{ on }\partial\Omega,\\
\langle\nabla{u},\vec{\eta}\rangle_{g}= \alpha &\text{ on } \partial\Omega ,
\end{cases}
\end{equation*}then $\Omega$ is a geodesic ball.}
\end{quote}

Therefore, the previous considerations lead us to
\begin{corollary}[First Schiffer Conjecture D in $\s^2$]\label{cor:Schiffer}
The first Schiffer conjecture is true in $\s ^2$.
\end{corollary}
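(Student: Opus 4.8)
The plan is to derive the First Schiffer Conjecture D in $\s^2$ directly from Corollary~\ref{cor:bcn} by checking that the linear function $f(x)=\lambda x$ satisfies the hypothesis~\eqref{eq:hypo}, so that Corollary~\ref{cor:bcn} applies verbatim. First I would observe that the statement of the First Schiffer Conjecture D is precisely the OEP~\eqref{eq:oep} with $f(u)=\lambda u$: the system contains $\Delta u+\lambda u=0$ in $\Ome$, $u>0$ in $\Ome$, $u=0$ on $\partial\Ome$, and $\langle\nabla u,\vec\eta\rangle=\alpha$ on $\partial\Ome$, with $\Ome\subset\s^2$ a topological disk with regular (hence $C^2$) boundary. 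The only thing to verify is that $f$ meets the requirements of \eqref{eq:hypo}.

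The verification is immediate. Since the existence of a positive solution to $\Delta u+\lambda u=0$ with $u=0$ on $\partial\Ome$ forces $\lambda$ to be an eigenvalue of $-\Delta$ on $\Ome$, we have $\lambda>0$; in particular $f(x)=\lambda x>0$ for every $x\in\R_+^*$. Moreover $f'(x)=\lambda$, so $xf'(x)=\lambda x=f(x)$, hence the inequality $f(x)\ge xf'(x)$ holds (with equality) for all $x\in\R_+^*$. Finally $f$ is manifestly $C^1$ (indeed $C^\infty$). Therefore $f(x)=\lambda x$ satisfies \eqref{eq:hypo}.

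With \eqref{eq:hypo} established, Corollary~\ref{cor:bcn} applies to the solution $u$, yielding that $\Ome$ is a geodesic disk centered at some $p\in\s^2$ and that $u$ is rotationally symmetric about $p$. In particular $\Ome$ is a geodesic ball, which is exactly the conclusion of the First Schiffer Conjecture D in $\s^2$. I do not anticipate any real obstacle here: the entire content of the corollary has already been packaged in Corollary~\ref{cor:bcn} (and ultimately in Theorem~\ref{th:simplyconnected} together with the candidate family of Example~1, Proposition~\ref{prop:reparam}), and the only step is the trivial check that the linear nonlinearity lies in the admissible class. If one wished to be slightly more careful, one could note that $\lambda$ being the \emph{first} eigenvalue is not even needed for the geometric conclusion once positivity of $u$ is assumed, but it is precisely the first-eigenvalue hypothesis that guarantees such a positive solution exists in the first place, which is why the conjecture is phrased that way.
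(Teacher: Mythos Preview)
Your proposal is correct and follows essentially the same route as the paper: the paper simply states that ``the previous considerations lead us to'' the corollary, meaning one applies Theorem~\ref{th:simplyconnected} together with the candidate family built in Section~\ref{Examples} for $f(t)=\lambda t$ (Proposition~\ref{prop:reparam}), which is exactly what you do---either directly, or equivalently via Corollary~\ref{cor:bcn} after the trivial verification that $f(x)=\lambda x$ satisfies~\eqref{eq:hypo}.
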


A last application concerns the case where $f(x)=x-x^3$ which is the usual Allen-Cahn
non-linearity. Actually such $f$ does not satisfy \eqref{eq:hypo} since $f$ is negative on
$(1,\infty)$ but $f(x)\ge xf'(x)$ for $x\ge 0$. So if $u$ is solution of \eqref{eq:oep} on
$\Ome\subset\s^2$ and $M=\max u$, the maximum principle implies that $f(M)>0$ and $M<1$.
We can then consider a function $g$ which coincides with $f$ on $[0,M]$ and satisfying
\eqref{eq:hypo}. As a consequence $\Ome$ is a $g$-extremal domain. We thus obtain
\begin{corollary}
If $f(x)=x-x^3$, a $f$-extremal disk in $\s^2$ with $C^2$ boundary is a geodesic disk.
\end{corollary}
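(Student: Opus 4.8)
The plan is to reduce the Allen--Cahn case $f(x)=x-x^3$ to the situation already covered by Corollary~\ref{cor:bcn} via a truncation argument. First I would observe that although $f(x)=x-x^3$ fails hypothesis~\eqref{eq:hypo} --- it becomes negative on $(1,\infty)$ --- the crucial inequality $f(x)\ge xf'(x)$ does hold for all $x\ge 0$, since $f(x)-xf'(x) = (x-x^3)-x(1-3x^2) = 2x^3 \ge 0$. So the only obstruction is the sign of $f$ itself, and this can only matter where $u$ is large.

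Next I would invoke the maximum principle to control the range of $u$. Let $u$ be a solution of \eqref{eq:oep} on a topological disk $\Ome\subset\s^2$ with $C^2$ boundary, and let $M=\max_{\overline\Ome} u$, attained at an interior point $\bar p$ (it is interior since $u=0$ on $\partial\Ome$ and $u>0$ inside). At $\bar p$ we have $\Delta u(\bar p)\le 0$, hence $f(M)=-\Delta u(\bar p)\ge 0$, which forces $M\le 1$; in fact $M<1$ since $f(1)=0$ would give $u$ constant. Thus $u$ takes values only in $[0,M]$ with $M<1$, precisely the region where $f$ is positive and well-behaved.

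Then I would modify $f$ outside the relevant range: choose a $C^1$ function $g:\R\to\R$ with $g\equiv f$ on $[0,M]$ and such that $g$ satisfies \eqref{eq:hypo}, i.e.\ $g(x)>0$ and $g(x)\ge xg'(x)$ for all $x\in\R_+^*$. Such $g$ exists: on $[0,M]$ set $g=f$ (which is already positive there and satisfies the derivative inequality), and extend to $[M,\infty)$ by any smooth positive continuation maintaining $g(x)\ge xg'(x)$, for instance continuing so that $g$ is eventually constant or grows sublinearly; one should check the matching at $x=M$ is compatible with $C^1$ regularity and the inequality, which is routine. Since $u$ solves $\Delta u + f(u)=0$ and $u\in[0,M]$, we have $f(u)=g(u)$ pointwise, so $u$ is also a solution of $\Delta u + g(u)=0$ with the same boundary data; hence $\Ome$ is a $g$-extremal disk.

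Finally, apply Corollary~\ref{cor:bcn} to $g$: since $g$ satisfies \eqref{eq:hypo}, $\Ome$ must be a geodesic disk and $u$ rotationally symmetric. This completes the proof. The only step requiring genuine care is the explicit construction of the truncation $g$ --- one must verify simultaneously positivity, the inequality $g(x)\ge xg'(x)$, and $C^1$ gluing at $x=M$ --- but this is elementary analysis rather than a conceptual obstacle; all the real content is in Corollary~\ref{cor:bcn}.
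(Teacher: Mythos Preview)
Your proposal is correct and follows essentially the same approach as the paper: observe that $f(x)-xf'(x)=2x^3\ge 0$, use the maximum principle to get $M<1$, replace $f$ by a function $g$ satisfying \eqref{eq:hypo} that agrees with $f$ on $[0,M]$, and apply Corollary~\ref{cor:bcn}. The paper presents this argument as a short paragraph preceding the corollary rather than as a formal proof, with slightly less detail than you give on the construction of $g$ and on why $M<1$ is strict.
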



\subsection{Proof of Theorem A}

Once the quadratic form $Q$ will be defined, we will consider its $(2,0)$ part $\boP$: if
$z$ is a local conformal parameter, $\boP = P(z) dz^2$ where $P(z)=Q(\pz,\pz)$. Properties
of $Q$ can then be deduced from properties of $\boP$ since $Q = \mathcal P +
\overline{\mathcal P}$ (cf. \cite{Ho}). 

Let $\Omega \subset (M,g) $ be a bounded domain with $C^2$ boundary $\partial \Omega$. Let
$u$ be a solution to the problem~\eqref{eq:oep2}.

Let $(v_{q,w,a})_{(q,w,a)\in N}$ be a candidate family
associated to the OEP~\eqref{eq:oep}, we recall that $N=TM\times \R_+\setminus\{(q,0,0)\in TM\times
\R;\ q\in M\}$. For any $x\in \Omega$, let us define the symmetric bilinear form $Q_x : T_x M
\times T_x M \to \R $ given by 
$$
Q_x := \nabla^2 u (x) - \nabla^2 v_{x,\nabla u(x),u(x)} (x)
$$
where $\nabla^2$ is the Hessian operator. Observe that $Q$ is well-defined in $\overline\Omega$ and is
$C^1$ since $u$ is $C^3$ and $(v_{q,w,a})_{(q,w,a)\in N}$ is a smooth family. The
definition of $Q$ proves item 1 in Theorem A. 
\begin{claim}\label{cl:a}
$Q_x$ is traceless so either $Q_x = 0$ or $Q_x$ is a Lorentzian metric.
\end{claim}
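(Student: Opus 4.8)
The plan is to compute the trace of $Q_x$ directly using the PDE satisfied by $u$ and by the candidate solution $v_{x,\nabla u(x),u(x)}$ at the point $x$. Write $v = v_{x,\nabla u(x),u(x)}$ for brevity. By conditions (b) and (c) in Definition~\ref{Def:Candidate}, we have $v(x) = u(x)$ and $\nabla v(x) = \nabla u(x)$; moreover by condition (a), $x \in \overline{\Omega}_{x,\nabla u(x),u(x)}$, and since $u(x) > 0$ when $x \in \Omega$, in fact $x$ lies in the interior of that domain, so $v$ satisfies $\Delta v + f(v) = 0$ at $x$. First I would take the trace (with respect to $g$) of $Q_x$: since $\Tr_g \nabla^2 w = \Delta w$ for any function $w$, we get
$$
\Tr_g Q_x = \Delta u(x) - \Delta v(x) = -f(u(x)) + f(v(x)) = -f(u(x)) + f(u(x)) = 0,
$$
using $v(x) = u(x)$ in the last step. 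Hence $Q_x$ is traceless.

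Next I would invoke the standard linear-algebra fact that a traceless symmetric bilinear form on a $2$-dimensional inner product space is either zero or has signature $(1,1)$, i.e.\ is a Lorentzian metric. Concretely, in a $g$-orthonormal basis of $T_xM$, a traceless symmetric $2\times 2$ matrix has the form $\begin{pmatrix} a & b \\ b & -a \end{pmatrix}$, whose determinant is $-(a^2+b^2) \le 0$, with equality iff $a = b = 0$. So either $Q_x$ vanishes or its determinant is strictly negative, in which case it is nondegenerate with one positive and one negative eigenvalue — that is, a Lorentzian metric on $T_xM$. This completes the argument.

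There is essentially no obstacle here; the only point requiring a little care is checking that $v = v_{x,\nabla u(x),u(x)}$ genuinely satisfies its PDE \emph{at the point $x$} (rather than merely on the interior of its domain away from a possible boundary point), which is why the positivity $u(x)>0$ and the remark following Definition~\ref{Def:Candidate} — that $q \in \Omega_{q,w,a}$ when $a>0$ — are used. For $x \in \partial\Omega$ one has $u(x) = 0$, so $x \in \partial\Omega_{x,\nabla u(x),u(x)}$, and one extends the conclusion to $\overline{\Omega}$ by continuity of $Q$ (already noted in the text) together with the fact that being traceless is a closed condition. The whole claim is then immediate from the defining equation of $Q$ and the normalization conditions (b)--(c) on the candidate family.
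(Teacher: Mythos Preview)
Your proof is correct and follows essentially the same approach as the paper: both compute $\Tr_g Q_x = \Delta u(x) - \Delta v_{x,\nabla u(x),u(x)}(x) = -f(u(x)) + f(u(x)) = 0$ using item~(b) of the candidate family. You add some helpful elaboration the paper omits --- the explicit linear-algebra justification of the dichotomy ``$Q_x=0$ or Lorentzian'' and the care about interior versus boundary points --- but the core argument is identical.
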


\begin{proof}[Proof of Claim~\ref{cl:a}]
Let us compute the trace of $Q_x$ (w.r.t. $g$)  at each point $x \in \overline\Omega$:
$$
\Tr_g Q_x = \Delta u (x)- \Delta v_{x,\nabla u(x),u(x)} (x)) = - f(u(x))+f(v_{x,\nabla
u(x),u(x)}(x)) = -f(u(x)) +f(u(x)) = 0,
$$
where we have used item (b) of the family of candidate solutions. This finishes the proof
of Claim~\ref{cl:a}.
\end{proof}

A second point is easy to verify, it is statement~4 in Theorem A.

\begin{claim}\label{cl:b}
The boundary $\partial \Omega$ is a line of curvature of $Q$.
\end{claim}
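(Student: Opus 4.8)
The plan is to show that along $\partial\Omega$ the bilinear form $Q$ has the conormal $\vec\eta$ (equivalently the tangent direction to $\partial\Omega$) as an eigenvector. The key observation is that $Q$ is traceless (Claim~A), so it is enough to check one of the two off-diagonal conditions: if $\{T,\vec\eta\}$ is an orthonormal frame along $\partial\Omega$ with $T$ tangent, I need $Q(T,\vec\eta)=0$ at every boundary point, and then the tracelessness forces $Q(T,T)=-Q(\vec\eta,\vec\eta)$, so $T$ and $\vec\eta$ are the two eigendirections.

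First I would fix a boundary point $x\in\partial\Omega$ and unwind the definition $Q_x=\nabla^2u(x)-\nabla^2 v_{x,\nabla u(x),u(x)}(x)$. Write $v:=v_{x,\nabla u(x),u(x)}$; by items (a), (b), (c) of the candidate family together with $u(x)=0$, $x$ lies on $\partial\Omega_{v}$ (since $a=u(x)=0$), $v(x)=u(x)=0$, and $\nabla v(x)=\nabla u(x)$. So both $u$ and $v$ vanish at $x$, have the same gradient there, and both satisfy Dirichlet and constant-Neumann conditions on their respective boundaries. Since $\nabla u(x)=\nabla v(x)$ is normal to both $\partial\Omega$ and $\partial\Omega_v$ (because $u,v$ vanish on those boundaries), $\partial\Omega$ and $\partial\Omega_v$ are tangent at $x$, sharing the unit outward normal $\vec\eta$ and the tangent line $\R T$.

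The heart of the argument is to compute $\nabla^2 u(T,\vec\eta)$ and $\nabla^2 v(T,\vec\eta)$ at $x$ and see they agree. I would differentiate the Dirichlet condition: since $u\equiv 0$ on $\partial\Omega$, differentiating twice along $\partial\Omega$ gives $\nabla^2u(T,T)(x)+\langle\nabla u(x),\nabla_T T\rangle=0$ at $x$ — a relation on $\nabla^2u(T,T)$; more usefully, one uses that the tangential derivative of $\langle\nabla u,\vec\eta\rangle$ along $\partial\Omega$ vanishes (the Neumann datum is constant), which yields $\nabla^2 u(T,\vec\eta)(x)=-\langle\nabla u(x),\nabla_T\vec\eta\rangle=-\langle\nabla u(x),\vec\eta\rangle\,\langle\nabla_T\vec\eta,\vec\eta\rangle - (\text{tangential part})$; because $\nabla u(x)$ is purely normal and $\langle\nabla_T\vec\eta,\vec\eta\rangle=0$, this collapses to an expression depending only on $\partial\Omega$, $\vec\eta$, $T$ and the scalar $\langle\nabla u(x),\vec\eta\rangle=\alpha_u$. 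The same computation for $v$ on $\partial\Omega_v$ gives the same formula with $\alpha_u$ replaced by $\alpha_v=\langle\nabla v(x),\vec\eta\rangle$, and with $\partial\Omega$ replaced by $\partial\Omega_v$. But $\nabla v(x)=\nabla u(x)$ forces $\alpha_v=\alpha_u$, and the first-order tangency of $\partial\Omega$ and $\partial\Omega_v$ at $x$ makes the geometric terms ($\nabla_T\vec\eta$ decomposed appropriately, i.e. the geodesic curvature contribution against $T$) coincide at $x$. Hence $\nabla^2u(T,\vec\eta)(x)=\nabla^2 v(T,\vec\eta)(x)$, so $Q_x(T,\vec\eta)=0$, and with tracelessness this proves $T$ (hence $\partial\Omega$) is a line of curvature of $Q$.

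The main obstacle I anticipate is making precise the claim that the relevant second-order boundary data of $u$ and $v$ are determined purely by first-order contact of the two boundary curves plus the common values $(0,\nabla u(x))$; in particular I must be careful that the components of $\nabla^2 u$ and $\nabla^2 v$ that are \emph{not} controlled by the boundary conditions — essentially $\nabla^2u(\vec\eta,\vec\eta)$ — do not enter the computation of $Q(T,\vec\eta)$. That is why reducing to the single off-diagonal entry $Q(T,\vec\eta)$ via Claim~A is essential: the tangential-tangential and tangential-normal Hessian components are exactly the ones pinned down by differentiating $u|_{\partial\Omega}=0$ and $\langle\nabla u,\vec\eta\rangle=\alpha$ along $\partial\Omega$, and those differentiations only ever see $\partial\Omega$ and $\nabla u(x)$, which $u$ and $v$ share.
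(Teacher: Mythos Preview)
Your approach is essentially the paper's: differentiate the constant-Neumann condition tangentially, use that $\nabla u$ is purely normal along $\partial\Omega$, and invoke tracelessness. But you stop one step short and then introduce an unnecessary (and, as stated, incorrect) worry.

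Concretely: from $0=T\langle\nabla u,\vec\eta\rangle=\nabla^2u(T,\vec\eta)+\langle\nabla u,\nabla_T\vec\eta\rangle$ you get $\nabla^2u(T,\vec\eta)=-\langle\nabla u,\nabla_T\vec\eta\rangle$. Since $\nabla u(x)=\alpha\,\vec\eta(x)$ and $\langle\vec\eta,\nabla_T\vec\eta\rangle=0$ (unit length), this is already \emph{zero}, not merely ``an expression depending on $\partial\Omega$ and $\alpha_u$''. The exact same computation along $\partial\Omega_v$ gives $\nabla^2v(T_v,\vec\eta_v)(x)=0$, and tangency of the two boundaries at $x$ identifies $(T_v,\vec\eta_v)=(T,\vec\eta)$ there. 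Hence $Q_x(T,\vec\eta)=0-0=0$ directly. This is exactly the paper's proof.

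Your ``main obstacle'' paragraph is where the overcomplication bites. You propose that first-order tangency of $\partial\Omega$ and $\partial\Omega_v$ forces the geodesic-curvature contributions $\langle\nabla_T\vec\eta,T\rangle$ and $\langle\nabla_{T_v}\vec\eta_v,T_v\rangle$ to agree at $x$. That is false in general: curvature is second-order and is not determined by first-order contact. Fortunately you never need it, because those curvature terms are multiplied by $\langle\nabla u,T\rangle=0$ (respectively $\langle\nabla v,T_v\rangle=0$) and so drop out entirely. Once you notice that both mixed Hessians vanish separately, the obstacle evaporates and the proof is three lines.
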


\begin{proof}[Proof of Claim~\ref{cl:b}]
Let $\vec \tau $ the unit tangent vector field along $\partial \Omega$. Since
$\meta{\nabla u}{\vec \eta} = \alpha $ is constant along $\partial \Omega$, if we
differentiate with respect to $\vec \tau$ we get 
$$
0 = \meta{\nabla _{\vec \tau} \nabla u}{\vec \eta} + \meta{\nabla u}{\nabla _{\vec\tau}\vec\eta} 
= \nabla ^2 u (\vec\tau,\vec\eta) + \alpha \meta{\vec\eta}{\nabla _{\vec\tau}\vec\eta} 
=  \nabla ^2 u (\vec\tau,\vec\eta) ,
$$
where we have used that $\meta{\vec\eta}{\nabla _{\vec\tau}\vec\eta} =0$ (observe
we use $\partial \Omega \in C^2$). Note that the
above holds for any solution satisfying the boundary condition, as it does the candidate
family, hence
$$
Q_x (\vec\tau (x),\vec\eta(x)) = \nabla ^2 u (x) (\vec\tau (x),\vec\eta(x)) -\nabla ^2
v_{x,\nabla u(x),u(x)}(x)(\vec\tau (x),\vec\eta(x)) = 0,
$$
that is, $\partial \Omega$ is a line of curvature of $Q$ since $Q$ is trace free.
\end{proof}

Now, we study the behavior of $Q $ near a point where $Q_x = 0$. To do this we introduce $z \in \mathcal
U \subset \Omega$ a local conformal parameter in $\mathcal U$, i.e., there exists a
positive function $\lambda \in C^2 (\mathcal U)$ such that $ g = \lambda(z) |dz|^2 $. We
have a first observation.

\begin{lemma}\label{Lemma}
Given $v \in C^2 (\mathcal U)$ a solution to $\Delta v+f(v)=0$, consider the quadratic differential 
\begin{equation}\label{Quadratic}
\mathcal H _v = H _v(z) \, dz^2 = (\nabla ^2 v)(\pz ,\pz) \, dz ^2.
\end{equation}

Then, it holds
\begin{equation}\label{Eq:Hessz}
(H_v)_{\zb} = - \frac{\lambda}{4}\left(  f' (u)  +2 K_g \right)\langle\nabla  v , \pz \rangle,
\end{equation}
where  $K_g$ denote the Gaussian curvature of $g$.
\end{lemma}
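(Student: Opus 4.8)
The plan is to do the computation directly in the conformal chart. Writing $g=\lambda|dz|^2$, the Laplace--Beltrami operator is $\Delta=\frac{4}{\lambda}\,\pz\pzb$, so the equation $\Delta v+f(v)=0$ becomes $v_{zz\zb}$'s building block $v_{z\zb}=-\frac{\lambda}{4}f(v)$; moreover the only non-vanishing Christoffel symbols of a conformal metric are $\Gamma^z_{zz}=(\log\lambda)_z$ and its conjugate, so that
\[
H_v=(\nabla^2 v)(\pz,\pz)=v_{zz}-(\log\lambda)_z\,v_z .
\]

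First I would apply $\pzb$ to this identity, which gives
\[
(H_v)_{\zb}=v_{zz\zb}-(\log\lambda)_{z\zb}\,v_z-(\log\lambda)_z\,v_{z\zb}.
\]
Then I would substitute the equation into the first and third terms: since mixed partials commute, $v_{zz\zb}=\pz(v_{z\zb})=-\frac14\bigl(\lambda_z f(v)+\lambda f'(v)\,v_z\bigr)$, while $(\log\lambda)_z\,v_{z\zb}=-\frac{\lambda_z}{4}f(v)$, so the two $\lambda_z f(v)$ contributions cancel and one is left with $-\frac{\lambda}{4}f'(v)\,v_z$. For the surviving term I would invoke Liouville's equation for the Gaussian curvature of $g=\lambda|dz|^2$, which expresses $(\log\lambda)_{z\zb}$ as a fixed multiple of $\lambda K_g$ and thus turns $-(\log\lambda)_{z\zb}\,v_z$ into a multiple of $K_g v_z$. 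Recalling finally that $\langle\nabla v,\pz\rangle=dv(\pz)=v_z$, collecting the two contributions yields \eqref{Eq:Hessz}.

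A conceptual reason the right-hand side must have this shape is the Ricci (Bochner) commutation identity: $\pzb$ of the $(2,0)$-part of $\nabla^2 v$ is, up to metric factors, $\pz(\Delta v)$ together with a curvature term built from $\mathrm{Ric}(\nabla v,\cdot)$; in dimension two $\mathrm{Ric}=K_g\,g$, so one inevitably produces one term proportional to $\pz(\Delta v)=-f'(v)\,v_z$ and one proportional to $K_g\,v_z$. The direct computation above merely pins down the numerical constants.

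The calculation is entirely elementary, and the only point that genuinely requires care is the consistent bookkeeping of sign and normalization conventions --- for the (geometer's) Laplacian, for the Wirtinger operators $\pz,\pzb$, and for $K_g$ through Liouville's equation --- since a slip in any one of them alters a sign in \eqref{Eq:Hessz}. It is also worth stressing that \eqref{Eq:Hessz} is a pointwise identity valid for \emph{every} solution $v$ of $\Delta v+f(v)=0$; this is precisely what allows it to be applied afterwards both to $u$ itself and to the candidate solutions, which at a contact point agree with $u$ to order $\ge 2$, so that $f'(v)$ there coincides with $f'(u)$.
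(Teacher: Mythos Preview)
Your argument is correct and close in spirit to the paper's, but packaged differently. The paper works invariantly: it writes $(H_v)_{\zb}=\langle\nabla_{\pzb}\nabla_{\pz}\nabla v,\pz\rangle$, commutes the covariant derivatives to produce the curvature term $\langle R(\pzb,\pz)\nabla v,\pz\rangle=-\tfrac{\lambda K_g}{2}\langle\nabla v,\pz\rangle$, and uses $\nabla_{\pz}\pzb=0$ together with $\langle\nabla_{\pzb}\nabla v,\pz\rangle=\tfrac{\lambda}{4}\Delta v$ to turn the remaining piece into $\tfrac{\lambda}{4}\pz(\Delta v)$. You instead expand $H_v=v_{zz}-(\log\lambda)_z v_z$ via the explicit Christoffel symbol $\Gamma^z_{zz}=(\log\lambda)_z$, differentiate in $\zb$, feed in the PDE $v_{z\zb}=-\tfrac{\lambda}{4}f(v)$, and bring in the curvature through Liouville's equation $(\log\lambda)_{z\zb}=-\tfrac{\lambda}{2}K_g$. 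The two routes are equivalent---yours is the coordinate incarnation of the Ricci commutation identity you mention at the end---and each has its virtue: the paper's makes the geometric origin of the $K_g$ term transparent, while yours is more self-contained and requires no curvature-tensor bookkeeping. Your closing observation is also on point: the $f'(u)$ in the displayed formula is really $f'(v)$, and it is precisely the first-order contact $u(z_0)=\bar v(z_0)$ that makes the two agree when the lemma is later applied simultaneously to $u$ and to a candidate $\bar v$.
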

\begin{proof}
First, note that
$\nabla _\pz \pzb = \nabla _{\pzb} \pz = 0$ since $z$ is a local conformal parameter for
$g$. Moreover, a straightforward computation shows
$$ \langle\nabla _{\pzb}\nabla v , \pz\rangle = \dfrac{\lambda}{4} \Delta v .$$

Then, 
\begin{equation*}
\begin{split}
(H_v)_{\zb} &= \pzb\left( (\nabla ^2 v)(\pz ,\pz) \right) 
= \pzb\left( \langle\nabla _\pz \nabla v,\pz\rangle \right) 
= \langle\nabla _{\pzb} \nabla _\pz \nabla v,\pz\rangle \\
&= \lan\nabla _{\pz} \nabla _{\pzb} \nabla  v,\pz\ran + \lan R(\pzb ,\pz ) \nabla v ,\pz\ran \\ 
& = \pz\left( \lan \nabla _{\pzb} \nabla v,\pz\ran \right) - \lan \nabla _{\pzb} \nabla
v,\nabla _{\pz}\pz\ran + \lan R(\pzb ,\pz ) \nabla v ,\pz\ran \\
&= \pz\left( \dfrac{\lambda}{4} \Delta v \right) -\frac{\lambda _z}{\lambda}\lan \nabla
_{\pzb} \nabla v,\pz\ran - \frac{\lambda K _g}{2} \lan \nabla v,\pz\ran \\
 & = \dfrac{\lambda}{4} \pz\left( \Delta v \right) - \frac{\lambda K _g}{2} \lan \nabla
 v,\pz\ran = \frac{\lambda}{4}\left( \lan\nabla \Delta v , \pz \ran -2 K_g \, \lan\nabla v , \pz \ran
 \right) ,
\end{split}
\end{equation*}
now, using that $\Delta v = - f (v)$ we get $\nabla \Delta v = - f' (v) \nabla v$, and
\eqref{Eq:Hessz} holds.
\end{proof}

Now we introduce $\boP$ the $(2,0)$ part of $Q$: \textit{i.e.} $\boP=Q_z(\pz,\pz)dz^2=P(z)
dz^2$ where $z$ is a local conformal parameter. Since $Q$ is trace free, we have
$$
Q = \boP + \overline{\boP} .
$$

\begin{claim}\label{cl:c}
The function $P$ satisfies $P_\zb(z)=\beta(z)\overline P(z)$ where $\beta$ is some
continuous complex function.
\end{claim}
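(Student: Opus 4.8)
The plan is to express $P(z) = Q_z(\pz,\pz) = H_u(z) - H_{v_0}(z)$, where $v_0 = v_{z,\nabla u(z), u(z)}$ is the candidate solution agreeing with $u$ to first order at the base point $z$, and then differentiate in $\zb$ using Lemma \ref{Lemma}. Applying \eqref{Eq:Hessz} to both $u$ and $v_0$ gives
$$
P_\zb(z) = -\frac{\lambda}{4}\left(f'(u)+2K_g\right)\lan\nabla u,\pz\ran + \frac{\lambda}{4}\left(f'(v_0)+2K_g\right)\lan\nabla v_0,\pz\ran .
$$
The key point is that by conditions (b) and (c) in Definition \ref{Def:Candidate}, at the base point $z$ one has $v_0(z)=u(z)$ and $\nabla v_0(z)=\nabla u(z)$; hence $f'(v_0(z))=f'(u(z))$ and $\lan\nabla v_0,\pz\ran = \lan\nabla u,\pz\ran$ \emph{at} $z$. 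So the two terms above cancel and $P_\zb(z)=0$ — but only at that one point. The subtlety is that $P_\zb$ is a genuine function of $z$, and when we differentiate $H_{v_0}$ in $\zb$ we must remember that the candidate solution $v_{z,\nabla u(z),u(z)}$ itself varies with the base point $z$, which is exactly the content of condition (d): as $z$ ranges over $\overline\Ome_0$, the solution stays equal to the fixed $v_0$. This is what makes the expression $H_{v_0}(z)$ — meaning $H$ of the fixed solution $v_0=v_{z_0,\nabla u(z_0),u(z_0)}$ evaluated at a nearby point $z$ — legitimate to differentiate via Lemma \ref{Lemma}.

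Concretely, I would argue locally: fix a point $z_0$ where we want the identity, set $v_0 = v_{z_0,\nabla u(z_0),u(z_0)}$, and on a neighborhood where $z_0\in\overline\Ome_0$ write $P(z) = H_u(z) - H_{v_0}(z)$ using condition (d). Lemma \ref{Lemma} then applies to each of $H_u$ and $H_{v_0}$ as honest solutions of $\Delta\cdot + f(\cdot)=0$, yielding
$$
P_\zb(z) = -\frac{\lambda}{4}\left(f'(u)+2K_g\right)\lan\nabla u,\pz\ran + \frac{\lambda}{4}\left(f'(v_0)+2K_g\right)\lan\nabla v_0,\pz\ran
= \frac{\lambda}{4}\Big[\big(f'(v_0)-f'(u)\big)\lan\nabla u,\pz\ran + \big(f'(v_0)+2K_g\big)\lan\nabla(v_0-u),\pz\ran\Big].
$$
Now I would like to write this as $\beta(z)\overline P(z)$. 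This requires showing that the bracketed quantity is divisible, in a continuous way, by $\overline P = \overline{Q_z(\pz,\pz)}$. The mechanism: near $z_0$, write $v_0 - u$ vanishing to first order at $z_0$, so $v_0 - u = O(|z-z_0|^2)$ and $\nabla(v_0-u) = O(|z-z_0|)$; likewise $f'(v_0)-f'(u) = O(|z-z_0|)$ (here Lipschitz-ness of $f$, upgraded — recall $f$ is $C^1$ in our applications — gives the needed regularity). On the other hand $P(z_0) = H_u(z_0)-H_{v_0}(z_0) = Q_{z_0}(\pz,\pz)$; if $Q_{z_0}=0$ this is zero but if $Q_{z_0}\neq 0$ then $\overline P(z_0)\neq 0$ and we may simply take $\beta = P_\zb/\overline P$, continuous near $z_0$. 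The real work is near a zero of $Q$: there one invokes a division-type lemma (the $\overline\partial$-regularity / Carleman-type estimate used in Hopf-theorem arguments, as in \cite{Ho,GM}) showing that since $P_\zb$ vanishes wherever $\overline P$ does, to at least the same order, the ratio extends continuously.

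The main obstacle I anticipate is precisely this last divisibility step at the zeroes of $Q$: one must control $P_\zb$ by $\overline P$ uniformly, not just pointwise. The pointwise vanishing ($P_\zb(z_0)=0$ when $P(z_0)=0$) is immediate from conditions (b), (c), but to get a \emph{continuous} $\beta$ one needs a quantitative version — essentially that the defect between $u$ and its best candidate approximation $v_0$ controls both $\nabla(v_0-u)$ and $f'(v_0)-f'(u)$ by the same modulus that controls $P$ itself. This is where the smooth-family hypothesis in Definition \ref{Def:Candidate} (the $C^1$ dependence of $\Phi$ on $(p,q)$) does the heavy lifting: it lets one Taylor-expand $v_{q,w,a}$ jointly in the base point and the 1-jet data, so that $H_u - H_{v_0}$ and its $\zb$-derivative are both expressed through the same $C^1$ data. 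Once $\beta$ is continuous, Claim \ref{cl:c} follows, and the subsequent similarity-principle machinery (negative index of the line fields, isolated zeroes) proceeds as in \cite{GM}.
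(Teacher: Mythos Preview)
There is a genuine gap: you misapply condition (d). You fix $z_0$, set $v_0=v_{z_0,\nabla u(z_0),u(z_0)}$, and then claim that on a neighborhood of $z_0$ one has $P(z)=H_u(z)-H_{v_0}(z)$ ``using condition (d)''. But condition (d) only says that if you start from a \emph{candidate} solution $v_0$ and move the base point to any $q\in\overline\Ome_0$, the candidate determined by $(v_0(q),\nabla v_0(q))$ is again $v_0$. It says nothing about $u$: for $z\neq z_0$ the candidate appearing in $Q_z$ is $v_{z,\nabla u(z),u(z)}$, and since $(u(z),\nabla u(z))\neq (v_0(z),\nabla v_0(z))$ in general, this is \emph{not} $v_0$. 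Hence your formula $P(z)=H_u(z)-H_{v_0}(z)$ is false away from $z_0$, and the subsequent computation of $P_{\zb}$ via Lemma~\ref{Lemma} applied to the fixed pair $(u,v_0)$ does not compute $P_{\zb}$. Everything downstream (the divisibility discussion, the attempt to define $\beta$ as a ratio) is built on this incorrect identity.

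What the paper actually does is differentiate $P(z)=\nabla^2u(z)(\pz,\pz)-\nabla^2v(z,z,\nabla u(z),u(z))(\pz,\pz)$ honestly, treating all four slots $(z,q,w,a)$ of $v$ as varying. The $(H_u)_{\zb}$ and $(H_{\bar v})_{\zb}$ contributions cancel by Lemma~\ref{Lemma} (this is the part you got right), but there remain three extra terms coming from $D_q\nabla^2v$, $D_w\nabla^2v$, and $D_a\nabla^2v$. Condition (d) is then used correctly: one repeats the very same computation with $u$ replaced by $\bar v$, and since $\bar v$ \emph{is} a candidate solution, its associated $\boP^{\bar v}$ vanishes identically. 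The $D_q$ and $D_a$ terms are identical in the two computations (they only see the first-order data at $z_0$, which agree), so subtracting kills them and leaves
\[
P_{\zb}(z_0)=-D_w\nabla^2v(z_0,z_0,w_0,u_0)(\pz,\pz)\big(\nabla_{\pzb}\nabla u-\nabla_{\pzb}\nabla\bar v\big).
\]
A short computation then shows $\nabla_{\pzb}\nabla u-\nabla_{\pzb}\nabla\bar v=\frac2\lambda\overline P(z_0)\pz$ at $z_0$, which gives $P_{\zb}(z_0)=\beta(z_0)\overline P(z_0)$ with an explicit continuous $\beta$. No division argument near zeroes of $Q$ is needed; the identity is exact pointwise.
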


\begin{proof}[Proof of Claim~\ref{cl:c}]
Let us fix some $z_0$ where we are going to compute $P_\zb$, we denote by $u_0=u(z_0)$
and $w_0=\nabla u(z_0)$. We finally define $\bar v$ to be $v_{z_0,w_0, u_0}$. $u$ and
$\bar v$ are then two solutions to
\eqref{eq:oep} which satisfy $u(z_0)=\bar v(z_0)$ and $\nabla u(z_0)=\nabla \bar
v(z_0)$. In the following computations, we use the notation $v_{q,w,a}(z)=v(z,q,w,a)$
since we will compute derivatives with respect to all these parameters (this computation
can be done because of the smoothness of a family of candidate solutions).
Hence from \eqref{Eq:Hessz}, we get
\begin{align*}
P_{\zb}(z_0)&=(H_u)_{\zb}(z_0)-(H_{\bar v})_{\zb}(z_0)-D_q\nabla^2v(z_0,z_0,w_0,u_0)(\pz,\pz)(\pzb )\\
&\quad-D_w\nabla^2v(z_0,z_0,w_0,u_0)(\pz,\pz)(\nabla_\pzb \nabla u)
- D_a\nabla^2v(z_0,z_0,w_0,u_0) (\pz,\pz )(\pzb u)\\
&=-\frac\lambda 4(f'(u_0)+2K_g)\lan w_0,\pz\ran+\frac\lambda 4(f'(u_0)+2K_g)\lan w_0,\pz\ran\\
&\quad-D_q\nabla^2v(z_0,z_0,w_0,u_0)(\pz,\pz)(\pzb )
-D_w\nabla^2v(z_0,z_0,w_0,u_0)(\pz,\pz)(\nabla_\pzb \nabla u)\\
&\quad- D_a\nabla^2v(z_0,z_0,w_0,u_0) (\pz,\pz )\meta{w_0}{\pzb}\\
&=-D_q\nabla^2v(z_0,z_0,w_0,u_0)(\pz,\pz)(\pzb )
-D_w\nabla^2v(z_0,z_0,w_0,u_0)(\pz,\pz)(\nabla_\pzb \nabla u)\\
&\quad- D_a\nabla^2v(z_0,z_0,w_0,u_0) (\pz,\pz )\meta{w_0}{\pzb}\,.\\
\end{align*}

We can do the same computation by replacing $u$ by $\bar v$ but in this case, since $\bar
v$ is a candidate solution $\bar v=v_{x,\nabla \bar v(x),\bar v(x)}$ for any $x$, the
associated $\boP ^{\bar v}$ vanishes. Let us explain this. Consider the symmetric
quadratic form 
$$
Q^{\bar v}_x := \nabla^2 \bar v (x) - \nabla^2 v_{x,\nabla \bar v(x),\bar v(x)} (x), 
$$and hence we can write 
$$
Q ^{\bar v}= \boP ^{\bar v}+ \overline{\boP^{\bar v}} ,
$$but, since $\bar v$ is a candidate solution, we have $\boP ^{\bar v} \equiv 0$. Hence, we obtain
\begin{align*}
0&=-D_q\nabla^2v(z_0,z_0,w_0,u_0)(\pz,\pz)(\pzb )-D_w\nabla^2v(z_0,z_0,w_0,u_0)(\pz,\pz)(\nabla_\pzb \nabla \bar v)\\
&\quad- D_a\nabla^2v(z_0,z_0,w_0,u_0) (\pz,\pz )\lan w_0,\pzb\ran\,.\\
\end{align*}	
So gathering the above two equations gives
$$
P_{\zb}(z_0)=-D_w\nabla^2v(z_0,z_0,w_0,u_0)(\pz,\pz)(\nabla_\pzb \nabla u-\nabla_\pzb \nabla \bar v)\,.
$$

We also have
\begin{align*}
(\nabla_\pzb \nabla u-\nabla_\pzb \nabla \bar v)(z_0)&=\frac2\lambda\Big(\lan\nabla_\pzb
\nabla u-\nabla_\pzb \nabla \bar v,\pzb\ran\pz+
\lan\nabla_\pzb \nabla u-\nabla_\pzb \nabla \bar v,\pz\ran\pzb\Big)\\
&=\frac2\lambda\Big((\nabla^2u(\pzb,\pzb)-\nabla^2\bar
v(\pzb,\pzb))\pz+(\nabla^2u(\pzb,\pz)-\nabla^2\bar v(\pzb,\pz))\pzb\Big)\\
&=\frac2\lambda\Big(\overline P(z_0)\pz+\frac\lambda4(\Delta u(z_0)-\Delta \bar v(z_0))\pzb\Big)\\
&=\frac2\lambda\Big(\overline P(z_0)\pz+\frac\lambda4(-f(u_0)+f(u_0))\pzb\Big)
=\frac2\lambda\overline P(z_0)\pz\,.
\end{align*}
Thus we obtain
$$
P_\zb(z_0)=-D_w\nabla^2v(z_0,z_0,w_0,u_0)(\pz,\pz)(\frac2\lambda\overline
P(z_0)\pz)=-\frac2\lambda D_w\nabla^2v(z_0,z_0,w_0,u_0)(\pz,\pz)(\pz)\overline P(z_0)
$$
which is exactly $P_\zb(z_0)=\beta(z_0)\overline P(z_0)$ for $\beta(z_0)=-\frac2\lambda
D_w\nabla^2v(z_0,z_0,w_0,u_0)(\pz,\pz)(\pz)$.
\end{proof}

In order to study the behaviour of $Q$ near a vanishing point we will use two lemmas.

\begin{lemma}\label{lem:isolate1}
Let $f:U\subset \C\to \C$ be a complex function defined in an open subset $U$. Assume that 
$$
\left|f_\zb\right|\le h|f|
$$
where $h$ is a continuous non-negative function. Assume further that $z=z_0\in U$ is a
zero of $f$. Then either $f\equiv 0$ in a neighborhood $V\subset U$ of $z_0$ or there is
$k\in \N$ such that 
$$
f(z)=(z-z_0)^k \tilde f(z), \ z\in V\,,
$$
where $\tilde f$ is a continuous function with $\tilde f(z_0)\neq 0$.
\end{lemma}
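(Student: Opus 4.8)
The plan is to reduce the statement to a classical fact about solutions of elliptic first-order systems of Carleman–Bers–Vekua type. The hypothesis $|f_{\bar z}|\le h|f|$ means that $f$ satisfies a pseudo-analytic inequality; the conclusion is the Carleman similarity principle, which says that near a zero such an $f$ behaves like a holomorphic function times a non-vanishing continuous factor. First I would record the trivial case: if $f$ vanishes on a neighborhood of $z_0$ there is nothing to prove, so assume $f\not\equiv 0$ near $z_0$. Then I would define, on a small disk $V$ around $z_0$, the quantity
$$
\beta(z)=\begin{cases}\dfrac{f_{\bar z}(z)}{f(z)}&\text{if }f(z)\neq 0,\\[2mm] 0&\text{if }f(z)=0,\end{cases}
$$
which by hypothesis satisfies $|\beta|\le h$, hence $\beta\in L^\infty(V)$ (in fact one only needs $\beta\in L^p$ for some $p>2$, which is immediate here since $h$ is continuous, thus bounded on the compact $\overline V$).

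The key step is the construction of a \emph{similarity factor}: I would solve the $\bar\partial$-equation $s_{\bar z}=\beta$ on $V$ by the Cauchy transform,
$$
s(z)=-\frac1\pi\int_V\frac{\beta(\xi)}{\xi-z}\,d\xi\,d\bar\xi,
$$
so that $s$ is continuous on $V$ (indeed Hölder continuous, by the standard mapping properties of the Cauchy transform with $L^\infty$ data on a bounded domain) and $s_{\bar z}=\beta$ in the distributional sense. Then $g:=e^{-s}f$ satisfies $g_{\bar z}=e^{-s}(f_{\bar z}-\beta f)=0$ weakly on $V$, so by Weyl's lemma $g$ is holomorphic on $V$. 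Since $f\not\equiv 0$ and $e^{-s}$ never vanishes, $g\not\equiv 0$; as $g(z_0)=0$, there is $k\in\N$ and a holomorphic $\tilde g$ with $\tilde g(z_0)\neq 0$ such that $g(z)=(z-z_0)^k\tilde g(z)$ on $V$ (shrinking $V$ if necessary). Setting $\tilde f(z)=e^{s(z)}\tilde g(z)$ gives $f(z)=(z-z_0)^k\tilde f(z)$ with $\tilde f$ continuous and $\tilde f(z_0)=e^{s(z_0)}\tilde g(z_0)\neq 0$, as claimed.

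The one delicate point — the main obstacle — is the regularity bookkeeping in the $\bar\partial$-step: one must make sure that the Cauchy transform of an $L^\infty$ (or $L^p$, $p>2$) function is genuinely continuous and that the identity $s_{\bar z}=\beta$ holds in the distributional sense so that Weyl's lemma applies to $g$. This is classical (see Vekua's book on generalized analytic functions, or Bers' lecture notes), so I would simply cite it rather than reprove it. Everything else — the definition of $\beta$, the algebra $g=e^{-s}f$, the factorization of the holomorphic $g$ at its zero, and the reconstruction of $\tilde f$ — is routine once the similarity factor $s$ is in hand. I would also note that $k\ge 1$ automatically since $f(z_0)=0$, and that the construction is local, which is all that is needed for the application to $P$ in Claim~\ref{cl:c}.
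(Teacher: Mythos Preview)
Your argument via the Carleman--Bers similarity principle is correct and complete: defining $\beta=f_{\zb}/f$ (extended by $0$ on $\{f=0\}$, where the hypothesis forces $f_{\zb}=0$ pointwise), solving $s_{\zb}=\beta$ by the Cauchy transform, and observing that $g=e^{-s}f$ is weakly holomorphic is the standard route, and the regularity caveats you flag are exactly the ones that need to be checked.

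It is worth noting, however, that the paper gives no proof of this lemma at all: it simply cites \cite[Lemma~2.7.1]{J}. Judging from the argument the authors \emph{do} write out for the boundary analogue (Lemma~\ref{lem:isolate2}), Jost's proof proceeds differently, via a direct Cauchy-integral / Stokes computation: one integrates the $1$-form $\phi=\dfrac{f(z)}{z^r(z-w)}\,dz$ over an annular region, uses $d\phi=-\dfrac{f_{\zb}}{z^r(z-w)}\,dz\wedge d\zb$, and extracts the Taylor-type expansion of $f$ at $z_0$ from the resulting integral identities. Your approach packages the same analytic content (bounded $f_{\zb}/f$ and the Cauchy kernel) into a single potential $s$ and then invokes the factorization of a genuine holomorphic function, which is arguably cleaner conceptually; the Cauchy-integral route, by contrast, avoids Weyl's lemma and the product-rule bookkeeping for $e^{-s}f$, working only with continuity of $f$ and a uniform bound on $h$. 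Either approach suffices here.
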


This is \cite[Lemma 2.7.1, pp 75]{J}. Actually we will also use a version of this lemma
on the boundary. So let $D$ denote the unit disk in $\C$ and $D_\pm=\{z\in D \, |\,\, \pm\Im z>0\}$. We then have the following variant of the above lemma.

\begin{lemma}\label{lem:isolate2}
Let $f:\overline D_+\to \C$ be a continuous complex function which is $C^1$ in $D_+$ and
satisfies $f(z)\in \R$ if $z\in\R$. Assume that in $D_+$
$$
\left|f_\zb\right|\le h|f|
$$
where $h$ is a continuous non-negative function on $\overline D$. Assume further that $f(0)=0$. Then either
$f\equiv 0$ in a neighborhood $V$ of $0$ or there is $k\in \N$ such that 
$$
f(z)=z^k \tilde f(z), \ z\in V\,,
$$
where $\tilde f$ is a continuous function with $\tilde f(0)\neq 0$.
\end{lemma}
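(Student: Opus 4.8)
The plan is to reduce Lemma~\ref{lem:isolate2} to the interior statement Lemma~\ref{lem:isolate1} by a reflection (Schwarz-type) argument. First I would use the boundary hypothesis $f(z)\in\R$ for $z\in\R$ to extend $f$ to the full disk $D$ by setting $F(z)=f(z)$ for $z\in\overline D_+$ and $F(z)=\overline{f(\bar z)}$ for $z\in D_-$. The reality of $f$ on the real axis guarantees that these two definitions agree on $D\cap\R$, so $F$ is well-defined and continuous on $D$; moreover $F$ is $C^1$ on $D\setminus\R$. The key computation is that $F$ still satisfies a differential inequality of the same type away from $\R$: for $z\in D_-$, writing $z=\bar\zeta$ with $\zeta\in D_+$, one has $F_\zb(z)=\overline{f_\zb(\zeta)}$ (the conjugation swaps $\partial_z$ and $\partial_{\bar z}$), hence $|F_\zb(z)|=|f_\zb(\zeta)|\le h(\zeta)|f(\zeta)|=\tilde h(z)|F(z)|$ where $\tilde h(z)=h(\bar z)$ is continuous and non-negative on $D$ (extend $h$ by $\tilde h(z)=h(\bar z)$, still continuous since $h$ was continuous on $\overline D$).

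The remaining obstacle is that, a priori, $F$ is only continuous across $\R$, not $C^1$ there, so Lemma~\ref{lem:isolate1} does not apply directly. To handle this I would argue that the inequality $|F_\zb|\le \tilde h|F|$ in fact holds across $\R$ in a suitable weak sense, or—more robustly—invoke a representation/similarity-principle argument: the proof of Lemma~\ref{lem:isolate1} in \cite{J} proceeds by writing $f(z)=e^{g(z)}\phi(z)$ with $\phi$ holomorphic and $g$ obtained by solving a $\bar\partial$-problem with right-hand side $f_\zb/f$ (bounded by $h$) in $L^p$. That argument only needs $f_\zb/f\in L^p_{\mathrm{loc}}$, which is an integrability statement insensitive to a measure-zero set; since $F$ is continuous on $D$, $C^1$ off $\R$, and $|F_\zb|\le\tilde h|F|$ off $\R$, the ratio $F_\zb/F$ (defined off the zero set and off $\R$) is bounded by $\tilde h$ and hence lies in $L^\infty_{\mathrm{loc}}\subset L^p_{\mathrm{loc}}$, so the similarity principle gives $F(z)=(z-0)^k\tilde F(z)$ with $\tilde F$ continuous and $\tilde F(0)\neq 0$, unless $F\equiv 0$ near $0$.

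Finally I would restrict back: $f=F|_{\overline D_+}$, so either $f\equiv 0$ in a neighborhood $V\cap\overline D_+$ of $0$, or $f(z)=z^k\tilde f(z)$ on $V\cap \overline D_+$ with $\tilde f=\tilde F|_{\overline D_+}$ continuous and $\tilde f(0)\neq 0$, which is exactly the claimed conclusion. I expect the delicate point to be the justification that the extended function $F$ still obeys the hypotheses of the similarity principle across the real axis; the cleanest route is to note that one never differentiates $F$ transversally to $\R$ in that argument—only the $L^p$ bound on $F_\zb/F$ matters—and that bound is unaffected by the null set $\R$. An alternative, if one prefers to quote Lemma~\ref{lem:isolate1} verbatim, is to first mollify or to observe directly that $F$ is a (continuous, locally bounded) distributional solution of $F_\zb = b F$ with $b\in L^\infty_{\mathrm{loc}}$, $|b|\le\tilde h$, and then apply the interior lemma to $F$ on all of $D$.
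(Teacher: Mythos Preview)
Your approach is essentially the same as the paper's: both extend $f$ to the full disk by the Schwarz reflection $F(z)=\overline{f(\bar z)}$ on $D_-$ and extend $h$ by $h(\bar z)$, then reduce to the interior result from \cite{J}. The difference lies only in how the lack of $C^1$ regularity across $\R$ is handled. The paper does not invoke an $L^p$ similarity principle; instead it reruns the Cauchy--Pompeiu/Stokes computation from Jost's proof directly, applying Stokes separately on $W\cap D_+$ and $W\cap D_-$ for the form $\phi=\dfrac{f(z)}{z^r(z-w)}\,dz$ and observing that the two line integrals along $[-1,1]\cap W$ cancel because they carry opposite orientations and involve only the \emph{continuous} boundary values of $f$. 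This yields exactly the integral identity $\int_{\partial D_R(0)}\phi-\int_{\partial D_a(0)}\phi-\int_{\partial D_a(w)}\phi=\int_W d\phi$ needed in \cite[Lemma~2.7.1]{J}, after which the rest of Jost's argument goes through verbatim using only continuity of $f$ and the uniform bound on $h$.

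Your route is valid, but the phrase ``unaffected by the null set $\R$'' is not quite the right justification: the worry is not the Lebesgue measure of $\R$ but a possible singular contribution to the distributional $\bar\partial F$ supported on $\R$. What rules that out is precisely the continuity of $F$ across $\R$ together with the boundedness of $F_{\bar z}$ on each side --- and verifying this (e.g.\ by an $\epsilon$-strip limiting argument) is exactly the split-Stokes cancellation the paper carries out explicitly. So the two arguments meet at the same point; the paper's version is just more self-contained and avoids appealing to the $L^p$ theory.
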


\begin{proof}
For $c>0$ and $\zeta\in \C$, we denote by $D_c(\zeta)$ the disk in $\C$ of center $\zeta$
and radius $c$. Let $w\in D_R(0)\setminus\{0\}$, $R<1$, and define
$W=D_R(0)\setminus(D_a(0)\cup D_a(w))$, where $a<\min (|w|/2,R-|w|)$. 

Let us extend the definition of $f$ to $D_-$ by $f(z)=\overline{f(\zb)}$. Since $f$ is
real on $D\cap\R$, this gives us a continuous complex function in $\overline D$. If we
extend $h$ by $h(z)=h(\zb)$ we get in $D_-$, $|f_\zb|\le h|f|$. 

If $r\in \N$, we can then define a continuous complex $1$-form by
$$
\phi=\frac{f(z)}{z^r(z-w)}dz
$$
which is $C^1$ in $D_+\cup D_-$. Moreover $d\phi=-\frac{f_\zb}{z^r(z-w)}dz\wedge d\zb$. We
then have the following computation
\begin{align*}
\int_W d\phi&=\int_{W\cap D_+} d\phi+\int_{W\cap D_-} d\phi\\
&=\int_{\partial D_R(0)\cap D_+}\phi-\int_{\partial D_a(0)\cap D_+}\phi- \int_{\partial
D_a(w)\cap D_+}\phi+ \int_{[-1,1]\cap W}\phi\\
&\qquad+\int_{\partial D_R(0)\cap D_-}\phi-\int_{\partial D_a(0)\cap D_-}\phi- \int_{\partial
D_a(w)\cap D_-}\phi- \int_{[-1,1]\cap W}\phi\\
&=\int_{\partial D_R(0)}\phi-\int_{\partial D_a(0)}\phi- \int_{\partial D_a(w)}\phi\,.
\end{align*}

Now, the end of the proof is similar to the one of \cite[Lemma 2.7.1, pp 75]{J} which only
uses the continuity of $f$ and a uniform bound on $h$.
\end{proof}

We can now obtain statement~3 in Theorem~A.

\begin{claim}\label{cl:d}
Either $Q\equiv 0$ on $\overline\Ome$ or $Q$ has isolated zeroes.
Moreover, in the second case, the null directions of $Q$ determine on $\overline\Omega$
two $C^1$ line fields with isolated singularities of negative index.
\end{claim}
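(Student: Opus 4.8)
\textbf{Proof proposal for Claim~\ref{cl:d}.}
The plan is to work locally with the $(2,0)$-part $\boP = P(z)\,dz^2$ and apply the two unique-continuation lemmas (Lemmas~\ref{lem:isolate1} and~\ref{lem:isolate2}) to $P$. By Claim~\ref{cl:c} we have the pointwise inequality $|P_\zb| = |\beta(z)|\,|\overline P(z)| = |\beta(z)|\,|P(z)|$ with $\beta$ continuous, hence locally bounded; so $P$ satisfies the hypothesis $|P_\zb|\le h\,|P|$ of both lemmas with $h$ a continuous nonnegative function. At an \emph{interior} zero $x\in\Ome$, choosing a conformal chart centered at $x$, Lemma~\ref{lem:isolate1} gives that either $P\equiv 0$ near $x$ or $P(z)=(z-x)^k\tilde P(z)$ with $\tilde P$ continuous and $\tilde P(x)\ne 0$, for some $k\in\N$ (and in fact $k\ge 1$ since $P(x)=0$). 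At a \emph{boundary} zero $x\in\partial\Ome$, I first need a conformal chart straightening $\partial\Ome$ to a segment of $\R$ (possible since $\partial\Ome$ is $C^2$; this is where the $C^2$ hypothesis on $\partial\Ome$ enters), and I need that $P$ is real along the straightened boundary: this is exactly Claim~\ref{cl:b}, since the tangent direction being a null/eigendirection of $Q$ forces the off-diagonal term, i.e. the imaginary part of $P$ in an adapted chart, to vanish on $\partial\Ome$. Then Lemma~\ref{lem:isolate2} applies and gives the same dichotomy: either $P\equiv 0$ near $x$ in $\overline D_+$ or $P(z)=z^k\tilde P(z)$ with $\tilde P$ continuous, $\tilde P(0)\ne 0$.

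Next I upgrade the local statement to the global one. The set $Z=\{x\in\overline\Ome : Q_x=0\}=\{x : P(x)=0\}$ is closed. The local analysis shows that each point of $Z$ either has a neighborhood (in $\overline\Ome$) on which $P\equiv 0$, or is isolated in $Z$. Hence the interior of $Z$ in $\overline\Ome$ is open and closed; since $\overline\Ome$ is connected, if this interior is nonempty then $Z=\overline\Ome$, i.e. $Q\equiv 0$ on $\overline\Ome$. In the remaining case $Z$ has empty interior, so every point of $Z$ is isolated, and by compactness of $\overline\Ome$ the set $Z$ is finite; thus $Q$ has isolated zeroes. (Strictly, one also uses that the local representation with $\tilde P(x_0)\ne 0$ persists under change of conformal chart, which is immediate since $dz^2$ transforms by a nonvanishing factor and does not change the vanishing order; so $k$ is a well-defined geometric quantity, the \emph{order} of the zero.)

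Finally I treat the index of the two line fields. Away from $Z$, the form $Q$ is a Lorentzian metric (Claim~\ref{cl:a}), so it determines two transverse null line fields on $\overline\Ome\setminus Z$; these are $C^1$ because $Q$ is $C^1$ and nondegenerate there. Near an isolated interior zero $x_0$ with $P(z)=(z-x_0)^k\tilde P(z)$, the null directions are the two families of directions $\xi$ with $\mathrm{Re}\,(P(z)\,\xi^2)=0$, i.e. $\arg\xi = \tfrac12\big(\text{const}-\arg P(z)\big)+\tfrac{\pi}{4}\pmod{\pi}$ and likewise with a further $\tfrac{\pi}{2}$; as $z$ runs once around a small circle about $x_0$, $\arg P(z)$ increases by $2\pi k$, so each null direction rotates by $-\pi k$, giving index $-k/2<0$ for each of the two line fields. (The standard reference for this computation on traceless symmetric 2-tensors / quadratic differentials is \cite{Ho}; I will cite it and recall only the computation of the rotation number.) At an isolated boundary zero, the same computation applies to the doubled form $\tilde Q$ on the symmetrized surface: using $P(z)=z^k\tilde P(z)$ with $\tilde P$ real on $\R$, the Schwarz reflection $z\mapsto\overline{P(\bar z)}$ extends $P$ across the boundary with the same vanishing order, so the doubled line fields have a singularity of negative index $-k/2$ there as well. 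This establishes statement~3 and finishes the proof of Claim~\ref{cl:d}. The main obstacle is the boundary case: setting up a conformal chart that simultaneously straightens $\partial\Ome$ and in which Claim~\ref{cl:b} translates cleanly into "$P$ real on $\R$", so that Lemma~\ref{lem:isolate2} and the reflection argument for the index go through; the interior case is routine once Claim~\ref{cl:c} is in hand.
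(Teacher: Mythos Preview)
Your proposal is correct and follows essentially the same approach as the paper: apply Claim~\ref{cl:c} to get the Carleman-type inequality $|P_\zb|\le h|P|$, use Lemma~\ref{lem:isolate1} at interior zeroes and Lemma~\ref{lem:isolate2} at boundary zeroes (after observing via Claim~\ref{cl:b} that $P$ is real along $\partial\Ome$ in an adapted conformal chart), and read off the negative index from the local model $P(z)=z^k\tilde P(z)$. Your write-up is in fact somewhat more detailed than the paper's: you spell out the open-closed argument passing from local to global, you compute the index explicitly as $-k/2$, and you make the Schwarz-reflection step for boundary singularities explicit, whereas the paper simply says ``as above'' for the boundary case. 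One small wording issue: when you invoke Claim~\ref{cl:b} you say the tangent is a ``null/eigendirection''; it is an \emph{eigendirection} (line of curvature), which is what forces $Q(\vec\tau,\vec\eta)=0$ and hence $\Im P=0$ on $\partial\Ome$---calling it ``null'' would mean $Q(\vec\tau,\vec\tau)=0$, which is not what is being used.
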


\begin{proof}[Proof of Claim~\ref{cl:d}]
Let $x_0$ be a zero of $Q$. Assume that $x_0\in\Ome$ and choose some complex conformal
coordinate near $x_0$ such that $z=0$ is $x_0$. The $(2,0)$-part $\boP=P(z)dz^2$ vanishes
then at $0$. By Claim~\ref{cl:c} and Lemma~\ref{lem:isolate1}, either $P$ vanishes in a
neighborhood of $0$ or $P(z)=z^n\tilde f(z)$ for some $n\in \N$ and $\tilde f(0)\neq 0$.
In the second case, $0$ is an isolated zero of $P$ so is $x_0$ for $Q$. The writing
$z^n\tilde f(z)$ implies that the null directions of $Q$ has negative index around $x_0$.
We have then proved that either $Q\equiv 0$ or it has isolated zeroes in $\Ome$.

If $x_0\in\partial \Ome$, we choose some complex conformal coordinate $z\in\overline D_+$
near $x_0$ such that $z=0$ is $x_0$ and $z\in \R$ correspond to $\partial\Ome$. We then
have $\pz=\frac{\sqrt\lambda}2(\vec\tau+i\vec\eta)$ along $\overline D_+\cap\R$. By
Claim~\ref{cl:b}, this implies
$$
P=Q(\pz,\pz)=\frac\lambda4(Q(\vec\tau,\vec\tau)-Q(\vec\eta,\vec\eta)+2iQ(\vec\tau,\vec\eta))=
\frac\lambda4(Q(\vec\tau,\vec\tau)-Q(\vec\eta,\vec\eta))\in \R.
$$
As above, the conclusion follows using Lemma~\ref{lem:isolate2}.
\end{proof}

Next, we verify statement~2 in Theorem~A.

\begin{claim}\label{cl:e}
If $Q\equiv 0$ on $\Ome$ then $u\in\mathcal C_\alpha$.
\end{claim}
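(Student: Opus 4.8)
The plan is to show that if $Q\equiv 0$ on $\overline\Ome$, then $u$ coincides with a member of the candidate family $\boC$, and moreover that this member lies in $\boC_\alpha$. Recall that $Q$ vanishing at a point $x$ means precisely that $u$ has a contact of order $k\ge 2$ with the candidate solution $v_{x,\nabla u(x),u(x)}$ at $x$ (this is statement~1 of Theorem~A, already established via the definition of $Q$). So the hypothesis $Q\equiv 0$ says that at \emph{every} $x\in\overline\Ome$, the Hessians of $u$ and of $v_x:=v_{x,\nabla u(x),u(x)}$ agree at $x$, in addition to the values and gradients agreeing by items (b) and (c) of Definition~\ref{Def:Candidate}.

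The first step is to fix a base point $x_0\in\Ome$, set $\Ome_0=\Ome_{x_0,\nabla u(x_0),u(x_0)}$ and $v_0=v_{x_0,\nabla u(x_0),u(x_0)}$, and prove that $u=v_0$ on the connected component of $\Ome\cap\Ome_0$ containing $x_0$. I would consider the set $Z=\{x\in\Ome\cap\Ome_0 \mid u(x)=v_0(x)\text{ and }\nabla u(x)=\nabla v_0(x)\}$, which contains $x_0$ and is closed in $\Ome\cap\Ome_0$ by continuity. To see it is open, take $x\in Z$. By item (d) of Definition~\ref{Def:Candidate}, since $x\in\overline\Ome_0$ we have $v_{x,\nabla v_0(x),v_0(x)}=v_0$; but $\nabla v_0(x)=\nabla u(x)$ and $v_0(x)=u(x)$, so $v_{x,\nabla u(x),u(x)}=v_0$, i.e. $v_x=v_0$ near $x$. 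Now $Q_x=0$ gives $\nabla^2u(x)=\nabla^2v_x(x)=\nabla^2v_0(x)$. Thus $u$ and $v_0$ solve the same second-order elliptic equation $\Delta w+f(w)=0$ and have the same $2$-jet at $x$; a standard argument (Taylor-expand the difference $w=u-v_0$, which satisfies a linear equation $\Delta w + c(x)w=0$ with $c$ bounded since $f$ is Lipschitz, and use that $w$ vanishes to second order at $x$ together with a Harnack/unique-continuation or direct ODE-comparison estimate) shows $w\equiv 0$ in a neighborhood of $x$, hence that neighborhood lies in $Z$. Therefore $Z$ is open and closed, so $Z=\Ome\cap\Ome_0$ (component of $x_0$), and in particular $u=v_0$ there.

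The second step is to upgrade this to $\overline\Ome=\overline\Ome_0$ and hence $u=v_0$ on all of $\overline\Ome$. Since $u>0$ on $\Ome$, $u=0$ on $\partial\Ome$, and likewise for $v_0$ on $\Ome_0$, the equality $u=v_0$ on the common component forces the free boundaries to match: approaching $\partial\Ome$ from inside where $u=v_0>0$, the set $\{v_0>0\}$ cannot extend past $\partial\Ome$ (else $v_0=u\le 0$ there) and $\{u>0\}$ cannot extend past $\partial\Ome_0$, so the component of $\Ome\cap\Ome_0$ containing $x_0$ is all of $\Ome$ and all of $\Ome_0$; by connectedness of $\Ome$ this gives $\Ome=\Ome_0$ and $u=v_0$. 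Consequently $u=v_0\in\boC$. Finally, the Neumann data of $v_0$ along $\partial\Ome_0=\partial\Ome$ equals the Neumann data of $u$, which is $\alpha$; hence $u\in\boC_\alpha$, as claimed.

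I expect the main obstacle to be the \emph{unique-continuation / matching-of-$2$-jets} argument in step one: one must argue rigorously that two solutions of $\Delta w+f(w)=0$ with $f$ merely Lipschitz, agreeing to second order at an interior point, coincide near that point. The cleanest route is to note $w=u-v_0$ satisfies $\Delta w = -(f(u)-f(v_0)) = c(x)w$ with $c\in L^\infty$, and $w$ together with $\nabla w$ vanishes at $x$; then invoke either Aronszajn's unique continuation theorem or, since we in fact know $\nabla^2 w(x)=0$ as well from $Q_x=0$, a soft bootstrap showing $w$ vanishes to infinite order and then unique continuation — the point being that $Q\equiv 0$ hands us the full second-order agreement at every point, so one may instead argue directly that the open set $Z$ defined by vanishing of the $2$-jet is open without any delicate continuation, which is the formulation I used above. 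A secondary (but routine) point is verifying that $\overline\Ome_0\subset M$ is positioned so that item (d) applies at every $x$ reached during the continuation, which is automatic since we only ever use item (d) at points $x$ already known to lie in $\overline\Ome_0$.
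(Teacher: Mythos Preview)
Your outline is sound in spirit, but the ``standard argument'' in step one is not actually standard and, as stated, is a genuine gap. Two solutions of $\Delta w+c(x)w=0$ that agree to second order at a single point need \emph{not} coincide nearby: already for $c\equiv 0$ any harmonic polynomial of degree $\ge 3$ is a counterexample. Aronszajn's theorem requires vanishing to \emph{infinite} order, and your ``soft bootstrap to infinite order'' is not carried out. The more serious issue is the sentence ``$Q\equiv 0$ hands us the full second-order agreement at every point'': it hands you agreement of $\nabla^2u(y)$ with $\nabla^2 v_y(y)$, where $v_y=v_{y,\nabla u(y),u(y)}$ is the \emph{moving} candidate, not with the fixed $v_0$. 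Item~(d) identifies $v_y$ with $v_0$ only at points $y$ already in $Z$, so you cannot directly read off that $Z$ is open from $Q\equiv0$.

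The paper closes this gap differently. It fixes $x_0$, sets $\bar v=v_0$, and invokes Bers' theorem for the linear equation satisfied by $w=u-\bar v$: either $w\equiv 0$ near $x_0$, or $w(X)=q(X)+o(\|X\|^k)$ in normal coordinates with $q$ a nonzero homogeneous harmonic polynomial of degree $k\ge 2$. It then splits $Q_x=(\nabla^2u-\nabla^2\bar v)(x)+(\nabla^2\bar v-\nabla^2 v_x)(x)$. The first term is $Hq(X)+o(\|X\|^{k-2})$; the second is controlled by the $C^1$ dependence of the candidate family on $(w,a)$ together with item~(d) (which gives $\bar v=v_{x,\nabla\bar v(x),\bar v(x)}$), yielding $O(|w|+|\nabla w|)=O(\|X\|^{k-1})$. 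Hence $Q_x=Hq(X)+o(\|X\|^{k-2})$, and $Q\equiv 0$ forces $Hq\equiv 0$, so $q\equiv 0$ --- a contradiction.

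Your route can in fact be repaired without Bers, and this is perhaps what your phrase ``direct ODE-comparison'' is reaching for, but you would need to make it explicit: using item~(d) as above, $Q\equiv 0$ gives, for all $x$ near $x_0$ in $\overline\Ome_0$, the identity
\[
\nabla^2w(x)=\nabla^2 v_{x,\nabla u(x),u(x)}(x)-\nabla^2 v_{x,\nabla v_0(x),v_0(x)}(x)=G\big(x,w(x),\nabla w(x)\big),
\]
with $G$ of class $C^1$ and $G(x,0,0)=0$. Thus $|\nabla^2 w|\le C(|w|+|\nabla w|)$ on a neighborhood of $x_0$; a Gronwall argument applied to $|w|^2+|\nabla w|^2$ then gives $w\equiv 0$ there. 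This is the missing step that turns your open--closed argument into a proof, and it is rather different from invoking unique continuation from a single point.
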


\begin{proof}[Proof of Claim~\ref{cl:e}]
Let $x_0\in \Ome$ and write $\bar v=v_{x_0,\nabla u(x_0),u(x_0)}$. Let $X$ be normal
coordinates at $x_0$. At $x_0$, $u$ and $\bar v$ has a contact of order at least $2$.
Moreover
$$
\Delta (u-\bar v)=f(u)-f(\bar v)=g\times(u-\bar v)
$$
where $g=\int_0^1f'(tu+(1-t)\bar v) dt$. So by Bers Theorem \cite{bers}, either
$u=\bar v$ near $0$ or $(u-\bar v)(X)=q(X)+o(\|X\|^k)$ where $q\not\equiv 0$ is a
homogeneous harmonic polynomial of degree $k\ge 2$. Assume we are in this second case. So
we can write
\begin{align*}
Q_x&=\nabla^2 u(x)-\nabla^2 v_{x,\nabla u(x),u(x)}(x)\\
&=(\nabla^2 u(x))-\nabla^2 \bar v(x))+(\nabla^2 \bar v(x)-\nabla^2 v_{x,\nabla u(x),u(x)}(x))\,.
\end{align*}
In coordinates the hessian operator $\nabla^2$ can be written as an operator which is the
Euclidean hessian $H$ up to a $O(\|X\|)$ error term. So the first term in the above
computation is just $H q(X)+o(\|X\|^{k-2})$. For the second term, we notice that $\bar
v=v_{x,\bar v(x),\nabla \bar v(x)}$ and the map $(w,a)\in T_xM\times\R\mapsto\nabla^2
v_{x,w,a}(x)$ is smooth. So the second term can be evaluated by
$$
\nabla^2 \bar v(x)-\nabla^2 v_{x,u(x),\nabla u(x)}(x)=O(|u-\bar v|(x)+\|\nabla u-\nabla
\bar v\|(x))=O(\|X\|^{k-1})
$$
So $Q_x=H q(X)+o(\|X\|^{k-2})$. As $Q$ vanishes around $x_0$ this implies that $q=0$. So
we get a contradiction and $u=\bar v$ near $x_0$ and by connectedness, $u=\bar v$
everywhere and $\Ome=\Ome_{x_0,\nabla u(x_0),u(x_0)}$.
\end{proof}

\begin{remark}
Let us notice that the argument used in Claim \ref{cl:e} can be applied to prove that $Q$ has isolated zeroes of negative index in $\Ome$ but it seems difficult to do the same at $x_0\in\partial\Ome$ since we do not have a result similar to Bers Theorem at a boundary point.
\end{remark}


\section*{Acknowledgments}
\renewcommand{\thesection}{\arabic{section}}
\renewcommand{\theequation}{\thesection.\arabic{equation}}
\setcounter{equation}{0} \setcounter{maintheorem}{0}

The authors would like to thanks A. Savo, D. Ruiz and P. Mira for their comments and interest in this paper. 

The first author, Jos\'{e} M. Espinar, is partially supported by Spanish MEC-FEDER Grant
MTM2013-43970-P; CNPq-Brazil Grants 405732/2013-9 and 14/2012 - Universal, Grant
302669/2011-6 - Produtividade; FAPERJ Grant 25/2014 - Jovem Cientista de Nosso Estado. 


\end{document}